\newcommand{\bl}[1]{\color{black}{#1}}
\newtheorem{thm}{Theorem}[section]
\newtheorem{prop}[thm]{Proposition}
\newtheorem{lem}[thm]{Lemma}
\newtheorem{cor}[thm]{Corollary}
\theoremstyle{definition}
\newtheorem{defn}[thm]{Definition}
\theoremstyle{remark}
\newtheorem{remk}[thm]{Remark}
\newtheorem{remks}[thm]{Remarks}
\newtheorem{const}[thm]{Construction}
\newtheorem{exm}[thm]{Example}
\newtheorem{exms}[thm]{Examples}
\newtheorem{notat}[thm]{Notation}
\newtheorem{setup}[thm]{\bf Setup}
\newtheorem{assmp}[thm]{\bf Assumptions}
\numberwithin{equation}{section}
\newcommand{\thmref}{Theorem~\ref}
\newcommand{\propref}{Proposition~\ref}
\newcommand{\corref}{Corollary~\ref}
\newcommand{\defref}{Definition~\ref}
\newcommand{\lemref}{Lemma~\ref}
\newcommand{\remref}{Remark~\ref}
\newcommand{\secref}{Section~\ref}
\newcommand{\setref}{Setup~\ref}
\newcommand{\sA}{{\mathcal A}}
\newcommand{\sF}{{\mathcal F}}
\newcommand{\sM}{{\mathcal M}}
\newcommand{\sO}{{\mathcal O}}
\newcommand{\sU}{{\mathcal U}}
\newcommand{\sV}{{\mathcal V}}
\newcommand{\sW}{{\mathcal W}}
\newcommand{\sX}{{\mathcal X}}
\newcommand{\sY}{{\mathcal Y}}
\newcommand{\sZ}{{\mathcal Z}}
\newcommand{\C}{{\mathbb C}}
\newcommand{\D}{{\mathrm D}}
\newcommand{\LL}{{\mathrm L}}
\renewcommand{\P}{{\mathbb P}}
\newcommand{\Q}{{\mathbb Q}}
\newcommand{\Z}{{\mathbb Z}}
\newcommand{\fm}{{\mathfrak m}}
\newcommand{\fg}{{\mathfrak g}}
\newcommand{\vir}{{\rm vir}}
\newcommand{\CH}{{\rm CH}}
\newcommand{\surj}{\twoheadrightarrow}
\newcommand{\inj}{\hookrightarrow}
\newcommand{\codim}{{\rm codim}}
\newcommand{\rank}{{\rm rank}}
\newcommand{\Spec}{{\rm Spec \,}}
\newcommand{\Coh}{{\rm Coh}}
\newcommand{\QCoh}{{\rm QCoh}}
\newcommand{\GG}{{\rm G}}
\newcommand{\KK}{{\rm K}}
\newcommand{\CA}{{\rm A}}
\newcommand{\Sch}{{\operatorname{\mathbf{Sch}}}}
\newcommand{\Sm}{{\mathbf{Sm}}}
\newcommand{\DM}{{\operatorname{\mathcal{DM}}}}
\newcommand{\BG}{{\operatorname{\rm{BG}}}}
\newcommand{\BGH}{{\operatorname{\rm{B( G \times H)}}}}
\newcommand{\BH}{{\operatorname{\rm{BH}}}}
\newcommand{\ds}{{/\kern-3pt/}}
\newcommand{\res}{{\operatorname{res}}}
\renewcommand{\dim}{\text{\rm dim}}
\newcommand{\tuborg}{\left\{\begin{array}{ll}}
\newcommand{\sluttuborg}{\end{array}\right.}
\newcounter{elno}
\newcounter{elno-abc}   
\newcounter{elno-abc-prime}   
\begin{document}
\sloppy
    
\title[Equivariant Virtual Classes]
{Virtual Equivariant Grothendieck-Riemann-Roch Formula}
\author{Charanya Ravi}
\address{Universit\"{a}t Regensburg, Universit\"{a}tsstr. 31, 93040 Regensburg, Germany}
\email{charanya.ravi@mathematik.uni-regensburg.de}

\author{Bhamidi Sreedhar}
\address{Korea Institute For Advanced Study, 85 Hoegi-ro,  Seoul, Republic Of Korea}
\email{sreedhar@kias.re.kr}





\begin{abstract}
	For a $G$-scheme $X$ with a given equivariant perfect obstruction theory, 
	we prove a virtual equivariant Grothendieck-Riemann-Roch formula, 
	this is an extension of a result of Fantechi-G\"{o}ttsche (\cite{FG}) to the equivariant context.
	We also prove a virtual non-abelian localization theorem for schemes over $\C$ with proper actions.

\end{abstract} 
\maketitle
\setcounter{tocdepth}{2}{}
\tableofcontents  
{\hypersetup{linkcolor=black} \tableofcontents}

\section{Introduction}\label{sec:Intro}

As several moduli spaces that one encounters in algebraic geometry have a well-defined expected dimension, 
one is interested in constructing a  fundamental class of the expected dimension in its Chow group. 
Interesting numerical invariants like Gromov-Witten invariants and  Donaldson-Thomas invariants
are obtained by integrating certain cohomology classes over such a fundamental class.
This motivated the construction of virtual fundamental classes. 
Given a moduli space with a perfect obstruction theory, such classes were defined for complex analytic spaces by Li and Tian in \cite{LT} 
and in the algebraic sense for Deligne-Mumford stacks by Behrend and Fantechi in \cite{BF}.  

Given a Deligne-Mumford stack $\sX$, Behrend and Fantechi construct an algebraic stack $\mathfrak{C}_{\sX}$ over $\sX$,
called the intrinsic normal cone of $\sX$. The choice of a perfect obstruction theory $E^\bullet$ over $\sX$
is then used to defined a vector bundle stack $\mathfrak{E}$ over $\sX$, which contains $\mathfrak{C}_{\sX}$ as a closed substack.
The virtual fundamental class associated to $E^\bullet$ is a class in the Chow group of $\sX$
obtained by intersecting $\mathfrak{C}_{\sX}$ with the zero section of $\mathfrak{E}$. 
Following a similar construction, Lee in \cite{YPLee} defined the virtual structure sheaf 
in the Grothendieck group of coherent sheaves of $\sX$. 
For schemes, the Riemann-Roch transformation provides a natural isomorphism between $K$-theory of coherent sheaves 
and Chow groups with rational coefficients
and it is natural to ask if there is a relation between the virtual fundamental class and the image of the 
virtual structure sheaf under the Riemann-Roch transformation. 
This question goes back to the definition of the virtual fundamental class for quasi-manifolds by Kontsevich (see \cite[\S~1.42]{KO}), 
where the class is defined via the Riemann-Roch map using the definition of the virtual structure sheaf.
In \cite{FG}, Fantechi and G\"{o}ttsche prove that for a scheme with a perfect obstruction theory, 
the image of the virtual structure sheaf is indeed the virtual fundamental class twisted by the Todd class of the virtual tangent bundle, 
thus verifying the formula proposed by Kontsevich.
A similar result in the context of $dg$-manifolds was obtained previously by Ciocan-Fontanine and Kapranov in \cite{CK}. 

A theory of Chow groups for algebraic stacks has been developed by various authors. 
For Deligne-Mumford stacks this was introduced by Vistoli in \cite{Vistoli}, 
for quotient stacks by Edidin-Graham in \cite{EG} and for algebraic stacks which admit stratifications by quotient stacks, 
this was developed by Kresch in \cite{Kresch}. 
Quotient stacks which are obtained as the quotient of a scheme (or an algebraic space) 
with the action of linear algebraic group
turn out to be some of the most well understood algebraic stacks 
as one can approach questions related to them as equivalent problems in equivariant geometry. 
In this text, we study the virtual classes associated to equivariant perfect (relative) obstruction theories, 
that is  perfect obstruction theory associated to a morphism of stacks $[X/G] \to [Y/G]$, 
where the the map is induced by a $G$-equivariant morphism between $G$-schemes $X \to Y$.
In \cite{EG1}, Edidin-Graham define an equivariant Riemann-Roch transformation $\tau^G_{-}$ 
from equivariant $K$-theory $\GG_0(G,-)_{\Q}$ 
to the equivariant Chow groups $\CH_*^G (-)_{\Q}$. 
We show that for a  scheme with group action and a given equivariant perfect obstruction theory, 
the virtual structure sheaf and virtual fundamental class satisfy Kontsevich's formula. 
More precisely, we prove the  following:
\begin{thm}({see \thmref{thm:main}})\label{intro: thm:main}
	Let $X, Y \in \Sch^G_k$ and $\tilde{f}: X \to Y$ be a $G$-equivariant morphism such that $Y$ is smooth, 
	$G$-equivariantly connected and pure dimensional. 
	Suppose that there exists a $G$-equivariant closed immersion $X \inj M$ for some $M \in \Sm_k^G$ 
	such that the assumptions in \ref{assmp} are satisfied.
	Then for an equivariant perfect relative obstruction theory $E^\bullet \to \LL_{X/Y}$ on $X$ with respect to $Y$ which admits a global resolution:
	\begin{equation}
	\tau^G_{X} (\sO^\vir_{[G, X, E^\bullet]}) = Td^G(T^\vir_{[X/Y, E^\bullet]}) \cap [G, X^\vir, E^\bullet]
	\end{equation}
	in $\CH_*^G(X)_\Q$.
\end{thm}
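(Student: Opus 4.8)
The plan is to reduce the equivariant statement to the non-equivariant theorem of Fantechi--G\"{o}ttsche (\cite{FG}) by means of the finite-dimensional approximation of equivariant Chow groups and $K$-theory used by Edidin--Graham. Fix an integer $i$. Following \cite{EG}, \cite{EG1}, choose a finite-dimensional representation $V$ of $G$ together with a $G$-invariant open subset $U \subseteq V$ on which $G$ acts freely and whose complement has codimension large compared with $\dim X - i$. Forming the mixed spaces $X_G := X \times^G U$, $Y_G := Y \times^G U$ and $M_G := M \times^G U$, the quotient maps $X \times U \to X_G$ etc.\ are $G$-torsors, hence smooth; moreover $M_G$ is smooth, $X_G \inj M_G$ remains a closed immersion, and $\tilde f$ induces $f_G : X_G \to Y_G$. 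By construction $\CH^G_i(X)_\Q \cong \CH_{i + \dim U - \dim G}(X_G)_\Q$ in this range and the equivariant $K$-theory class maps to $G_0(X_G)_\Q$; since the whole asserted identity lives in these groups, it suffices to prove the corresponding identity on each $X_G$ and pass to the limit. Routing through the approximation in this way lets me invoke \cite{FG} as a black box rather than re-proving equivariant Gysin compatibilities by hand.

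The first task is to descend the data. Because $q : X\times U \to X_G$ is a smooth $G$-torsor, one has $q^{*}\LL_{X_G/Y_G}\simeq p^{*}\LL_{X/Y}$, where $p : X\times U \to X$ is the projection and the smooth $U$-directions cancel between $X_G$ and $Y_G$. Consequently the equivariant global resolution $[E^{-1} \to E^0]$ descends to a two-term complex of vector bundles $E^\bullet_G$ on $X_G$, and the equivariant conditions defining a perfect relative obstruction theory (isomorphism on $h^0$, surjection on $h^{-1}$) descend to make $E^\bullet_G \to \LL_{X_G/Y_G}$ a perfect relative obstruction theory. The existence of the global resolution is exactly what makes this explicit and renders the subsequent cone construction functorial under the mixing.

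Next I would match the four ingredients under the approximation. The intrinsic normal cone, the vector bundle stack, and the ensuing constructions of Behrend--Fantechi and Lee are compatible with flat (here smooth-torsor) pullback, so the equivariant virtual structure sheaf $\sO^\vir_{[G,X,E^\bullet]}$ and virtual fundamental class $[G,X^\vir,E^\bullet]$ realize, on each approximation, as the ordinary $\sO^\vir_{X_G}$ and $[X_G^\vir]$ attached to $E^\bullet_G$; likewise $Td^G(T^\vir_{[X/Y,E^\bullet]})$ realizes as $Td(T^\vir_{X_G/Y_G})$. Applying the theorem of Fantechi--G\"{o}ttsche to the scheme $X_G$ gives
\begin{equation}
\tau_{X_G}(\sO^\vir_{X_G}) = Td(T^\vir_{X_G}) \cap [X_G^\vir],
\end{equation}
where $T^\vir_{X_G}$ is the absolute virtual tangent bundle. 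The desired formula then follows once this absolute statement is reconciled with the relative form in the theorem and with the precise definition of $\tau^G_X$.

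That reconciliation is where I expect the real work to lie. The Edidin--Graham transformation $\tau^G_X$ is not simply the non-equivariant $\tau_{X_G}$ read off each approximation: its definition carries a correction by the Todd class of the representation bundle $X \times^G V \to X_G$, tuned precisely so that the resulting classes are independent of the choice of $(V,U)$ and assemble in the limit. Writing $T^\vir_{X_G} = T^\vir_{X_G/Y_G} + f_G^{*} T_{Y_G}$ and expanding $Td(T^\vir_{X_G}) = Td(T^\vir_{X_G/Y_G})\, Td(f_G^{*} T_{Y_G})$, the key point is to show that the factor $Td(f_G^{*} T_{Y_G})$ and the approximation-bundle correction hidden in $\tau^G_X$ cancel --- both originating from the tangent directions of the base common to $X_G \to Y_G$ and to $Y_G$ --- leaving exactly the relative Todd class $Td^G(T^\vir_{[X/Y,E^\bullet]})$. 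Here I would lean on the compatibility of $\tau^G$ with the projection formula and with Gysin pullback, so that smoothness of $Y$ allows the relative obstruction theory to be traded for the absolute one cleanly, and on the pure-dimensionality of $Y$ to keep the degree bookkeeping and the base contributions uniform across all approximations. Once these cancellations are verified on every $X_G$, taking the inverse limit over $(V,U)$ yields the identity in $\CH_*^G(X)_\Q$.
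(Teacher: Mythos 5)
Your proposal follows essentially the same route as the paper's proof of \thmref{thm:main}: pass to the mixed spaces of a good pair $(V,U)$, descend the equivariant obstruction theory, verify that the virtual structure sheaf and virtual fundamental class are compatible with the smooth pullback $s_U$ (this is \propref{prop: pullbackvirtualclass}, which the paper proves via Qu's and Manolache's virtual pullbacks), apply Fantechi--G\"{o}ttsche on $X_G = X\times^G U$, and reconcile the Todd factors with the Edidin--Graham correction. The compatibilities you assert without proof --- flat-pullback invariance of the cone constructions, and the passage from the relative obstruction theory over $Y_G$ to an absolute one on $X_G$ having the same virtual classes --- are genuine lemmas (\lemref{lem:inducedpotgeneralsetup}, \lemref{lem:equivalenceunderinduced} and \propref{prop: pullbackvirtualclass} in the paper), but citing them as known is reasonable in an outline.

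The genuine problem is in the step you yourself flag as the crux: the claimed cancellation is false, and carrying it out would yield a wrong formula. The correction hidden in $\tau^G_X$ is division by $Td(X\times^G(U\times V))$, the Todd class of a bundle with fiber $V$; it originates from the tangent directions of the approximation space $U$, not from the base $Y$. Meanwhile $T_{Y_G}$ descends from the equivariant class $[\tilde f^*T_Y]+[V]-[\fg_Y]$ (tangent of $Y$, plus $T_U\cong U\times V$, minus the vertical directions of the torsor $Y\times U\to Y_G$). Hence $Td(f_G^*T_{Y_G})$ does \emph{not} cancel against the correction: only its $Td^G(V)$-factor does, and the surviving factor $Td^G(\tilde f^*T_Y)\cdot Td^G(\fg_X)^{-1}$ must combine with $Td(T^\vir_{X_G/Y_G})$, which descends from $Td^G(E^{\bullet\vee})$, to produce $Td^G(T^\vir_{[X/Y,E^\bullet]})$. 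Recall from \defref{defn:virtualtangentbundle} and Example \ref{exm: eq. virtualtangentbundle} that $T^\vir_{[X/Y,E^\bullet]}=[E^{\bullet\vee}]+\tilde f^*[T_Y]-[\fg_X]$, so the base tangent directions are part of the right-hand side of the theorem, not something that cancels out of it; your sentence conflates the approximation directions ($V$) with the base directions ($T_Y$). If $Td(f_G^*T_{Y_G})$ cancelled in full, you would be left with $Td^G(E^{\bullet\vee})\cap[G,X^\vir,E^\bullet]$, which is not the assertion. The correct bookkeeping, including the Lie-algebra term $Td^G(\fg_X)^{-1}$ that your outline omits entirely, is Step 2 of the paper's proof: $Td(T^\vir_{[X\times^G U,\tilde E^\bullet]})=p_X^*\bigl(Td^G(T^\vir_{[X/Y,E^\bullet]})\bigr)\cdot Td^G(V)$, after which dividing by $Td^G(V)$, as prescribed by the definition of $\tau^G_X$, gives exactly the theorem.
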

Here $\sO^\vir_{[G, X, E^\bullet]}$, $[G, X^\vir, E^\bullet]$ and $T^\vir_{[X/Y, E^\bullet]}$
denote the virtual structure sheaf, the virtual fundamental class and the virtual tangent bundle associated to the
$G$-equivariant perfect relative obstruction theory $E^\bullet$.
This is  an extension of \cite[Proposition 3.4]{FG} to the case of quotient stacks. 
As an immediate consequence of the above result, we prove the following virtual equivariant Grothendieck Riemann-Roch theorem 
which generalizes \cite[Theorem 3.5]{FG}. 

\begin{cor}(see \corref{cor:EVGRR}) \label{cor-GRR,HRR}
	Let $X$ be a $G$-scheme satisfying the assumptions in \ref{assmp} and
	let $E^\bullet \to \LL_{X}$ be an equivariant perfect obstruction theory on $X$ which admits global resolution
	and let $\alpha \in \KK_0(G, X)$.
	\begin{enumerate}
		\item{(Virtual Equivariant Grothendieck-Riemann-Roch).}
		Let  $f: X  \rightarrow Y  \in  \Sch_k^G$ where $f$ is proper and $Y$ is smooth and $G$-quasi-projective.
		Then:
		\begin{equation} 
		ch^G(f_*(\alpha \otimes \sO^\vir_{[G, X, E^\bullet]}) ) ~.~ Td^G(T_Y) \cap [Y] = \\ 
		f_*(ch^G(\alpha) ~.~ Td^G(T_{[X,E^\bullet]}^\vir) \cap [G, X^\vir,E^\bullet])
		\end{equation}
		in $\CH^G_*(X)_\Q$.\\
		\item{(Virtual Equivariant Hirzebruch-Riemann-Roch).} If $X$ is a complete $G$-scheme, then:
		\begin{equation} 
		\chi^G_\vir(X, E^\bullet, \alpha) = \int_{[G,X^\vir, E^\bullet]} ch^G(\alpha) ~.~ Td^G(T^\vir_{[X, E^\bullet]}).
		\end{equation}
	\end{enumerate}
\end{cor}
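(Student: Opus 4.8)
The plan is to deduce both statements formally from the main identity in \thmref{thm:main} together with the standard functorial properties of the Edidin--Graham equivariant Riemann--Roch transformation $\tau^G$ recorded in \cite{EG1}. The three properties I would invoke are: (i) \emph{covariance}, i.e. for a proper $G$-equivariant morphism $f$ one has $\tau^G_Y \circ f_* = f_* \circ \tau^G_X$ on $\GG_0(G,-)_\Q$; (ii) the \emph{module (projection) formula} $\tau^G_X(\alpha \otimes \beta) = ch^G(\alpha) \cap \tau^G_X(\beta)$ for $\alpha \in \KK_0(G,X)$ and $\beta \in \GG_0(G,X)_\Q$; and (iii) the \emph{smooth normalization} $\tau^G_Y(\beta) = ch^G(\beta) \cdot Td^G(T_Y) \cap [Y]$, valid when $Y$ is smooth. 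I would also first observe that applying \thmref{thm:main} in the special case $Y = \Spec k$ collapses the relative obstruction theory to the absolute one, so that $\LL_{X/Y} = \LL_X$ and $T^\vir_{[X/Y,E^\bullet]} = T^\vir_{[X,E^\bullet]}$; since $\Spec k$ is smooth, $G$-equivariantly connected and pure dimensional, this yields the absolute identity $\tau^G_X(\sO^\vir_{[G,X,E^\bullet]}) = Td^G(T^\vir_{[X,E^\bullet]}) \cap [G,X^\vir,E^\bullet]$ that the corollary uses.

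For part (1), I would start from the left-hand side and rewrite it using (iii) for the smooth $G$-quasi-projective target $Y$:
\begin{equation*}
ch^G\big(f_*(\alpha \otimes \sO^\vir_{[G,X,E^\bullet]})\big) \cdot Td^G(T_Y) \cap [Y] = \tau^G_Y\big(f_*(\alpha \otimes \sO^\vir_{[G,X,E^\bullet]})\big).
\end{equation*}
Applying covariance (i) to the proper map $f$ moves $\tau^G$ inside the pushforward, the module formula (ii) pulls out $ch^G(\alpha)$, and the absolute form of \thmref{thm:main} replaces $\tau^G_X(\sO^\vir)$ by $Td^G(T^\vir_{[X,E^\bullet]}) \cap [G,X^\vir,E^\bullet]$. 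Chaining these,
\begin{equation*}
\tau^G_Y\big(f_*(\alpha \otimes \sO^\vir)\big) = f_*\big(ch^G(\alpha) \cap \tau^G_X(\sO^\vir)\big) = f_*\big(ch^G(\alpha) \cdot Td^G(T^\vir_{[X,E^\bullet]}) \cap [G,X^\vir,E^\bullet]\big),
\end{equation*}
which is exactly the right-hand side of the asserted GRR formula.

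For part (2), I would specialize part (1) to $Y = \Spec k$, which is legitimate since a complete $G$-scheme is proper over $k$ and $\Spec k$ is smooth and $G$-quasi-projective. Here $T_Y = 0$, so $Td^G(T_Y) = 1$ and the factor $[Y]$ is the fundamental class of the point; the left-hand side of the GRR formula becomes $ch^G\big(f_*(\alpha \otimes \sO^\vir)\big)$ in $\CH^G_*(\Spec k)_\Q$, which is by definition $\chi^G_\vir(X, E^\bullet, \alpha)$, while the right-hand side is by definition $\int_{[G,X^\vir,E^\bullet]} ch^G(\alpha) \cdot Td^G(T^\vir_{[X,E^\bullet]})$. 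This gives the Hirzebruch formula directly.

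The routine algebra above is immediate; the real work is in securing the three properties of $\tau^G$ in the equivariant, possibly singular setting, and this is where I expect the main obstacle. Covariance and the module structure must hold for the version of equivariant Riemann--Roch available under the running Assumptions~\ref{assmp} (in particular for the singular $X$ carrying $\sO^\vir$), and the smooth normalization (iii) must be available $G$-equivariantly for $Y$. A secondary subtlety I would address carefully is the interpretation over a point: $\CH^G_*(\Spec k)_\Q$ is a completed Chow ring of $BG$, so I would verify that $ch^G$ identifies the equivariant virtual Euler characteristic $\chi^G_\vir$ with the claimed integral in this completion and that the pushforward $f_*$ to the point is compatible with this identification.
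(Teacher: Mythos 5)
Your proposal is correct and follows essentially the same route as the paper: both reduce part (1) to the main theorem (in its absolute form) combined with the covariance and module properties of $\tau^G$ from \cite[Theorem 3.1]{EG1}, and both obtain part (2) by specializing part (1) to the structure map $X \to \Spec k$. The one step you flag as an obstacle --- the smooth normalization $\tau^G_Y(\beta) = ch^G(\beta) \cdot Td^G(T_Y) \cap [Y]$ --- is exactly what the paper supplies in three lines, deriving it from the module property, the isomorphism $\KK_0(G,Y) \xrightarrow{\simeq} \GG_0(G,Y)$ for $Y \in \Sm^G_k$ (Thomason), and \cite[Theorem 3.1(d)(ii)]{EG1} applied to $Y \to \Spec k$, which is precisely where the $G$-quasi-projectivity hypothesis on $Y$ is used.
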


There are several interesting examples of perfect obstruction theories on schemes which are equivariant with respect to an underlying group action.
For a smooth complex projective toric threefold $Y$ with its natural split torus action, it has been recently shown in the work 
of Ricolfi that the natural perfect obstruction theories on the Hilbert scheme $\mathrm{Hilb}^n(Y)$ of $n$-points on $Y$, and the 
Quot scheme of length $n$ quotients of a torus equivariant exceptional locally free sheaf on $Y$  
are all in fact equivariant with respect to the torus action (see \cite[Theorem B, Example 4.6]{Ri20}). 
On a smooth projective complex curve $X$, the moduli stack of $G$-local systems on $X$ for a complex reductive group $G$
is a quotient stack with an equivariant perfect obstruction theory (see e.g. \cite{BN16}).

The next goal of this paper is to study the relation between the virtual classes of a quotient stack and its inertia stack.
The equivariant Riemann-Roch map fails to be an isomorphism even in simple cases of trivial actions.
For example, for $X= \Spec \C$ with a trivial $G = \Z/n\Z$ action, while $\GG_0(G,X)_{\Q}$ is an $n$-dimensional $\Q$-vector space 
$\CH_*^G (X)_{\Q}$ is a $1$-dimensional $\Q$-vector space.
However this anomaly was rectified by Edidin-Graham in \cite{EG2} for separated quotient Deligne-Mumford stacks over $\C$ by 
taking complex coefficients and by 
constructing a Riemann-Roch transformation which 
maps to the Chow group of the inertia.
For a $G$-scheme $X$ with finite stabilizers, let $$I\tau_X^G: \GG_0(G,X)_{\C} \to \CH_*^G(I_X)_{\C},$$ denote this map,
where $I_X$ denotes the inertia scheme of $X$. 
The map $I\tau^G_X$ factors through the geometric part of the $K$-theory of the inertia scheme, 
i.e., $$I\tau^G_X: \GG_0(G,X)_{\C} \xrightarrow{\vartheta^G_X} (\GG_0(G,I_X)_{\C})_{\fm_1} \xrightarrow{\tau^G_{I_X}} \CH_*^G(I_X)_\C,$$
where  the map  $\vartheta^G_X$ is the Atiyah-Segal map and $\fm_1$ is the augmentation ideal.  
For finite group actions the Atiyah-Segal map was introduced by Vistoli in \cite{V} 
and studied by Edidin-Graham in \cite{EG2} and \cite{EG4} in the construction of their
 Riemann-Roch transformation to the Chow group of the inertia. 
  In \cite{KS20}, whose terminology
 is followed in this paper to describe this map,
 the second author and Krishna  extend 
 the Atiyah-Segal map to higher $K$-theory  for quotient Deligne-Mumford stacks over $\C$ and prove that it is an isomorphism.
 In Section \ref{Sec:Atiyah-Segal}, we describe the image of the virtual structure sheaf under the Atiyah-Segal map.
 And as a corollary, we give an explicit formula for the virtual structure sheaf under the Riemann-Roch transformation to inertia
 in terms of Chern characters and Todd classes. This can be seen as a virtually smooth analog of \cite[Theorem 6.5(c)]{EG4}. 
 Note that in Theorem \ref{thm-AS} and Corollary \ref{cor-AS}, all $K$-groups and Chow groups are taken with complex coefficients.

\begin{thm}({ see \thmref{thm:Virtual-sh-AS}}) \label{thm-AS}
	Let $X \in \Sch^G_\C$ such that $G$ acts properly on $X$. Suppose that there exists a $G$-equivariant closed immersion $X \inj M$ for some $M \in \Sm_\C^G$. Let $E^\bullet \to \LL_{X/k}$ be a $G$-equivariant perfect obstruction theory which admits a global resolution and let 
	$\tilde{E}^\bullet \to \LL_{I_X/k}$ be the induced $G$-equivariant perfect obstruction theory  on $I_X$ (see \lemref{lem:GPInertia}). 
		Then under the Atiyah-Segal isomorphism
		$\vartheta^G_X: \GG_0(G,X) \to \GG_0(G,I_X)_{\fm_1}$, we have the following:
		$$ \vartheta^G_X(\sO^\vir_{[G, X, E^\bullet]}) = 
		\underset{\psi \in \Sigma^G_X} \sum \frac{\sO^\vir_{[G,I^\psi_X, \tilde{E}^\bullet]}}{\Lambda_{-1}([N_{[i^\psi, E^\bullet]}^{\vir^\vee}])}.
		$$
	\end{thm}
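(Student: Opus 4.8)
The plan is to prove this as the Atiyah--Segal, $K$-theoretic refinement of the classical virtual localization formula, following the fixed-versus-moving decomposition of the obstruction theory in the style of Graber--Pandharipande but carried out in the localized equivariant $K$-theory of \cite{KS20}. First I would fix the construction of the virtual structure sheaf: with the global resolution $E^\bullet = [E^{-1} \to E^0]$ one sets $E_1 = (E^{-1})^\vee$, $E_0 = (E^0)^\vee$, embeds the intrinsic normal cone into the vector bundle stack $[E_1/E_0]$, and pulls it back to a $G$-invariant cone $\mathfrak{c} \subset E_1$, so that $\sO^\vir_{[G,X,E^\bullet]} = 0^!_{E_1}[\sO_{\mathfrak{c}}]$ is the $K$-theoretic Gysin pullback along the zero section $0 \colon X \to E_1$ (the smooth ambient $M$ serving to exhibit $\mathfrak{c}$). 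By \lemref{lem:GPInertia}, restricting $E^\bullet$ to a component $I^\psi_X$ and splitting it into its $\psi$-fixed and $\psi$-moving parts produces the induced obstruction theory $\tilde E^\bullet$ from the fixed part, with the same Gysin description of $\sO^\vir_{[G,I^\psi_X,\tilde E^\bullet]}$, while the moving part is by definition the dual virtual normal bundle $N_{[i^\psi,E^\bullet]}^{\vir^\vee}$.

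The heart of the argument is to push the map $\vartheta^G_X$ through the Gysin operation defining the virtual class. I would first record, from \cite{KS20}, that $\vartheta^G$ is compatible with flat pullback along $\pi \colon E_1 \to X$ and with proper pushforward, and then prove the localized self-intersection formula for the zero section: on the inertia, $\vartheta^G$ fails to commute with $0^!_{E_1}$ exactly by division by $\Lambda_{-1}$ of the dual of the moving part of $E_1$. This is the $K$-theoretic incarnation of Thomason's concentration theorem in the completion at $\fm_1$, and it is the mechanism that creates the denominators; its non-virtual prototype is \cite[Theorem 6.5(c)]{EG4}, which for a smooth space computes $\vartheta^G$ of the structure sheaf as a sum over inertia components divided by $\Lambda_{-1}$ of the conormal bundle. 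Applying this analysis to the cone $\mathfrak{c}$ lets me transport the inertia decomposition through the cone and the Gysin map onto the components $I^\psi_X$.

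I would then reassemble the factors componentwise. Over $I^\psi_X$ every bundle in the construction splits into $\psi$-fixed and $\psi$-moving parts: the fixed part of $E_1$ together with the induced cone reproduces precisely the Gysin presentation of $\sO^\vir_{[G,I^\psi_X,\tilde E^\bullet]}$, whereas the moving directions of $E_1$ (from the zero-section defect) and of $E_0$ (from the $[E_1/E_0]$ quotient presentation of the vector bundle stack) combine as $\Lambda_{-1}((E_0^{\mathrm m})^\vee)/\Lambda_{-1}((E_1^{\mathrm m})^\vee) = \Lambda_{-1}(N_{[i^\psi,E^\bullet]}^{\vir^\vee})$. Notably the smooth ambient $M$ drops out, since the virtual normal bundle depends only on the two-term complex $E^\bullet$. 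Summing over $\psi \in \Sigma^G_X$ then yields the asserted identity in $\GG_0(G,I_X)_{\fm_1}$.

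The step I expect to be the main obstacle is the localized self-intersection formula of the second paragraph, together with its compatibility with the induced obstruction theory of \lemref{lem:GPInertia}: one must check that the $\psi$-fixed part of $E^\bullet$ is genuinely the two-term complex underlying $\tilde E^\bullet$, and that the $\psi$-moving parts of $E_0$ and $E_1$ carry no inertia-fixed vectors, so that both $\Lambda_{-1}((E_0^{\mathrm m})^\vee)$ and $\Lambda_{-1}((E_1^{\mathrm m})^\vee)$ become invertible after localizing at $\fm_1$ and the quotient $\Lambda_{-1}(N_{[i^\psi,E^\bullet]}^{\vir^\vee})^{-1}$ is well defined. This invertibility is exactly where the hypothesis that $G$ acts properly (forcing finite stabilizers, hence no nonzero fixed moving vectors) enters, and verifying it uniformly across all components is the technical crux of the proof.
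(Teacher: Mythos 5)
You have identified the right mechanism---the Graber--Pandharipande fixed/moving splitting of $E^\bullet$, with $\Lambda_{-1}$ of the virtual conormal as the correction factor and its invertibility after localization as the crux---but your route has genuine gaps. The main one is that you never engage with how $\vartheta^G_X$ is actually defined: it is not a transformation living on the inertia scheme to which one can apply a concentration theorem, but the composite $\mu^g_* \circ t_g^{-1} \circ (i^g_!)^{-1}$ factoring through $\GG_0(Z_g, X^g)_{\fm_g}$ for a chosen $g \in \psi$ (see \eqref{eqn:Atiyahsegal}). The compatibilities you want to invoke---commutation of $\vartheta^G$ with flat pullback, proper pushforward, and your ``localized self-intersection formula'' for $0^!_{E_1}$---are not off-the-shelf statements for this composite; proving them forces you to unwind the definition through the fixed loci with their centralizer actions, at which point you are carrying out the paper's proof. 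The paper's argument is precisely such an unwinding: it composes Lemma \ref{lem:restriction-vir-sh} ($\res^G_{Z_g}$ takes $\sO^\vir_{[G,X,E^\bullet]}$ to $\sO^\vir_{[Z_g,X,E^\bullet]}$, proved via Qu's virtual pullbacks), Lemma \ref{lem:vir-localization} (the $K$-theoretic virtual localization for $i^g: X^g \inj X$, whose proof is that of \cite[Theorem 3.3]{QU} once one knows $i^g_*$ is an isomorphism after localizing at $\fm_g$ by \cite[Theorem 5.1]{EG4}---this is exactly the concentration input you were planning to re-derive from scratch), and \propref{prop:Moritainducedperfectobstructiontheory} (Morita equivalence respects virtual structure sheaves---which is needed even to make sense of $\tilde{E}^\bullet$ on $I^\psi_X$, since \lemref{lem:GPInertia} defines it by transporting $F^\bullet_g$ from $[X^g/Z_g]$, not by ``restricting $E^\bullet$ to $I^\psi_X$'').

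Second, the twisting operator $t_g$ is entirely absent from your proposal, yet it is built into both the map and the statement: the denominator $\Lambda_{-1}([N_{[i^\psi, E^\bullet]}^{\vir^\vee}])$ is by definition $\mu^g_*(t_{g^{-1}}(\Lambda_{-1}([N_{[i^g, E^\bullet]}^{\vir^\vee}])))$. The paper needs the observation (step $=^5$ of its computation) that $t_g^{-1}$ acts trivially on $\sO^\vir_{[Z_g,X^g,F^\bullet_g]}$ because $F^\bullet_g$ is the $\langle g\rangle$-fixed part, so the twist hits only the conormal factor; without tracking this, the formula you assemble cannot match the one asserted. Relatedly, your account of invertibility is off: invertibility of $\cap\,\Lambda_{-1}([N_{[i^g, E^\bullet]}^{\vir^\vee}])$ is established on $\GG_0(Z_g,X^g)$ after localizing at $\fm_g$ (by the argument of \cite[Theorem 3.3(b)]{EG2}, because the moving part has only non-trivial characters under the $\langle g\rangle$-action---which holds by definition of ``moving,'' not because the action is proper), and it becomes invertibility at $\fm_1$ only after applying the twist. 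Properness enters elsewhere: it gives $g$ finite order (so the eigensheaf decomposition exists), finiteness of $\Sigma^G_X$, and the localization theorem of \cite[Theorem 5.1]{EG4}.
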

Here $N_{[i^\psi, E^\bullet]}^{\vir^\vee}$ denotes the virtual conormal bundle of the inertia (see Section \ref{sec:AS-Virtual sheaf}).
The above theorem is closely related to the virtual localization theorems for  torus actions for Chow groups due to Graber-Pandharipande in \cite{GP99}
and for $K$-theory due to Qu in \cite{QU}.
The perfect obstruction theory induced on the inertia scheme is obtained using the proof of \cite[Lemma 1]{GP99}. 
Combining the above theorem with  \thmref{intro: thm:main} we have the following corollary. 

\begin{cor} (see \corref{cor:Inertia-RR}) \label{cor-AS}
Under the notations of Theorem \ref{thm-AS} further  suppose  assumptions in \ref{assmp} are satisfied. Then we have the following equality in $\CH_*^G(I_X)$:
	$$
	I\tau^G_X (\sO^\vir_{[G, X, E^\bullet]}) = \underset{\psi \in \Sigma^G_X} \sum ch^G(\Lambda_{-1}([N_{[i^\psi, E^\bullet]}^{\vir^\vee}])^{-1}
	Td^G(T^\vir_{[I^\psi_X, \tilde{E}^\bullet]}) \cap  [G,I_X^{\psi^\vir}, \tilde{E}^\bullet].
	$$
\end{cor}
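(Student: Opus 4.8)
The plan is to derive \corref{cor:Inertia-RR} by composing the two results already in hand: the Atiyah--Segal formula of \thmref{thm-AS} and the virtual equivariant Grothendieck--Riemann--Roch identity of \thmref{thm:main}. Recall the factorization $I\tau^G_X = \tau^G_{I_X}\circ\vartheta^G_X$, where $\vartheta^G_X$ lands in the geometric part $(\GG_0(G,I_X)_\C)_{\fm_1}$ and $\tau^G_{I_X}$ is the equivariant Riemann--Roch transformation on the inertia, extended to this localization. First I would apply $\vartheta^G_X$ to the virtual structure sheaf and invoke \thmref{thm-AS}, which expresses $\vartheta^G_X(\sO^\vir_{[G,X,E^\bullet]})$ as the sum over $\psi \in \Sigma^G_X$ of the classes $\sO^\vir_{[G,I^\psi_X,\tilde{E}^\bullet]}/\Lambda_{-1}([N_{[i^\psi,E^\bullet]}^{\vir^\vee}])$. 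Applying $I\tau^G_X$ thus reduces to computing $\tau^G_{I_X}$ on each of these summands.

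Next I would use the decomposition $I_X = \coprod_{\psi}I^\psi_X$, under which $\CH_*^G(I_X)_\C = \bigoplus_\psi \CH_*^G(I^\psi_X)_\C$ and $\tau^G_{I_X}$ is computed componentwise by $\tau^G_{I^\psi_X}$. On the component $I^\psi_X$ the class $\Lambda_{-1}([N_{[i^\psi,E^\bullet]}^{\vir^\vee}])$ is invertible in the localized $K$-group, with inverse a genuine class in $\KK_0(G,I^\psi_X)_{\fm_1}$; so the module (projection-formula) property of the Riemann--Roch transformation over the Chern character yields
$$
\tau^G_{I^\psi_X}\!\left(\frac{\sO^\vir_{[G,I^\psi_X,\tilde{E}^\bullet]}}{\Lambda_{-1}([N_{[i^\psi,E^\bullet]}^{\vir^\vee}])}\right)
= ch^G\!\left(\Lambda_{-1}([N_{[i^\psi,E^\bullet]}^{\vir^\vee}])^{-1}\right)\cap \tau^G_{I^\psi_X}\!\left(\sO^\vir_{[G,I^\psi_X,\tilde{E}^\bullet]}\right).
$$
Finally I would evaluate the remaining factor by applying \thmref{thm:main} to $I^\psi_X$ with the induced perfect obstruction theory $\tilde{E}^\bullet \to \LL_{I^\psi_X}$ from \lemref{lem:GPInertia} (with base a point), obtaining $\tau^G_{I^\psi_X}(\sO^\vir_{[G,I^\psi_X,\tilde{E}^\bullet]}) = Td^G(T^\vir_{[I^\psi_X,\tilde{E}^\bullet]})\cap [G,I_X^{\psi^\vir},\tilde{E}^\bullet]$. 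Summing over $\psi \in \Sigma^G_X$ then gives exactly the claimed identity.

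The main obstacle is the verification that \thmref{thm:main} genuinely applies to each inertia component, which is precisely what forces the extra hypothesis that the assumptions in \ref{assmp} hold. Concretely, one must confirm that every $I^\psi_X$ admits a $G$-equivariant closed immersion into a smooth $G$-scheme and that the induced theory $\tilde{E}^\bullet$ still admits a global resolution, so that $\sO^\vir_{[G,I^\psi_X,\tilde{E}^\bullet]}$, $[G,I_X^{\psi^\vir},\tilde{E}^\bullet]$ and $T^\vir_{[I^\psi_X,\tilde{E}^\bullet]}$ are all defined and satisfy \ref{assmp}; the embedding and resolution should be inherited from those of $X$ through \lemref{lem:GPInertia}. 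A secondary point needing care is that the $K^0$-module structure underlying the projection formula remains valid after localizing at $\fm_1$, so that extracting $ch^G$ of the inverse $\Lambda_{-1}$-class is legitimate in $(\GG_0(G,I^\psi_X)_\C)_{\fm_1}$ rather than only integrally.
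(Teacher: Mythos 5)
Your proposal is correct and follows essentially the same route as the paper: the paper's proof likewise composes Theorem \ref{thm:Virtual-sh-AS} with Theorem \ref{thm:main} applied componentwise on $I_X = \coprod_\psi I^\psi_X$, using the module property $\tau^G_{I^\psi_X}(\epsilon\alpha) = ch^G(\epsilon)\,\tau^G_{I^\psi_X}(\alpha)$ of the equivariant Riemann--Roch transformation (\cite[Theorem 3.1(c)]{EG1}) to extract the factor $ch^G(\Lambda_{-1}([N_{[i^\psi,E^\bullet]}^{\vir^\vee}])^{-1})$. Your additional verifications (that each $I^\psi_X$ inherits the embedding, resolution, and \ref{assmp} hypotheses, and that the projection formula survives localization at $\fm_1$) are exactly the points the paper leaves implicit.
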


We briefly describe some easy consequences of these results. Let $X$ be a scheme with a perfect obstruction theory $\phi: E^\bullet \to \LL_{X/k}$. 
In \cite{Si}, Siebert shows that the virtual fundamental class in the Chow group is independent of the map $\phi$ and only depends on $X$ 
and the class of $E^\bullet$ in $\KK_0(X)$. As noted in \cite{ NMR20}, for a scheme with $G$-action and an equivariant perfect obstruction theory, 
Siebert's proof goes through and one has a similar statement for the equivariant virtual fundamental class. 
On the other hand in  \cite{Th18}, Thomas defines a $K$-theoretic analogue of the Fulton class and
shows that the virtual structure sheaf is independent of $\phi$ 
and only depends on the class of $E^\bullet$ by showing that the virtual structure sheaf can be calculated in terms of the $K$-theoretic Fulton class.
Note that in  \corref{cor-AS} the terms $ch^G(\Lambda_{-1}([N_{[i^\psi, E^\bullet]}^{\vir^\vee}])$ and $	Td^G(T^\vir_{[I^\psi_X, \tilde{E}^\bullet]})$ 
are independent of the map $\phi$ and only depend on $[E^\bullet]$ in $\KK_0(G,X)$ (see \secref{Sec:Atiyah-Segal} for the definitions). 
Now  as  $I\tau_X$ is an isomorphism, it follows from  the equivariant form of Siebert's result for equivariant Chow groups 
and \corref{cor-AS} that the virtual structure sheaf $[\sO^\vir_{[G, X, E^\bullet]}]$ in $\GG_0(G, X )_{\C}$ 
is also independent of the map $\phi$.

 For a complete $G$-scheme $X$ with proper action satisfying the assumptions of \corref{cor-AS},
we can combine the results of Theorem \ref{thm-AS} and Corollary \ref{cor-GRR,HRR} (2) applied to the Riemann-Roch transformation 
for the inertia stack to obtain the following form of the virtual Kawasaki-Riemann-Roch formula:
$$
\chi^G_\vir(X, E^\bullet, \alpha) = \int_{[G,I_X^\vir, \tilde{E}^\bullet]} ch^G\left(\frac{\alpha_{I_X}}{\Lambda_{-1}([N_{[i^\psi, E^\bullet]}^{\vir^\vee}]}\right) ~.~ Td^G(T^\vir_{[I_X, \tilde{E}^\bullet]}),
$$
for a choice of an equivariant perfect obstruction theory  $E^\bullet \to \LL_{X}$ on $X$ which admits global resolution
and for any $\alpha \in \KK_0(G,X)$. Here $\tilde{E}^\bullet \to \LL_{I_X/k}$ is the induced equivariant perfect obstruction theory on $I_X$
and $\alpha_{I_X}$ denotes the twisted image of $\alpha$ in $\KK_0(G,I_X)$
and is defined as $\mu (t^{-1}(i^!(\alpha)))$, where $i^!$, $t$ and $\mu$ denote the pullback on $K$-theory (see \cite[\S~ 5.2]{EG4}),
the twist and the Morita maps, respectively.
A virtual Kawasaki-Riemann-Roch formula in terms of the Euler characteristic was obtained earlier by Tonita in \cite{Tonita},
which has applications in studying the quantum $K$-theory of complex manifolds.

We briefly describe other works in literature that are closely related to the results obtained in this paper. 
The relationship between the virtual structure sheaf and the virtual fundamental class 
has been studied by several authors. 
For torus actions a version of this relationship was established by Thomas in \cite{Thomas}, in the setting of Bredon style homology  theories
this has been established by Joshua in \cite{joshua} and in the generality of quasi-smooth derived algebraic stacks this has been established recently 
by Khan \cite[\S~3.5]{khan} for \'{e}tale motivic Borel–Moore homology theories. 
A virtual Atiyah-Segal type isomorphism has been established using the theory of 
$\theta$-stratifications for quasi-smooth derived stacks by Halpern-Leistner (see \cite{HL}). 
 A $K$-theoretic virtual localization theorem for finite group actions has also been parallelly proved in \cite{Gu20}.

\subsubsection*{\bf Outline of the paper}
In \secref{sec:preliminaries} we first set-up the notations and conventions that would be used in the rest of this text. 
To keep the paper reasonably self contained we recall the definitions of perfect relative obstruction theories and 
virtual classes associated to them. We also recall the construction of the equivariant Riemann-Roch map from \cite{EG}, 
as the precise construction plays an important role in the proof of our main theorems. \secref{sec:VCRR} is devoted to the proof of 
\thmref{thm:main}. In \secref{Sec:Atiyah-Segal} we study the image of the virtual structure sheaf under the Atiyah-Segal map. 
We recall the definition of the Atiyah-Segal map from \cite{EG4} and \cite{KS20} in \secref{sec:Atiyah-Segal-map}.
 In \secref{sec:Morita} we discuss the construction of  the induced perfect obstruction theory on the inertia scheme 
and prove that the Morita isomorphism respects virtual classes. In \secref{sec:AS-Virtual sheaf} 
we study the image of the virtual structure sheaf under various maps in the definition of the Atiyah-Segal map and 
finally prove the main theorem of this section which is \thmref{thm:Virtual-sh-AS}.

\subsubsection*{\bf Acknowledgements}
We thank Richard Thomas for his inputs and suggestions.
The first author would like to thank Adeel Khan for several useful conversations on the subject.
The second author thanks Amalendu Krishna for suggesting him to think about an equivariant analogue of \cite{FG} and Bumsig Kim for discussions. 
C.R. was supported by SFB 1085 Higher Invariants, Universit\"{a}t Regensburg.
B.S. was supported by KIAS Individual Grant MG062803 at Korea Institute for Advanced Study.

\section{Preliminaries}\label{sec:preliminaries}

In this section we recall some preliminaries related to virtual classes and equivariant Chow groups. We first begin by fixing the notations that would be 
used in the rest of this text. 

\subsection{Notations and conventions}\label{subsec:notations}
In this note we assume that our base is a field $k$. Let $\Sch_k$ denote the category of separated schemes of finite type over $k$. $\Sm_k$  
shall denote the full  subcategory of smooth schemes in $\Sch_k$. For $X \in \Sch_k$ we let $\CH_j(X)$ 
denote the $j$th Chow group of $X$. 

A stack shall mean an algebraic stack of finite type over $k$. A Deligne-Mumford stack over $k$ will be abbreviated to a $\DM$-stack. Let $f: \sX \to \sY$ 
be a morphism of stacks, $f$ is called a {\sl $\DM$-type} morphism if for any morphism $S \to \sY$ from an algebraic space $S$, the fiber product 
$S \times_{\sY} \sX$ is a $\DM$-stack. 

A group scheme $G$ will refer to a linear algebraic group over $k$, that is a smooth affine group scheme over $k$. For a group scheme $G$, let $\fg$ 
denote the lie algebra of $G$, which is viewed as a $G$-representation via the adjoint representation of $G$.  
Let $\Sch^G_k$   denote the category of schemes over $k$ 
with  $G$-action. 
For any $X \in \Sch_k^G$ there is an associated quotient algebraic stack $\sX=[X/G]$,
we let $\BG$ denote the classifying stack $[\Spec k/G]$ of $G$. 
For a ($G$-equivariant) coherent sheaf $V$ over $\Spec k$, a finite dimensional $k$-vector space (representation of $G$ over $k$),
by abuse of notation we also use $V$ to denote the corresponding ($G$-equivariant) vector bundle over $\Spec k$.


For a stack $\sX$, let $\QCoh(\sX)$ and $\Coh(\sX)$ denote the categories of quasicoherent $\sO_{\sX}$-modules and 
coherent $\sO_{\sX}$-modules on $\sX$ respectively and
let $\GG(\sX)$ denote the Quillen $K$-theory spectrum of the exact category $\Coh(\sX)$. 
For $X \in \Sch^G_X$,  we let $\Coh^G(X)$ denote the abelian category of $G$-equivariant coherent $\sO_{X}$-modules on $X$
and the $K$-theory spectrum of $\Coh^G(X)$ is denoted by $\GG(G,X)$. 
$\GG_i(\sX)$ and $\GG_i(G,X)$ are defined to be the $i$th homotopy groups of $\GG(\sX)$ and $\GG(G,X)$ respectively, for $i = 0$,
$\GG_0(\sX)$ and $\GG_0(G,X)$ are therefore the Grothendieck groups of the exact categories $\Coh(\sX)$ and $\Coh^G(X)$ respectively.
Any coherent $\sO_{\sX}$-module $\sF \in \Coh(\sX)$ (respectively $\Coh^G(X)$) defines an element of $\GG_0(\sX)$ (respectively $\GG_0(G,X)$) 
which is denoted by $[\sF]$.
For a $G$-scheme $X$, descent along the $G$-torsor $X \to \sX:= [X/G]$ 
induces an equivalence of categories $\Coh^G(X) \xrightarrow{\simeq} \Coh(\sX)$, which induces a homotopy equivalence of spectra 
$\GG(G,X) \xrightarrow{\simeq} \GG(\sX)$. 
$\KK(\sX)$ and $\KK(G,X)$ are defined to be the $K$-theory spectrum of the exact categories of 
locally free $\sO_{\sX}$-modules and $G$-equivariant locally free $\sO_{X}$-modules
respectively which are again homotopy equivalent. And similar to the case of coherent modules the $0$th homotopy groups are the 
Grothendieck groups of locally free sheaves.
There is a natural map of spectra $\KK(\sX) \to \GG(\sX)$ which is not in general an equivalence.

For a $G$-scheme $X$, let $X/G$ denote the geometric quotient scheme of $X$ by $G$ \cite[Definition 0.6]{GIT}, provided it exists.  Let $m: G \times X \to X $
denote the map defining the action of $G$ on $X$. 
Recall that $G$ is said to act freely on $X$ if the map $(m, pr_X): G \times X \to X \times X$  is a closed immersion, where $pr_X$ denotes the projection to $X$.
Suppose that $G$ acts freely on $X$ and the geometric quotient $X/G$ exists as a scheme, then $X \to X/G$ is a $G$-torsor 
(see \cite[Proposition 0.9]{GIT}) and therefore we have
a canonical equivalence of categories between $\Coh(X/G)$ and $\Coh^G(X)$ 
which further induces an isomorphism between the Grothendieck groups and
we denote by $\mathrm{Inv}^G_X$ the natural isomorphism $\GG_0(G,X) \rightarrow \GG_0(X/G)$.

We note that we follow the cohomological notation for complexes, that is for a complex $(E^{\bullet}, d^{\bullet})$, 
we have the differential $d^i: E^i \to E^{i+1}$. 
Let $h^i(E^{\bullet})$ denote the $i^{\text{th}}$-cohomology sheaf of the complex $E^{\bullet}$.  
For a morphism of complexes $\phi: E^{\bullet} \rightarrow F^{\bullet}$, $h^i(\phi)$ 
will denote the induced map on cohomologies $h^i(\phi): h^i(E^{\bullet}) \rightarrow h^i(F^{\bullet})$. 
For any additive category $\sA$, $D(\sA)$ denotes the derived category of $\sA$ and 
$D^b(\sA)$ denotes the derived category of bounded complexes on $\sA$.
For $E^\bullet \in D^b(\Coh(\sX))$ one can naturally define $[E^\bullet]= \Sigma_{i} (-1)^i[ E^i]$  as an element of  $\GG_0(\sX)$ and an element of $\KK_0(\sX)$ if $E^\bullet$ is a complex of locally free sheaves. 

Let $f: \sX \rightarrow \sY$ be a morphism of algebraic stacks then $\LL_{\sX/\sY}$ will denote the relative cotangent complex in $D^b(\Coh(\sX))$, 
for notational convenience this would also be denoted by $\LL_f$. When $\tilde{f}: X \to Y$ is a $G$-equivariant map of $G$-schemes, 
the relative cotangent complex is infact a complex of $G$-equivariant sheaves and $\LL_{X/Y}$ 
or $\LL_{\tilde{f}}$ in this case would refer to this complex as an object of $D^b(\Coh^G(X))$. 
Let $f: \sX := [X/G] \to \sY := [Y/G]$ be the map induced by $\tilde{f}$, then under the equivalence
$D^b(\Coh^G(X)) \simeq D^b(\Coh(\sX))$, the complex $\LL_{X/Y}$ corresponds to $\LL_{\sX/\sY}$.


\subsection{Equivariant Chow groups} 

Equivariant Chow groups were defined by Edidin-Graham in \cite{EG} based on the construction of the  Chow groups of classifying spaces by Totaro 
\cite{Totaro}.  
In this subsection we briefly recall the  definition of equivariant Chow groups and the construction of the equivariant Riemann-Roch transformation, 
for further details see \cite{EG},\cite{EG1}. Some of the definitions are standard and we recall them here primarily to fix notations which would be 
repeatedly used in what follows. 
\subsubsection{Equivariant Chow Groups} \label{Sec:Eq-Chow}
Let $G$ be a linear algebraic group of dimension $g$ and let $X \in \Sch_k^G$ be a scheme with $\dim (X) = n$.
Let $V$ be an $l$-dimensional representation of $G$ over $k$ and let $U$  be a $G$-invariant open subset of $V$ such that $G$ acts freely on $U$. 
The diagonal action on $X \times U$ is also free, so there is a quotient in the category of algebraic spaces $X \times U \to X \times^G U$, 
which in general may not be a scheme. 
The pair $(V,U)$ is called an $l$-dimensional {\sl good pair} for an integer $j$ if $\codim(U,V) > j$.  
\begin{defn}\label{defn:Chow Groups}\cite[Definition-Proposition 1]{EG}
The $i$th {\sl equivariant Chow group} of $X$ is defined as  $\CH_i^G(X) : =  \CH_{i+l-g} (X \times^G U)$, where $(V,U)$ is an $l$-dimensional 
good pair for the integer $n-i$ and $\CH_p$ is the usual Chow group. We write $\CH_*^G(X) = {\underset{-\infty< i \leq n} \prod} \CH_i^G(X)$.
\end{defn}

The definition of equivariant Chow groups in \defref{defn:Chow Groups} is independent of the choice of a good pair $(V,U)$ 
up to unique isomorphisms as shown in \cite[Definition-Proposition 1]{EG}. 
Equivariant Chow groups have the same functoriality as ordinary Chow groups for equivariant proper, flat, smooth, 
regular embedding and l.c.i morphisms.

\begin{remk}
It is shown in \cite[Proposition 16]{EG} that for a $G$-scheme $X$ the groups $\CH_{i}^G(X)$
depend only on the stack $[X/G]$ and not on 
its presentation as a quotient. More precisely, 
suppose $\sZ$ is an algebraic stack which has two presentations $[X/G]$ and $[Y/H]$ as a quotient stack, 
where $X \in \Sch_k^G$ and $Y\in \Sch_k^H$, for group schemes $G$ and $H$. Then there is a natural isomorphism
$\CH_{i+g}^G(X) \simeq \CH_{i+h}^H(Y)$, where $g =\dim(G)$ and $h = \dim(H)$.
\end{remk}

\begin{remk}
Chow groups for more general algebraic stacks have been defined by Kresch in \cite{Kresch}. 
For $\sX$ an algebraic stack, let $\CA_*(\sX)$ denote the Chow groups defined in {\it loc.cit}. In particular, if $\sX= [X/G]$ is in fact a quotient stack it follows from \cite[Theorem 2.1.12]{Kresch}, that there is a natural map $\CH_*^G(X) \to \CA_*(\sX)$ which is a graded isomorphism.  
The groups $\CH_*^G(X)$ are denoted by $\widehat{\CA}_*([X/G])$ and are defined as a limit over the naive Chow groups of vector bundles over the 
stack (see \cite[Definition 2.1.4]{Kresch}). The reason the two definitions coincide is due the fact that the limit stabilizes and further 
vector bundles which are pullbacks from $\BG$ form a cofinal system (see \cite[Remark 2.1.7, Remark 2.1.17]{Kresch}).
\end{remk}

\subsection{Equivariant Riemann-Roch theorem}

We let
$\tau_{(-)}: \GG_0(-)_\Q \to \CH_*(-)_\Q$
denote the Riemann-Roch transformation constructed by Baum-Fulton-Macpherson in \cite{BFM}.
An equivariant analogue of the Riemann-Roch transformation, denoted $\tau^G$, was constructed in \cite{EG1} and for $X \in \Sch_k^G$
it defines a map
\begin{equation}
\tau^G_X : \GG_0(G,X)_\Q \rightarrow \CH_*^G(X)_\Q,
\end{equation}
from equivariant $G$-theory to equivariant Chow groups. In this section we recall briefly the construction of this
map in terms of the Riemann-Roch map of \cite{BFM} as this would be key to our proofs of the equivariant
virtual Riemann-Roch theorems.
  
If $G$ acts freely on a scheme $X$, then there are natural isomorphisms 
$G_0(G,X) \xrightarrow{\simeq} G_0(X/G)$ and $\CH_*^G(X) \xrightarrow{\simeq} \CH_*(X/G)$ and  these isomorphisms identify 
$\tau^G_X$ and $\tau_{X/G}$. Although unlike $\tau_{(-)}$,  $\tau^G_X$ need not be an isomorphism in general.

\subsubsection{Equivariant Riemann-Roch map}\label{para:equivariantRR}
Let $X \in \Sch_k^G$ be of dimension $n$, let $i$ be an integer and let $(V,U)$ be an $l$-dimensional good pair for $n-i$.
Let $j: U \inj V$ denote the open immersion and let $j_X: X \times U \inj X \times V$ denote the open immersion given by base change. 
Let $\bar{p}_X: X \times V \to X$ and $p_U: X \times U \to U$ denote the natural projection maps. 
As $X \times U \times V$ is a $G$-equivariant vector bundle over $X \times U$, 
it descends to define a vector bundle $X \times^G (U \times V)$ over $X \times^{G} U$. 
The Todd class $Td(X \times^G (U \times V))$  of the vector bundle 
$X \times^G (U \times V)$ over $X \times^{G} U$, defines an invertible map from $\CH_*(X \times^G U)_\Q \to \CH_*(X \times^G U)_\Q$.
The equivariant Riemann-Roch map $\tau^G_X: \GG_0(G,X)_\Q \to \CH_*^G(X)_\Q$ 
is determined by  defining its projection to $\CH_i^G(X)_\Q$ for each $i$. 
For a given $i$, the $i$th component of  $\tau^G_X$ is defined by the following commutative diagram: 
\begin{equation}\label{eqn:equivariantRR}
\xymatrix{
                \GG_0(G, X \times V)_\Q \ar[r]^{j_X^*} & \GG_0(G, X \times U)_\Q  \ar[r]_{\mathrm{Inv}^G_{X \times U}}^{\simeq} 
                & \GG_0(X \times^G U)_\Q  \ar[dd]^{\left(\frac{\tau_{X \times^G U}}{Td(X \times^G (U \times V)) }\right)_{i+l-g}} \\\\
                \GG_0(G,X)_\Q \ar[rruu]_{s_U}\ar[uu]^{\bar{p}_X^*}\ar[rr]^{(\tau^G_X)_i} && \CH_{i+l-g}(X \times^G U)_\Q =: \CH^G_{i}(X)_\Q.
}
\end{equation}

The left vertical map $\bar{p}_X^*$ is an isomorphism by homotopy invariance of $G$-theory, $j_X^*$ is the flat pullback, and
the right vertical map is the $(i+l-g)$th projection of the map given by 
the Riemann-Roch map $\tau_{X \times^{G}U}$ multiplied by the inverse of $Td(X \times^G (U \times V)) $. 
Let $s_U: G_0(G,X)_\Q \to G_0(X \times^G U)_\Q$ denote the composition of maps $\mathrm{Inv}^G_{X\times U} \circ p_X^*$,
where $p_X := \bar{p}_X \circ j_X : X \times U \to X$ is the projection map.
Therefore $(\tau^G_X)_i$ is defined as $\left(\frac{\tau_{X \times^G U}}{Td(X \times^G (U \times V)) }\right)_{i+l-g} \circ s_U$, 
and we can define $\tau^G_X$ by making an appropriate choice of good pair $(V,U)$ for each $i$.
And it follows from \cite[Proposition 3.1]{EG1} that this definition of $\tau^G$ is independent of the choices of good pairs.

\subsection{Perfect obstruction theories and virtual classes}
We recall the notion of relative perfect obstruction theory for $\DM$-type morphisms and the virtual classes with respect to them.
The main references are \cite{BF} and \cite{Manolache}.

\begin{defn}\label{defn:obstructiontheory} \cite[Definition 4.4]{BF}\cite[Definition 2.5]{Manolache}
Let $f: \sX \to \sY$ be a $\DM$-type morphism of algebraic stacks. An {\sl obstruction theory} on the stack $\sX$ relative to $\sY$  would consist of the following data:
\begin{enumerate}
	\item A complex $E^{\bullet} \in \D(\QCoh(\sX))$  such that $h^i(E^{\bullet}) = 0$ for all $ i > 0$ and  $h^i(E^{\bullet})$ is coherent for  $i= -1, 0$.
	\item  A map $\phi: E^{\bullet}  \rightarrow {\LL}_{f} $  in $ \D(\QCoh(\sX))$, 
	where ${\LL}_{f}$ denotes the relative cotangent complex of the stack $\sX$ over $\sY$.
	\item  $h^0(\phi)$ is an isomorphism and $h^{-1}(\phi)$ is surjective. 
\end{enumerate}
\end{defn} 


\begin{defn}\cite[Definition 5.1]{BF}\label{defn:perfectobstructiontheory}
An obstruction theory $E^{\bullet}$ is called a {\sl perfect relative obstruction theory} on $\sX$ with respect to $\sY$ if the complex $E^{\bullet}$ 
is a perfect complex of amplitude contained in $[-1,0]$.  The {\sl relative virtual dimension} of $\sX$ relative to $\sY$
with respect to the perfect relative obstruction theory $E^\bullet$
is defined to be the virtual rank of the perfect complex $E^{\bullet}$ (which is in general a $\Z$-valued locally constant function and an integer when 
$\sX$ is connected). 
\end{defn}

\begin{defn} \label{defn:eq. perfectobstructiontheory}
Let $\tilde{f}: X \to Y$ be a $G$-equivariant map between $G$-schemes. An {\sl equivariant perfect relative obstruction theory} on $X$ relative to $Y$  
is a map $\phi: E^{\bullet} \to \LL_{\tilde{f}}$ in $ \D(\QCoh^G(X))$, the derived category of $G$-equivariant quasicoherent sheaves on $X$,
such that forgetting the $G$-action, the underlying non-equivariant map $\phi: E^{\bullet} \to \LL_{\tilde{f}}$ in $ \D(\QCoh(X))$
is a perfect relative obstruction theory in the sense of \defref{defn:perfectobstructiontheory}.
In the case when $Y = \Spec k$ is the base scheme, $\phi: E^{\bullet} \to \LL_{\tilde{f}}$ is called an equivariant perfect obstruction theory 
on the $G$-scheme $X$. An equivariant perfect relative obstruction theory on $X$ relative to $Y$ is equivalent to 
a perfect relative obstruction theory on the quotient stack $\sX := [X/G]$ relative to the quotient stack $\sY := [Y/G]$ for the
schematic and hence $\DM$-type morphism from $\sX$ to $\sY$ induced by $\tilde{f}$ and 
an equivariant perfect obstruction theory on $X$ is the same as a perfect relative obstruction theory on the quotient stack $\sX := [X/G]$
relative to $\BG$. Note that this is therefore different from a perfect obstruction theory on the stack $\sX$ even
when $\sX$ is a $\DM$-stack.
\end{defn}

\begin{remk}
A perfect complex $E^{\bullet} \in \D(\QCoh(\sX))$ of perfect amplitude contained in $[a, b]$
is said to admit a {\sl global resolution} if it is (globally on $\sX$) quasi-isomorphic 
to a complex of the form $F^{\bullet} = F^a \to \cdots \to F^b$,
where each $F^i$ is a locally free sheaf.
If $\sX$ has the resolution property, i.e
every quasicoherent $\sO_{\sX}$-module is a quotient of a direct sum of locally free sheaves,
then every perfect complex on $\sX$ has a global resolution. 
Moreover, if $F^{\bullet}$ as above is a global resolution of $E^{\bullet}$, then the virtual rank of $E^{\bullet}$ is equal to
$\Sigma_{i=a}^b \rank(F^i)$.
\end{remk}

\begin{defn}\label{defn:virtualtangentbundle}
	Let $f: \sX \to \sY$ be a $\DM$-type morphism of algebraic stacks and let $E^\bullet \to \LL_{{\sX}/\sY}$ be a perfect relative obstruction theory on 
	$\sX$ relative to $\sY$. Suppose $E^\bullet$ and $\LL_{{\sY}/\Spec k}$ admit global resolutions. 
	We define the virtual relative tangent bundle of $\sX$ over $\sY$  
	corresponding to $E^\bullet$ to be
	$$T_{[\sX/\sY, E^\bullet]}^\vir : =[E^{\bullet \vee}] + f^*([\LL_{{\sY}/\Spec k}^\vee]) \in \KK_0(\sX).$$  
\end{defn}

\begin{exm} \label{exm: eq. virtualtangentbundle}
Under the notations of \defref{defn:eq. perfectobstructiontheory}, if $E^\bullet$ and $\LL_{Y}$ admit global  resolutions as bounded complexes
of equivariant locally free sheaves, then the equivariant virtual tangent bundle corresponding to $E^\bullet$ is given by
$$T_{[X/Y, E^\bullet]}^\vir = [E^{\bullet \vee}] + f^*[T_Y] - [\fg_X],$$
where $\fg_X$ denotes the pullback of the Lie algebra of $G$ to $X$.

\end{exm}

For the rest of this section let us fix the following notations. See \cite[Definitions 3.6, 3.10]{BF} for detailed constructions of
the intrinsic normal sheaf and the intrinsic normal cone in the absolute case and see \cite[Definitions 2.12, 2.14]{Manolache} for the relative case.
\begin{setup}\label{setup: Recall}
	Let $f: \sX \to \sY$ be a $\DM$-type morphism of algebraic stacks, where $\sY$ is a stack of pure dimension. 
	For a perfect complex $E^\bullet$, let $h^1/h^0(E^{\bullet \vee})$ denote the stack quotient
	$ h^1/h^0(\tau_{[0,1]} (E^{\bullet \vee}))$. 
	Let $\mathfrak{N}_{\sX/\sY} := h^1/h^0(\LL^\vee_{\sX/\sY})$ denote the intrinsic normal sheaf of $\sX$ over $\sY$
and let the closed subcone stack $\mathfrak{C}_{\sX/\sY} \inj \mathfrak{N}_{\sX/\sY}$ denote the intrinsic normal cone of $\sX$ over $\sY$. 
	Given a perfect relative obstruction theory $E^\bullet  \to \LL_{\sX/\sY}$, there is an associated  vector bundle stack $h^1/h^0(E^{\bullet \vee})$ 
	over $\sX$ such that  $\mathfrak{C}_{\sX/\sY} \inj h^1/h^0(E^{\bullet \vee})$ is a closed immersion (see \cite[Proposition 3.11]{Manolache}).
\end{setup}

\subsubsection{Virtual Structure Sheaf}
 Before we get to the definition of the virtual structure sheaf, we recall some facts related to the $\GG$-theory of vector bundle stacks. 

\begin{lem} \label{Lem:Aff-bun}
	If $\psi: \sM \to \sX$ is an affine bundle, then $\psi^*: G(\sX) \to G(\sM)$ is a homotopy equivalence, and
	in particular when $\sM$ is a vector bundle this establishes the homotopy invariance of $\GG$-theory.
\end{lem}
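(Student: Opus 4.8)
The plan is to reduce the statement to the homotopy invariance of $\GG$-theory for vector bundles, treating the general affine (torsor) case as a consequence. First I would use the presentation $\sX = [X/G]$ together with the equivalence $\GG(G,X) \xrightarrow{\simeq} \GG(\sX)$ recalled in \S\ref{subsec:notations}, under which the affine bundle $\psi$ corresponds to a $G$-equivariant affine bundle $\tilde{\psi}\colon \tilde{M} \to X$ of $G$-schemes and $\psi^*$ corresponds to the equivariant flat pullback $\tilde{\psi}^*$. Since $\tilde{\psi}$ is flat, $\tilde{\psi}^*$ is exact on $\Coh^G(X)$ and the induced map on $\GG$-theory is well defined; it therefore suffices to prove that $\tilde{\psi}^*$ is a homotopy equivalence.

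The vector bundle case --- which is exactly the parenthetical homotopy invariance assertion --- is Thomason's equivariant homotopy invariance of $\GG$-theory: for a $G$-equivariant vector bundle $E \to X$ the pullback $\GG(G,X) \to \GG(G,E)$ is a homotopy equivalence, the non-equivariant statement being Quillen's. I would simply invoke this, which disposes of the ``in particular'' clause.

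For a general affine bundle I would reduce to the vector bundle case. An affine bundle is a torsor under a vector bundle $\sV$, and such a torsor is Nisnevich-locally (indeed Zariski-locally, as it is classified by $H^1(\sX,\sV)$ with $\sV$ quasi-coherent) isomorphic to the vector bundle $\sV$ itself. Since $\GG$-theory of quotient stacks satisfies Nisnevich descent, and over each member of a trivializing cover the restriction of $\psi$ is a vector bundle for which pullback is an equivalence by the previous step, a Mayer--Vietoris/descent argument upgrades the local equivalences to the global equivalence $\psi^*\colon \GG(\sX)\to\GG(\sM)$. An alternative, more hands-on route is to realize the torsor $\sM$ as the affine hyperplane $\pi^{-1}(1)$ inside the total space of a rank $n+1$ vector bundle $\sW$ fitting in an extension $0\to\sV\to\sW\xrightarrow{\pi}\sO_\sX\to 0$ whose class is the class of the torsor, and to run the localization sequence for the closed immersion $\sV=\pi^{-1}(0)\hookrightarrow\sW$ with open complement $\sW\setminus\sV\cong\sM\times\G_m$, combining homotopy invariance for $\sV$ and $\sW$ with the computation of $\GG$-theory of a $\G_m$-factor.

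The main obstacle is precisely this passage from vector bundles to torsors: globalizing the local triviality requires the appropriate descent statement for $\GG$-theory of the quotient stack $\sX$ (equivalently, equivariant $\GG$-theory of $X$), and in the localization-sequence approach one must identify the transfer along $\sV\hookrightarrow\sW$ --- whose normal bundle is the trivial line bundle $\sO_\sX$ --- and control the resulting $\G_m$-contribution. By contrast, the reduction to the equivariant setting and the vector-bundle step are formal.
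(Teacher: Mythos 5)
Your primary route collapses at the local-triviality claim: over an algebraic \emph{stack}, a torsor under a vector bundle need not be Zariski- or Nisnevich-locally trivial. The identification of such torsors with $H^1(\sX,\sV)$ is correct, but the inference ``quasi-coherent, hence trivial on a Zariski cover'' is a scheme-level fact (Serre vanishing on affine opens); on a stack with non-linearly-reductive stabilizers the coherent $H^1$ survives restriction to every open substack. Concretely, take $\sX=B\G_a$ and $\sV=\sO_{\sX}$: the non-split extension $0\to \sO\to E\to \sO\to 0$ coming from $H^1(B\G_a,\sO)=H^1_{\mathrm{grp}}(\G_a,k)\neq 0$ yields the affine bundle $\sM=[\A^1/\G_a]\simeq \Spec k\to B\G_a$, with $\G_a$ acting by translation; this torsor is non-trivial, $B\G_a$ has no non-trivial open substacks, and no representable (Nisnevich) cover trivializes it, so there is nothing for a Mayer--Vietoris or descent argument to glue over. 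This failure is not peripheral to the paper: in \propref{prop:vbundlestack} the lemma is applied to the structure map $s\colon \sX\to[\sX/F]$, the universal $F$-torsor over the classifying stack of a vector bundle, which is non-trivial over every open substack for exactly the same reason (a section would split off the stabilizers). So the descent route cannot reach precisely the case the lemma is needed for. Your opening reduction to $\sX=[X/G]$ also quietly narrows the stated generality --- the lemma is invoked for arbitrary algebraic stacks --- though that is the lesser issue.

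Your fallback route is sound in outline and is, in substance, the paper's own proof written in affine rather than projective coordinates; but you have left exactly the non-formal steps as declared obstacles. The paper takes the same extension $0\to F\to E\to \sO_\sX\to 0$ and compactifies: $\sM$ is realized as the complement of $\P(F)\inj \P(E)$, Quillen localization plus d\'evissage give the fibre sequence of spectra relating $\GG(\P(F))$, $\GG(\P(E))$ and $\GG(\sM)$, and the projective bundle formula for $\GG$-theory identifies the pushforward $\GG(\P(F))\to \GG(\P(E))$ with the inclusion of a direct summand, so that the cofibre is a single copy of $\GG(\sX)$ mapping to $\GG(\sM)$ by $\psi^*$. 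Note what the compactification buys: homotopy invariance is not an input (it falls out as the special case where $\sM$ is a vector bundle, so Thomason's theorem is never invoked), and there is no transfer along $\pi^{-1}(0)\inj \sW$ and no $\G_m$-factor to control. To repair your write-up, discard the descent route entirely and either carry your localization argument to completion (the transfer vanishes because $\pi^{-1}(0)$ is a principal divisor, after which a degree-by-degree comparison in the style of the Bass fundamental theorem is still required) or pass to the projective completion as the paper does.
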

\begin{proof}
	Associated to the affine bundle $\psi: \sM \to \sX$, there exists a short exact sequence
	of locally free sheaves 
	$$
	0 \to F \to E \to \sO_{\sX} \to 0
	$$
	on $\sX$ such that we have a closed immersion $\P(F) \inj \P(E)$ with complement
	$\sM$. By Quillen's localization theorem and d\'{e}vissage theorem, we have the following localization sequence for $\GG$-theory:
	$$
	G(\sM) \to G(\P(E)) \to G(\P(F)),
	$$
	and now using the projective bundle formula for $G$-theory \cite[Section 3C]{KR18}
	we have the required equivalence.
\end{proof}

\begin{prop}\label{prop:vbundlestack}
	Let $\pi: \mathfrak{F} \to \sX$ be a vector bundle stack on an algebraic stack $\sX$ such that
	$\mathfrak{F}$ has a global presentation as $[E/F]$ for some morphism of vector bundles
	$F \to E$ on $\sX$, then the map $\pi^*: G(\sX) \to G(\mathfrak{F})$ is a homotopy equivalence.
\end{prop}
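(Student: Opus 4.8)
The plan is to factor the bundle projection $E \to \sX$ through the atlas of the quotient stack $\mathfrak{F} = [E/F]$ and then to apply homotopy invariance of $G$-theory twice. Here the given morphism $F \to E$ makes (the total space of) $F$ act on (the total space of) $E$ by translation, and $\mathfrak{F} = [E/F]$ is the resulting quotient; write $q \colon E \to \mathfrak{F}$ for the atlas and $\pi \colon \mathfrak{F} \to \sX$ for the structure map. The composite $p := \pi \circ q \colon E \to \sX$ is simply the projection of the vector bundle $E$ onto $\sX$, so by \lemref{Lem:Aff-bun} (homotopy invariance) the pullback $p^* \colon G(\sX) \to G(E)$ is a homotopy equivalence.

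Next I would argue that the atlas $q \colon E \to \mathfrak{F}$ is itself an affine bundle, so that \lemref{Lem:Aff-bun} applies to it as well. As the atlas of the quotient stack $[E/F]$, the map $q$ is a torsor under the pullback $\pi^* F$ of the acting group scheme; since $\pi^* F$ is a vector bundle on $\mathfrak{F}$, this exhibits $q$ as a torsor under a vector bundle, i.e.\ as an affine bundle. Concretely, its class in $\mathrm{Ext}^1(\sO_{\mathfrak{F}}, \pi^* F)$ yields an extension $0 \to \pi^* F \to \mathcal{E} \to \sO_{\mathfrak{F}} \to 0$ with $E \cong \P(\mathcal{E}) \setminus \P(\pi^* F)$, which is exactly the description of an affine bundle used in the proof of \lemref{Lem:Aff-bun}. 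Thus $q^* \colon G(\mathfrak{F}) \to G(E)$ is also a homotopy equivalence.

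Finally, from the identity $p^* = q^* \circ \pi^*$ together with the fact that $p^*$ and $q^*$ are homotopy equivalences, the two-out-of-three property forces $\pi^* \colon G(\sX) \to G(\mathfrak{F})$ to be a homotopy equivalence, which is the desired statement.

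The step I expect to require the most care is the identification of $q$ as an affine bundle. One must check that the translation action of $F$ on $E$ makes $q$ a torsor under $\pi^* F$ even though this action need not be free --- a point of $E$ has stabilizer $\Ker(F \to E)$, so $\mathfrak{F}$ genuinely carries automorphisms --- and that the resulting torsor matches the $\P(\pi^* F) \inj \P(\mathcal{E})$ picture underlying \lemref{Lem:Aff-bun}. Once this is in place, the remainder is a formal consequence of homotopy invariance and two-out-of-three.
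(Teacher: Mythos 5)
Your proof is correct, but it uses a genuinely different decomposition from the paper's. The paper factors the structure map $\pi$ itself through the intermediate classifying stack $[\sX/F]$, writing $\pi$ as $\mathfrak{F} \xrightarrow{p} [\sX/F] \to \sX$ with $p$ descended from the $F$-invariant projection $E \to \sX$; it applies \lemref{Lem:Aff-bun} to $p$, and then handles $[\sX/F] \to \sX$ by means of its atlas section $s \colon \sX \to [\sX/F]$, which is an affine bundle satisfying $\pi \circ s = \id$. You instead factor the vector bundle projection $E \to \sX$ through $\mathfrak{F}$ via the atlas $q \colon E \to \mathfrak{F}$ and finish by two-out-of-three. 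The point you flag as delicate --- that $q$ is a $\pi^* F$-torsor even though the translation action has stabilizers $\Ker(F \to E)$ --- is correct and requires nothing beyond the definition of a quotient stack: the atlas of $[Y/G]$ is always a torsor under the pullback of the acting group (here $E \times_{\mathfrak{F}} E \cong F \times_{\sX} E$), the stabilizers being absorbed into the automorphisms of $\mathfrak{F}$ rather than obstructing the torsor property; the paper itself relies on exactly this fact when it asserts that $s$ is an affine bundle. Your route is somewhat more economical: it avoids the auxiliary stack $[\sX/F]$ altogether, as well as the paper's claim that the translation action descends to make $p$ a ``vector bundle'' --- strictly speaking $p$ is in general only an affine bundle (for instance, when $d = \id \colon F \to F$ the map $p$ becomes the atlas $\sX \to [\sX/F]$, which admits no section), an imprecision that is harmless in the paper only because \lemref{Lem:Aff-bun} is stated for affine bundles, the same form of the lemma your argument invokes. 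What the paper's factorization buys in exchange is the reduction to a clean intermediate statement of independent interest, namely that $G$-theory is invariant under passing to the classifying stack $[\sX/F]$ of a vector bundle; both arguments ultimately rest on the same two inputs, \lemref{Lem:Aff-bun} and the torsor property of quotient-stack atlases.
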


\begin{proof}
	Since $\mathfrak{F} $ is of the form $[E/F]$ for a morphism of vector bundles
	$d: F \to E$ on $\sX$ for the (additive) action of $F$ on $E$ induced by $d$ (see \cite[Section 1]{BF}),
	the map $\pi$ factors as $\mathfrak{F} \xrightarrow{p} [\sX/F] \rightarrow \sX$,
	where $\sX$ has trivial $F$-action and $p: \mathfrak{F} \to [\sX/F]$ is a vector bundle.
	By homotopy invariance of $G$-theory for vector bundles \lemref{Lem:Aff-bun}, it follows that
	$p^*$ is an equivalence, and therefore it suffices to consider the case when $\mathfrak{F}$
	is the classifying stack of a vector bundle. In this case, the structure map $s: \sX \to \mathfrak{F}$
	is an affine bundle and $\pi \circ s = id_{\sX}$ and so the equivalence again follows from
	\lemref{Lem:Aff-bun}.
\end{proof}

\begin{remk}
If $\sX$ is an algebraic stack with resolution property, 
then every coherent $\sO_{\sX}$-module on $\sX$ is the quotient
of a locally free sheaf. Therefore, by \cite[Proposition 1.4.15]{Deligne}, every vector bundle stack on $\sX$ 
has a global presentation as $[E/F]$, for some morphism of vector bundles
$F \to E$ on $\sX$.
\end{remk}

\begin{defn}\label{defn:virtualstructuresheaf}\cite[\S~2.3, Definition]{YPLee})
Under the notations of  \setref{setup: Recall},
let $\pi: h^1/h^0(E^{\bullet \vee}) \to \sX$ be the vector bundle stack.
The closed embedding $\mathfrak{C}_{\sX/\sY} \inj h^1/h^0(E^{\bullet \vee})$ defines a class $[\sO_{\mathfrak{C}_{\sX/\sY}}]$
in $\GG_0(h^1/h^0(E^{\bullet \vee}))$.
The {\sl virtual structure sheaf} $\sO^{\vir}_{[\sX, E^\bullet]}$ for the perfect (relative) obstruction theory $E^{\bullet}$
is defined to be the unique element in $G_0(\sX)$ such that $\pi^* (\sO^{\vir}_{[\sX, E^\bullet]}) = [\sO_{\mathfrak{C}_{\sX/\sY}}]$,
which is well defined by \propref{prop:vbundlestack}.
If we assume $\sX := [X/G]$ is a quotient stack and $E^\bullet$ is an equivariant perfect (relative) obstruction theory on $X$
(relative to a $G$-scheme $Y$) (see \defref{defn:eq. perfectobstructiontheory}), 
the virtual structure sheaf can be naturally realized as an element of $G_0(G, X)$
and we denote it by $\sO^{\vir}_{[G, X, E^\bullet]}$.
\end{defn}


\subsubsection{Virtual Fundamental Class}
The virtual fundamental class associated to a (relative) perfect obstruction theory on a stack $\sX$ 
($\DM$ over a pure dimensional stack $\sY$) is constructed in \cite{BF}
by intersecting the intrinsic normal cone with the zero section of the vector bundle stack associated to the obstruction theory
to produce a class in $\CH_*(\sX)_{\Q}$. This was done in \cite{BF} in the case when the perfect obstruction theory has a global resolution and 
$\sX$ is of $\DM$-type,
and can be done more generally for any perfect obstruction theory \cite[Section 5.2]{Kresch} 
and for any $\sX$ which admits a stratification by global quotient stacks \cite[Corollary 3.12]{Manolache}.

\begin{defn}\label{defn:virtualfundamentalcclass}
Let notations be as in \setref{setup: Recall} and let $\pi: h^1/h^0(E^{\bullet \vee}) \to \sX$ be the vector bundle stack.
The intrinsic normal cone defines a closed subscheme and therefore a cycle $[\mathfrak{C}_{\sX/\sY}]$ on $h^1/h^0(E^{\bullet \vee})$.
Recall that $\pi^* : \CA_*(\sX) \to \CA_*(h^1/h^0(E^{\bullet \vee}))$ is an isomorphism (see \cite[Theorem 4.3.2]{Kresch}), 
therefore one can define  $[\sX^\vir, E^\bullet] \in \CA_*(\sX)$ as the unique element such that $\pi^*([\sX^\vir, E^\bullet]) = [\mathfrak{C}_{\sX/\sY}]$.

From \cite[Theorem 2.1.12]{Kresch}, it follows for a $G$-scheme $X$ and $\sX = [X/G]$, the associated quotient stack, 
there is a natural map $\psi: \CH_*^G(X) \rightarrow \CA_*(\sX) $ which is an isomorphism. 
We define the virtual fundamental class in $\CH_*^G(X)_{\Q}$ as  $\psi^{-1}([\sX^\vir, E^\bullet])$ and denote it by 
$[\sX^\vir, E^\bullet]$. If $\sY = [Y/G]$ is also a quotient stack for a $G$-scheme $Y$ and $f: \sX \to \sY $ is induced by a $G$-equivariant map
from $X \to Y$, then we denote $[\sX^\vir, E^\bullet]$ by $[G, X^\vir, E^\bullet]$, where now $E^\bullet$ is also used
to notate its corresponding $G$-equivariant perfect obstruction theory.
\end{defn}


\section{Virtual classes and the equivariant Riemann-Roch map}\label{sec:VCRR}
In this section we derive the equivariant virtual Riemann-Roch formula, which for a $G$-scheme $X$ and 
an equivariant perfect (relative) obstruction theory $E^\bullet$ on $X$ gives a
relation between the image of the virtual structure sheaf under the Edidin-Graham Riemann-Roch map $\tau^G_X(\sO^\vir_{[G, X, E^\bullet]})$
and the virtual fundamental class $[G, X^\vir, E^\bullet]$ in the Totaro-Edidin-Graham equivariant Chow group 
$\CH^G_*(X)_\Q$ of $X$.

\subsection{Induced perfect obstruction theory}


The following lemma shows that given an equivariant perfect (relative) obstruction theory
on a $G$-scheme $W$ with finite \'{e}tale stabilizers (relative to a smooth $G$-scheme), it descends to a perfect obstruction theory on the associated
quotient stack $[W/G]$. This is well known to the experts in the area (see \cite[Example 4.14]{Kiem-Park}),
but we spell out the proof here as the explicit construction of the induced perfect obstruction theory on $[W/G]$
is used in the proof of our main theorem. See \defref{defn:eq. perfectobstructiontheory} for the definition of equivariant
perfect obstruction theories.


\begin{lem}\label{lem:inducedpotgeneralsetup}
Let $\sW \xrightarrow{f_1} \sY \xrightarrow{f_2} \sZ$ be a morphism of stacks, 
where $\sZ$ is smooth. 
Further assume that $f_2$ is smooth and  $f_1$, $f_2 \circ f_1$ are of $\DM$-type. 
Then any relative perfect obstruction theory $\phi: F^\bullet  \to \LL_{\sW/\sY}$ defines 
a (non-unique) perfect obstruction theory $\phi': \tilde{E}^\bullet \to \LL_{\sW/\sZ}$ on $\sW$. 
\end{lem}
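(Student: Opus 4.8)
The plan is to construct $\tilde E^\bullet$ by \emph{extending} $F^\bullet$ along the relative differentials of the smooth map $f_2$, so that it reproduces at the level of obstruction theories the transitivity triangle of cotangent complexes. The composite $\sW\xrightarrow{f_1}\sY\xrightarrow{f_2}\sZ$ gives the distinguished triangle
\[
f_1^*\LL_{\sY/\sZ}\to \LL_{\sW/\sZ}\to \LL_{\sW/\sY}\xrightarrow{\delta} f_1^*\LL_{\sY/\sZ}[1]
\]
in $\D(\QCoh(\sW))$. Since $f_2$ is smooth, $\LL_{\sY/\sZ}\simeq\Omega_{\sY/\sZ}$ is a locally free sheaf in degree $0$, so $f_1^*\LL_{\sY/\sZ}$ is a vector bundle on $\sW$ (the derived and ordinary pullbacks agree). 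First I would form $g:=\delta\circ\phi\colon F^\bullet\to f_1^*\LL_{\sY/\sZ}[1]$ and set $\tilde E^\bullet:=\fib(g)$, obtaining the triangle
\[
f_1^*\LL_{\sY/\sZ}\to \tilde E^\bullet\to F^\bullet\xrightarrow{g} f_1^*\LL_{\sY/\sZ}[1].
\]
As $f_1^*\LL_{\sY/\sZ}$ sits in degree $0$ and $F^\bullet$ is perfect of amplitude $[-1,0]$, the complex $\tilde E^\bullet$ is perfect of amplitude contained in $[-1,0]$, so it is a legitimate candidate.

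Next I would produce $\phi'$. The square with top and bottom rows $g$ and $\delta$, left vertical $\phi$ and right vertical $\id_{f_1^*\LL_{\sY/\sZ}[1]}$, commutes by the definition of $g$; rotating both triangles so that this square becomes the pair of base maps, axiom TR3 in $\D(\QCoh(\sW))$ supplies a (non-unique) morphism $\phi'\colon\tilde E^\bullet\to\LL_{\sW/\sZ}$ fitting into a morphism of triangles
\[
\begin{CD}
f_1^*\LL_{\sY/\sZ} @>>> \tilde E^\bullet @>>> F^\bullet @>g>> f_1^*\LL_{\sY/\sZ}[1]\\
@| @VV{\phi'}V @VV{\phi}V @| \\
f_1^*\LL_{\sY/\sZ} @>>> \LL_{\sW/\sZ} @>>> \LL_{\sW/\sY} @>\delta>> f_1^*\LL_{\sY/\sZ}[1].
\end{CD}
\]
The non-uniqueness in the statement is exactly the non-uniqueness of this TR3 fill-in.

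It remains to verify that $\phi'$ is a perfect obstruction theory relative to $\sZ$. Taking cohomology of the two triangles and using that $f_1^*\LL_{\sY/\sZ}$ is concentrated in degree $0$ produces the exact sequence
\[
0\to h^{-1}(\tilde E^\bullet)\to h^{-1}(F^\bullet)\to f_1^*\Omega_{\sY/\sZ}\to h^0(\tilde E^\bullet)\to h^0(F^\bullet)\to 0
\]
together with the analogous sequence for $\LL_{\sW/\sZ}$ and $\LL_{\sW/\sY}$, linked by the vertical maps $\id$, $h^i(\phi')$ and $h^i(\phi)$. Since $h^0(\phi)$ is an isomorphism, $h^{-1}(\phi)$ is surjective and the flanking map is the identity, the five lemma gives that $h^0(\phi')$ is an isomorphism, and a short chase of the left-hand squares (using surjectivity of $h^{-1}(\phi)$ and commutativity with $\id$) shows that $h^{-1}(\phi')$ is surjective. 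The same sequence shows $h^i(\tilde E^\bullet)=0$ for $i>0$ and that $h^{-1}(\tilde E^\bullet), h^0(\tilde E^\bullet)$ are coherent, so all conditions of \defref{defn:obstructiontheory} and \defref{defn:perfectobstructiontheory} hold.

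The main obstacle is conceptual rather than computational: the map $\phi'$ is genuinely non-canonical, so the conclusion must be phrased as a \emph{non-unique} obstruction theory, and one must check that the hypotheses that $f_1$ and $f_2\circ f_1$ are of $\DM$-type are precisely what guarantee that $\LL_{\sW/\sY}$ and $\LL_{\sW/\sZ}$ have coherent cohomology concentrated in degrees $\le 0$, so that the obstruction-theory conditions are even meaningful; the diagram chase itself is then routine.
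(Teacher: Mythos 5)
Your construction is the same as the paper's: form $g:=\delta\circ\phi$, set $\tilde E^\bullet:=\fib(g)$, obtain $\phi'$ as a (non-unique) TR3 fill-in, and verify the obstruction-theory axioms by a five-lemma chase. However, there is a genuine gap at the very first step: you assert that, since $f_2$ is smooth, $\LL_{\sY/\sZ}\simeq\Omega_{\sY/\sZ}$ is a locally free sheaf concentrated in degree $0$. That is true only when $f_2$ is representable (or of $\DM$-type), which the lemma deliberately does \emph{not} assume --- only $f_1$ and $f_2\circ f_1$ are required to be of $\DM$-type, and the whole point of the statement is to allow $f_2$ to have stacky fibers. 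For a smooth morphism of Artin stacks, $\LL_{\sY/\sZ}$ is in general only of perfect amplitude $[0,1]$. This generality is used in the paper: in the proof of \propref{prop:Moritainducedperfectobstructiontheory} the lemma is applied to $[G\times W/G\times H]\to \BGH \xrightarrow{q_G} \BG$, where $q_G$ is explicitly not representable and $\LL_{\BGH/\BG}$ is concentrated in degree $1$ (the dual Lie algebra of $H$); similarly, in Construction~\ref{constr: equivariant} the target is $\BG$, whose cotangent complex has a degree-$1$ piece.

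Once $\LL_{\sY/\sZ}$ has a degree-$1$ part, your claim that $\tilde E^\bullet=\fib(g)$ automatically has perfect amplitude in $[-1,0]$ fails: the long exact sequence of the triangle $f_1^*\LL_{\sY/\sZ}\to\tilde E^\bullet\to F^\bullet\xrightarrow{+1}$ gives
\begin{equation*}
h^1(\tilde E^\bullet)\;\simeq\; h^1(f_1^*\LL_{\sY/\sZ})\big/\,\im\!\left(h^0(F^\bullet)\to h^1(f_1^*\LL_{\sY/\sZ})\right),
\end{equation*}
which has no reason to vanish a priori. This is exactly where the $\DM$-type hypothesis on $f_2\circ f_1$ must enter, and it is absent from your argument (your closing remark invokes it only to make the axioms ``meaningful,'' not to prove anything): since $\sW\to\sZ$ is of $\DM$-type, $h^1(\LL_{\sW/\sZ})=0$, so the transitivity triangle shows the connecting map $h^0(\delta)\colon h^0(\LL_{\sW/\sY})\to h^1(f_1^*\LL_{\sY/\sZ})$ is surjective; combining this with the isomorphism $h^0(\phi)$ shows $h^0(F^\bullet)\to h^1(f_1^*\LL_{\sY/\sZ})$ is surjective, whence $h^1(\tilde E^\bullet)=0$. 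Your five-lemma verification of the conditions on $h^0(\phi')$ and $h^{-1}(\phi')$ also has to be rerun with the longer exact sequences containing the $h^0$ and $h^1$ terms of $f_1^*\LL_{\sY/\sZ}$, but that part goes through essentially unchanged. In short: your proof is complete in the special case where $f_2$ is representable, but not in the stated (and needed) generality.
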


\begin{proof}

	The  composition of maps $\sW \to \sY \to \sZ$ gives the following distinguished triangle of cotangent complexes in $\D^b(\Coh(\sW))$:
	\begin{equation} \label{eq:cotgt cplx}
	f_1^*\LL_{\sY/\sZ} \to \LL_{\sW/\sZ} \to \LL_{\sW/\sY} \xrightarrow{+1}.
	\end{equation}
	Note that  $\LL_{\sY/\sZ}$ is  concentrated in degree $[0,1]$ as $f_2$ is assumed to be a smooth morphism. 
	Consider the following commutative diagram of distinguished triangles:
	\begin{equation}\label{eq:POTEquivariant}
	\xymatrix
	{
		\tilde{E}^\bullet\ar[r] \ar[d]^{\phi'} & F^\bullet \ar[r]^>>>>>>{a \circ \phi} \ar[d]^{\phi} & f_1^*\LL_{\sY/\sZ}[1] \ar[r]^>>>>>>{+1} \ar[d]^{Id} & \\
		\LL_{\sW/\sZ} \ar[r]  & \LL_{\sW/\sY}  \ar[r]^>>>>>>a  & f_1^*\LL_{\sY/\sZ}[1] \ar[r]^>>>>>>{+1} & ,
	}
	\end{equation}
	where $a: \LL_{\sW/\sY} \to f_1^*\LL_{\sY/\sZ}[1]$ is the map in the distinguished triangle \eqref{eq:cotgt cplx},
	$\tilde{E}^\bullet$ is defined as the object in $\D^b(\Coh(\sW))$ that makes the top horizontal diagram a distinguished triangle 
	and the (non-unique) map $\phi'$ is defined by the axioms for triangulated categories. 
	To prove the lemma we need to show that $\phi' : \tilde{E}^\bullet \to  \LL_{\sW/\sZ}$ defines a perfect obstruction theory, 
	that is we need to show that $h^{-1}(\phi')$ is surjective, $h^0(\phi')$ is an isomorphism 
	and $\tilde{E}^\bullet $ is of perfect amplitude contained in $[-1,0]$.
	
	From \eqref{eq:POTEquivariant} we obtain the following long exact sequence of cohomology sheaves: 
	
	\tiny
	\begin{equation}
	\xymatrix@C-=0.5cm
	{
		h^{-1} (f_1^*L_{\sY/\sZ})	\ar[r] \ar[d]^{\simeq} & h^{-1}(\tilde{E}^\bullet) \ar[r] \ar[d]^{h^{-1}(\phi')} & h^{-1}(F^\bullet) \ar[r] \ar@{->>}[d]^{h^{-1}(\phi)} 
		&  &&&\\
		h^{-1}( f_1^*L_{\sY/\sZ})	\ar[r]  & h^{-1}(L_{\sW/ \sZ}) \ar[r]  & h^{-1}(L_{\sW/\sY}) \ar[r]  &  &&&	\\
		& h^{0} (f_1^*L_{\sY/\sZ})	\ar[r] \ar[d]^\simeq & h^{0}(\tilde{E}^\bullet) \ar[r] \ar[d]^{h^0(\phi')} & h^{0}(F^\bullet) \ar[r] \ar[d]^\simeq & 
		h^{1} (f_1^*L_{\sY/\sZ})  \ar[d]^\simeq &&\\
		&h^{0}( f_1^*L_{\sY/\sZ})	\ar[r] & h^{0}(L_{\sW/ \sZ}) \ar[r]  & h^{0}(L_{\sW/\sY}) \ar[r]  & h^{1}(f_1^*L_{\sY/\sZ})&&
	}
	\end{equation}
	\normalsize
	It now follows from the five lemma applied to the above diagram that $h^0(\phi')$ is an isomorphism and the four lemma establishes
	the surjectivity of $h^{-1}(\phi')$.
	
	To prove that $\tilde{E}^\bullet$ is of perfect amplitude contained in $[-1,0]$ consider the top
	distinguished triangle of \eqref{eq:POTEquivariant}:
	\begin{equation} \label{eq:perfect-ampl}
\tilde{E}^\bullet \rightarrow F^{\bullet} \xrightarrow{a \circ \phi'} f_1^*(L_{\sY/ \sZ})[1] \xrightarrow{+1}.
	\end{equation}
	Since $f_1^* (L_{\sY/\sZ })$ is of perfect amplitude contained in
	$[0,1]$ and $F^{\bullet}$ is of perfect amplitude contained in $[-1,0]$, it suffices to check that $h^1(\tilde{E}^\bullet)$ vanishes.
	From the long exact sequence of cohomology sheaves associated to \eqref{eq:perfect-ampl}, it follows that
	$$h^1(\tilde{E}^\bullet) \simeq h^1(L_{\sY/ \sZ})/ \text{Im} (h^0(F^\bullet) \xrightarrow{h^0(a \circ \phi')} h^1 (L_{\sY/ \sZ})),$$
	since $h^1(F^\bullet)$ vanishes.
	Now by considering the bottom distinguished triangle of \eqref{eq:POTEquivariant} and 
	since $\sW \to \sZ$ is a $\DM$-type morphism  (which implies $h^1(L_{\sW/ \sZ})$
	vanishes), we conclude that $h^0(a)$ is surjective. Further since $h^0(\phi')$ is an isomorphism as proved above it follows that $h^1(\tilde{E}^\bullet)$
	vanishes.
	
\end{proof}

\begin{remk}
 In \lemref{lem:inducedpotgeneralsetup} we need to assume that the morphisms are of $\DM$-type only to conclude that the induced obstruction theory 
 is of tor amplitude contained in $[-1,0]$. 
\end{remk}

The below construction shows how an equivariant perfect (relative) obstruction theory on a $G$-scheme $X$
(relative to a smooth $G$-scheme) can be lifted to a perfect obstruction theory on the scheme $X \times^{G} U$
for a good pair $(V,U)$. Refer to \secref{Sec:Eq-Chow} for a definition of good pairs. 
\begin{const}\label{constr: equivariant}
	Let $\tilde{f} : X \to Y \in \Sch_k^G$ where $Y \in \Sm_k^G$ with $\dim(X) = n$.
	Let $\sX:= [X/G]$ and $\sY := [Y/G]$ and let ${f}: \sX \to \sY$ be the map induced on the quotient stacks. 
	Fix an integer $i$ and let  $(V, U)$ be a be an $l$-dimensional good pair for $n-i$,
	and let $\sV:= [V/G]$ with structure map $q_{\sV}: \sV \to \BG$ and let $\sU := [U/G]$.  
	We have the following distinguished triangle in $D^b(\Coh(\sV))$ given by the composition of maps $\sV \to \BG \to \Spec(k)$: 
	\begin{equation}
	q_\sV^* \LL_{\BG/ \Spec k} \to \LL_{\sV/ \Spec k} \to \LL_{\sV/ \BG} \xrightarrow{+1}.
	\end{equation}
	Note that the cotangent complex of $\sV$, $\LL_{\sV/ \Spec k} \simeq [\Omega_{V/ \Spec k} \to \fg^\vee]$ is concentrated in degrees $0$ and $1$. It follows that $\LL_{\sV/BG}$ is precisely $\Omega_{V/\Spec k} \simeq V$ (with the underlying $G$-action). 
	
	Consider the following diagram of cartesian squares:   

	\begin{equation}\label{eq:cartesianrelative}
	\xymatrix
	{
		\sX \times_\BG \sU \ar[r]^{f_{\sU}} \ar[d]^{p_{\sX}} & \sY \times_\BG \sU \ar[d]^{p_{\sY}} \ar[r]^>>>>>>{\bar{p}_{\sU}} & \sU \ar[d]\\
		\sX     \ar[r]^{f}	                             & \sY         \ar[r]                                                      &       \BG    .       
	}
	\end{equation}
We have natural isomorphisms $\sX \times_\BG \sU \simeq [X \times U/G] \simeq X \times^G U$, where the last two terms are isomorphic as the 
$G$-action on $U$ is free.  
As $p_{\sY}$ is flat, this implies that the left cartesian square of \eqref{eq:cartesianrelative} is Tor-independent.
	We therefore have a natural isomorphism:
	\begin{equation}
		\LL_{\sX \times_\BG \sU/\sY} \xrightarrow{\simeq} p_\sX^* \LL_{\sX/\sY}  \oplus f_{\sU}^*\LL_{\sY \times_{\BG} \sU/\sY}.
	\end{equation}
As the right square of  \eqref{eq:cartesianrelative} is cartesian and all the maps are flat, it follows that 
$\LL_{\sY\times_\BG \sU/\sY} \simeq \bar{p}_{\sU}^* \LL_{\sU/\BG} \simeq \bar{p}_{\sU}^* \Omega_{\sU/\BG}$.  
Therefore, we have $$\LL_{\sX \times_\BG \sU/\sY} \simeq p_\sX^* \LL_{\sX/\sY} \oplus p_\sU^* \Omega_{\sU/ BG},$$
where $p_{\sU}: \sX \times_{\BG} \sU \to \sU$ denotes the projection to $\sU$.

Let $E^\bullet \to \LL_{\sX/\sY}$ be a perfect relative obstruction theory for the map $f: \sX \to \sY$. Then
\begin{equation}\label{eqn:formixed space}
F^\bullet:= p_\sX^* E^\bullet \oplus p_\sU^* \Omega_{\sU/ \BG} \to p_\sX^* \LL_{\sX/\sY} \oplus p_\sU^* \Omega_{\sU/ \BG} \simeq \LL_{(\sX \times_\BG \sU )/\sY}
\end{equation}   
defines a perfect relative obstruction theory on $\sX \times_\BG \sU$ over $\sY$.

Recall that $\sX \times_\BG \sU \simeq X \times^G U$ is an algebraic space and therefore a $\DM$-stack. Now it follows from 
\lemref{lem:inducedpotgeneralsetup} applied to the composition of morphisms
$\sX \times_\BG \sU \to \sY \to \BG$ that
$F^\bullet \to \LL_{(\sX \times_\BG \sU)/\sY}$ induces a perfect obstruction theory:
\begin{equation}\label{eqn:lemmainduced}
\tilde{E}^\bullet \to \LL_{(X \times^G U)/\Spec k},
\end{equation}
such that $\tilde{E}^\bullet$ fits into a distinguished triangle in $D^b(\Coh(X \times^G U))$:
\begin{equation}\label{eqn:lemmainduced2}
p_{\sX}^* \circ f^* (\LL_{\sY/\Spec k}) \to \tilde{E}^\bullet \to F^\bullet \xrightarrow{+1}.
\end{equation}

\end{const}

In the rest of this section we study the relation between the virtual classes 
corresponding to a given equivariant perfect obstruction theory and the induced perfect obstruction theories
constructed above. The following proposition shows that the pullback from $\sX$ to $\sX \times_{\BG} \sU$ of the virtual class associated to a
perfect relative obstruction theory on $\sX$ with respect to $\sY$ is the virtual class associated to the
pullback of the perfect relative obstruction theory (as constructed above).

\begin{prop}\label{prop: pullbackvirtualclass}
With notations as in Construction \ref{constr: equivariant}, let us further assume that $\sY$ is a pure dimensional stack. 
Then we have the following:  
\begin{enumerate}
\item $p_\sX^*(\sO^\vir_{[\sX, E^\bullet]}) = \sO^\vir_{[\sX \times_\BG \sU, F^\bullet]}$ in $\GG_0(\sX \times_\BG \sU)$,

\item $p_{\sX}^* [\sX^\vir, E^\bullet] = [\sX \times_\BG \sU ^\vir, F^\bullet]$ in $A_*(\sX \times_\BG \sU)_\Q$.
\end{enumerate}
 
\end{prop}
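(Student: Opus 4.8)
The plan is to verify both identities by applying the characterizing pullback along the vector bundle stack projection. Abbreviate $\sW := \sX \times_\BG \sU$, and write $\mathfrak{E} := h^1/h^0(E^{\bullet\vee}) \xrightarrow{\pi_E} \sX$ and $\mathfrak{F} := h^1/h^0(F^{\bullet\vee}) \xrightarrow{\pi_F} \sW$ for the vector bundle stacks attached to $E^\bullet$ and to $F^\bullet$. By Definitions \ref{defn:virtualstructuresheaf} and \ref{defn:virtualfundamentalcclass}, the classes $\sO^\vir_{[\sX,E^\bullet]}$ and $[\sX^\vir, E^\bullet]$ are the unique ones whose $\pi_E^*$-pullbacks equal $[\sO_{\mathfrak{C}_{\sX/\sY}}]$ and $[\mathfrak{C}_{\sX/\sY}]$, and likewise over $\sW$ with $\pi_F$ and $F^\bullet$. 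Since $\pi_E^*,\pi_F^*$ are isomorphisms on $\GG_0$ (Proposition \ref{prop:vbundlestack}) and on $\CA_*(-)_\Q$ (\cite[Theorem 4.3.2]{Kresch}), for (1) it suffices to prove $\pi_F^*\big(p_\sX^*\sO^\vir_{[\sX,E^\bullet]}\big) = [\sO_{\mathfrak{C}_{\sW/\sY}}]$, and for (2) the same statement with structure sheaves replaced by fundamental cycles.

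The geometric heart of the argument is a structural description of $\mathfrak{F}$ and of $\mathfrak{C}_{\sW/\sY}$ inside it. The morphism $p_\sX$ is smooth, being the base change of the smooth morphism $\sU \to \BG$, with relative cotangent complex $\Omega_{\sW/\sX} \simeq p_\sU^*\Omega_{\sU/\BG}$. Because $F^\bullet = p_\sX^* E^\bullet \oplus p_\sU^*\Omega_{\sU/\BG}$ with the second summand locally free and concentrated in degree $0$, and since $h^1/h^0((-)^\vee)$ commutes with the flat pullback $p_\sX^*$ on the first summand, one gets a canonical identification $\mathfrak{F} \simeq (p_\sX^*\mathfrak{E}) \times_\sW \mathfrak{B}$, where $\mathfrak{B} := h^1/h^0((p_\sU^*\Omega_{\sU/\BG})^\vee)$ is the classifying stack of the relative tangent bundle $T_{p_\sX} := (p_\sU^*\Omega_{\sU/\BG})^\vee$ (the extra summand lies in degree $0$ and acts trivially). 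Applying the compatibility of the intrinsic normal cone with the smooth morphism $p_\sX$ to the factorization $\sW \xrightarrow{p_\sX} \sX \xrightarrow{f} \sY$ (cf. \cite{BF}, \cite{Manolache}) then gives, compatibly with the closed immersions $\mathfrak{C}_{\sX/\sY} \inj \mathfrak{E}$ and $\mathfrak{C}_{\sW/\sY} \inj \mathfrak{F}$, the identification $\mathfrak{C}_{\sW/\sY} \simeq (p_\sX^*\mathfrak{C}_{\sX/\sY}) \times_\sW \mathfrak{B}$; equivalently, $\mathfrak{C}_{\sW/\sY}$ is the preimage of $p_\sX^*\mathfrak{C}_{\sX/\sY}$ under the projection $\mathrm{pr}\colon \mathfrak{F} \to p_\sX^*\mathfrak{E}$. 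Establishing this identification --- and in particular checking that it is compatible with the obstruction-theoretic embeddings induced by $E^\bullet \to \LL_{\sX/\sY}$ and $F^\bullet \to \LL_{\sW/\sY}$ --- is the main obstacle; here the purity hypothesis on $\sY$ is what guarantees that the intrinsic normal cones are defined with the expected dimension.

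Granting this, (1) is a short diagram chase. Let $\bar p \colon p_\sX^*\mathfrak{E} \to \mathfrak{E}$ be the base-change projection and $\pi_E'\colon p_\sX^*\mathfrak{E} \to \sW$, so that $\pi_E \circ \bar p = p_\sX \circ \pi_E'$ and $\pi_F = \pi_E' \circ \mathrm{pr}$. Then
\[
\pi_F^*\,p_\sX^*\,\sO^\vir_{[\sX,E^\bullet]} = \mathrm{pr}^*\,\bar p^*\,\pi_E^*\,\sO^\vir_{[\sX,E^\bullet]} = \mathrm{pr}^*\,\bar p^*\,[\sO_{\mathfrak{C}_{\sX/\sY}}].
\]
Since $\bar p$ and $\mathrm{pr}$ are flat, flat base change in $\GG$-theory rewrites the right-hand side as $[\sO_{\mathrm{pr}^{-1}(p_\sX^*\mathfrak{C}_{\sX/\sY})}] = [\sO_{\mathfrak{C}_{\sW/\sY}}]$, using the identification of the previous paragraph. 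As $[\sO_{\mathfrak{C}_{\sW/\sY}}] = \pi_F^*\,\sO^\vir_{[\sW,F^\bullet]}$ and $\pi_F^*$ is injective, this yields $p_\sX^*\sO^\vir_{[\sX,E^\bullet]} = \sO^\vir_{[\sW,F^\bullet]}$, which is (1).

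For (2) I would run the identical chase with $\GG_0$ replaced by Kresch's Chow groups $\CA_*(-)_\Q$, the class $[\sO_{\mathfrak{C}}]$ replaced by the fundamental cycle $[\mathfrak{C}]$, and flat base change in $\GG$-theory replaced by flat pullback of cycles: one has $\bar p^*[\mathfrak{C}_{\sX/\sY}] = [p_\sX^*\mathfrak{C}_{\sX/\sY}]$ and $\mathrm{pr}^*[p_\sX^*\mathfrak{C}_{\sX/\sY}] = [\mathfrak{C}_{\sW/\sY}]$, since flat pullback carries the fundamental cycle of a subcone to that of its preimage with the cone multiplicities preserved. Injectivity of $\pi_F^*$ on $\CA_*(-)_\Q$ then gives $p_\sX^*[\sX^\vir, E^\bullet] = [\sW^\vir, F^\bullet]$, completing the proof.
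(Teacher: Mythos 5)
Your proof is correct, but it takes a genuinely different route from the paper's. Writing $\sW = \sX \times_\BG \sU$ and $g = f\circ p_\sX$, the paper never manipulates the intrinsic normal cones directly: it identifies $\sO^\vir_{[\sX,E^\bullet]} = f^!_{E^\bullet}(\sO_\sY)$ and $\sO^\vir_{[\sW,F^\bullet]} = g^!_{F^\bullet}(\sO_\sY)$ as virtual pullbacks (and likewise $[\sX^\vir,E^\bullet]=f^!_{E^\bullet}([\sY])$, $[\sW^\vir,F^\bullet]=g^!_{F^\bullet}([\sY])$ via \cite[Corollary 3.12]{Manolache}), checks from the very definition of $F^\bullet$ that $p_\sX^*E^\bullet$, $F^\bullet$, $p_\sU^*\Omega_{\sU/\BG}$ form a compatible triple of obstruction theories for $\sW\to\sX\to\sY$ (a commuting diagram of distinguished triangles), and then invokes functoriality of virtual pullbacks (\cite[Proposition 2.11]{QU} for $\GG_0$, \cite[Theorem 4.8]{Manolache} for Chow groups) together with the fact that the virtual pullback along the smooth morphism $p_\sX$ equals flat pullback (\cite[Remark 2.3]{QU}, \cite[Remark 3.10]{Manolache}). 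Your argument unpacks, in this special case, exactly what those black boxes prove: the decompositions $\mathfrak{F}\simeq p_\sX^*\mathfrak{E}\times_\sW BT_{p_\sX}$ and $\mathfrak{C}_{\sW/\sY}\simeq p_\sX^*\mathfrak{C}_{\sX/\sY}\times_\sW BT_{p_\sX}$ (with $BT_{p_\sX}$ the classifying stack of the relative tangent bundle), compatible with the obstruction-theoretic embeddings, followed by flat base change along $\bar p$ and $\mathrm{pr}$ and the injectivity of $\pi_F^*$. What your route buys is self-containedness and a transparent geometric picture; what the paper's buys is brevity, since the only thing left to verify is the compatibility diagram. The one caveat about your write-up: the cone identification, which you correctly flag as the main obstacle, is asserted (with a ``cf.'') rather than proved; it is precisely the smooth-morphism compatibility of intrinsic normal cones that powers the cited functoriality results, so a complete version of your argument would have to carry out that verification (locally embed $\sX$ in a scheme smooth over $\sY$, pull the embedding back along the smooth map $p_\sX$, use that relative normal cones pull back along smooth morphisms, and glue by smooth descent, checking agreement with the closed immersion induced by $F^\bullet \to \LL_{\sW/\sY}$). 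Also, pure-dimensionality of $\sY$ is not what makes that identification work --- it is purity-free; the hypothesis is needed so that the relative intrinsic normal cone setup applies and, for part (2), so that $[\sY]$, and hence the virtual fundamental class, lives in a single dimension.
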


\begin{proof}
Consider the composition of maps $\sX \times_\BG \sU \xrightarrow{p_\sX} \sX \xrightarrow{f} \sY$ and let us denote the composite map by $g$. 
Note that all the morphisms are representable, hence in particular they are of $\DM$-type. 
Let $f_{E^\bullet}^!: G_0(\sY) \to G_0(\sX)$ and $g_{F^\bullet}^!: G_0(\sY) \to G_0(\sX \times_\BG \sU)$ 
be the  virtual pull-back maps defined in \cite[Definition 2.2]{QU}. 
By \defref{defn:virtualstructuresheaf} 
(and identifying the Gysin pullback in {\sl loc. it.} with the inverse of $\pi^*$) we have
\begin{equation}
\sO^\vir_{[\sX,  E^\bullet]} = f_{E^\bullet}^!(\sO_{\sY}) ~~\text{and}~~ \sO^\vir_{[\sX \times_{BG} \sU, F^\bullet]}  = g_{F^\bullet}^!(\sO_{\sY}).
\end{equation}
Clearly by construction we have a commutative diagram of distinguished triangles:
\begin{equation}
	\xymatrix
{
 p_{\sX}^*E^\bullet \ar[r] \ar[d] & F^\bullet \ar[r] \ar[d] & p_{\sU}^*\Omega_{\sU/\BG} \ar[r]^>>>>>>{+1} \ar[d]^{\simeq} & \\
  p_{\sX}^*\LL_{f} \ar[r]  & \LL_{g}  \ar[r]  & \LL_{p_{\sX}} \ar[r]^>>>>>>{+1} & 
,}
\end{equation}
which shows that the perfect obstruction theories are compatible and therefore satisfy the hypothesis of \cite[Proposition 2.11]{QU}.
Hence we have
\begin{align}
\sO^\vir_{[\sX \times_{BG} \sU, F^\bullet]} & = g_{F^\bullet}^!(\sO_{\sY}) \\ \nonumber
&= (p_{{\sX}})^!_{ (p_{\sU}^* \Omega_{\sU/\BG})} (f_{E^\bullet}^!(\sO_{\sY}))\\ \nonumber
&= p_\sX^*[\sO^\vir_{[\sX,  E^\bullet]}],  
\end{align}
where the second equality follows from \cite[Proposition 2.11]{QU} and the last equality follows from \cite[Remark 2.3]{QU} as $p_\sX$ is smooth. 
This completes the proof of  $(1)$.

To prove $(2)$ we argue similarly, this time applying the virtual pullback defined for Chow groups. Let $f_{E^\bullet}^!: \CA_*(\sY)_\Q \to \CA_*(\sX)_\Q$ 
and $g_{F^\bullet}^!: \CA_*(\sY)_\Q \to \CA_* (\sX \times_\BG \sU)_\Q$ 
be the virtual pullback maps defined in \cite[Contruction~3.6]{Manolache}. From \cite[Corollary 3.12]{Manolache} it follows that
\begin{equation}
	[\sX^\vir, E^\bullet] = f_{E^\bullet}^! ([\sY]) ~~\text{and} ~~ [\sX \times_\BG \sU^\vir, F^\bullet] = g_{F^\bullet}^!([\sY]).
\end{equation}
Now  we have the following: 
\begin{align}
[\sX \times_\BG \sU^{\vir}, F^\bullet] &= g_{F^\bullet}^!([\sY]) \\        \nonumber
                                            & = (p_{\sX})^!_{ (p_{\sU}^* \Omega_{\sU/\BG})} (f_{E^\bullet}^!([\sY])) \\ \nonumber
                                            & = p_\sX^*([\sX^\vir, E^\bullet]),                                       \nonumber
\end{align}
where the second equality follows from \cite[Theorem 4.8]{Manolache} and the last equality follows as $p_\sX$ is smooth and the perfect
obstruction theory is trivial (see e.g. \cite[Section 5.1]{Kresch} or \cite[Remark 3.10]{Manolache}). 
\end{proof}

Next we show that the virtual classes associated to the given perfect relative obstruction theory on $\sW/\sY$ 
and the descended perfect relative obstruction theory on $\sW/\sZ$
of \lemref{lem:inducedpotgeneralsetup} are the same. 

\begin{lem}\label{lem:equivalenceunderinduced}
	Under the notations of \lemref{lem:inducedpotgeneralsetup}, further assume that $\sY$ and $\sZ$ are pure dimensional. 
	Then we have the following:
	\begin{enumerate}
		\item $\sO^\vir_{[\sW, F^\bullet]} =  \sO^\vir_{[\sW, \tilde{E}^\bullet]} $ in $\GG_0(\sW)$;
		\item $[\sW^\vir, F^\bullet] = [\sW^\vir, \tilde{E}^\bullet]$ in $\CA_*(\sW)_\Q$.
	\end{enumerate}
\end{lem}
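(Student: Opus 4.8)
The plan is to reinterpret both virtual classes as virtual pull-backs of the structure sheaf (resp. the fundamental class) of the base, exactly as in the proof of \propref{prop: pullbackvirtualclass}, and then to invoke the functoriality of virtual pull-backs for the composition $\sW \xrightarrow{f_1} \sY \xrightarrow{f_2} \sZ$. Writing $(f_1)^!_{F^\bullet}$ for Qu's virtual pull-back attached to $F^\bullet \to \LL_{\sW/\sY}$ and $(f_2\circ f_1)^!_{\tilde{E}^\bullet}$ for the one attached to $\tilde{E}^\bullet \to \LL_{\sW/\sZ}$, \defref{defn:virtualstructuresheaf} (identifying the Gysin map with the inverse of $\pi^*$) gives
\begin{equation}
\sO^\vir_{[\sW, F^\bullet]} = (f_1)^!_{F^\bullet}(\sO_\sY), \qquad \sO^\vir_{[\sW, \tilde{E}^\bullet]} = (f_2\circ f_1)^!_{\tilde{E}^\bullet}(\sO_\sZ),
\end{equation}
and likewise $[\sW^\vir, F^\bullet] = (f_1)^!_{F^\bullet}([\sY])$ and $[\sW^\vir, \tilde{E}^\bullet] = (f_2\circ f_1)^!_{\tilde{E}^\bullet}([\sZ])$ by \cite[Corollary 3.12]{Manolache}. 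Since $f_2$ is smooth its virtual pull-back is the flat pull-back $f_2^*$, so the whole lemma reduces to the compatibility $(f_2\circ f_1)^!_{\tilde{E}^\bullet} = (f_1)^!_{F^\bullet}\circ f_2^*$ together with $f_2^*(\sO_\sZ) = \sO_\sY$ and $f_2^*([\sZ]) = [\sY]$.

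To feed this into the functoriality theorems I would first record the compatibility datum. Rotating the defining triangle of $\tilde{E}^\bullet$ from \lemref{lem:inducedpotgeneralsetup} and placing it over the cotangent-complex triangle \eqref{eq:cotgt cplx} yields the morphism of distinguished triangles
\begin{equation}
\xymatrix{
f_1^*\LL_{\sY/\sZ} \ar[r] \ar[d]^{Id} & \tilde{E}^\bullet \ar[r] \ar[d]^{\phi'} & F^\bullet \ar[d]^{\phi}\\
f_1^*\LL_{\sY/\sZ} \ar[r] & \LL_{\sW/\sZ} \ar[r] & \LL_{\sW/\sY},
}
\end{equation}
which is precisely the compatibility of obstruction theories required by \cite[Proposition 2.11]{QU} and \cite[Theorem 4.8]{Manolache}, with the tautological obstruction theory $\LL_{\sY/\sZ}$ in the role of the (smooth) obstruction theory on $f_2$.

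For part (1), \cite[Proposition 2.11]{QU} then gives $(f_2\circ f_1)^!_{\tilde{E}^\bullet} = (f_1)^!_{F^\bullet}\circ (f_2)^!$, and since $f_2$ is smooth \cite[Remark 2.3]{QU} identifies $(f_2)^!$ with $f_2^*$; as $f_2^*(\sO_\sZ)=\sO_\sY$ we obtain
\begin{equation}
\sO^\vir_{[\sW, \tilde{E}^\bullet]} = (f_1)^!_{F^\bullet}\bigl(f_2^*(\sO_\sZ)\bigr) = (f_1)^!_{F^\bullet}(\sO_\sY) = \sO^\vir_{[\sW, F^\bullet]}.
\end{equation}
For part (2) the identical argument with \cite[Theorem 4.8]{Manolache} in place of \cite[Proposition 2.11]{QU} gives $[\sW^\vir, \tilde{E}^\bullet] = (f_1)^!_{F^\bullet}(f_2^*[\sZ])$; here the added hypothesis that $\sY$ and $\sZ$ are pure dimensional is exactly what guarantees $f_2^*[\sZ] = [\sY]$ (flat pull-back of the fundamental class along the smooth $f_2$), whence $[\sW^\vir, \tilde{E}^\bullet] = (f_1)^!_{F^\bullet}([\sY]) = [\sW^\vir, F^\bullet]$.

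The main obstacle I anticipate is not the computation but checking that the functoriality statements apply with $f_2$ merely smooth: its cotangent complex $\LL_{\sY/\sZ}$ sits in degrees $[0,1]$ and so is not a perfect obstruction theory of amplitude $[-1,0]$ in the strict sense. One must therefore invoke the smooth-case conventions (as in the proof of \propref{prop: pullbackvirtualclass}, where the smooth $p_\sX$ was handled via \cite[Remark 2.3]{QU} and \cite[Remark 3.10]{Manolache}), under which the virtual pull-back along a smooth morphism is declared to be the flat pull-back and the composition formula persists. The only point that genuinely uses \lemref{lem:inducedpotgeneralsetup} is that the connecting term in the compatibility triangle is exactly $f_1^*\LL_{\sY/\sZ}$, so that the obstruction theory chosen on $f_2$ is the tautological (smooth) one; everything else is formal.
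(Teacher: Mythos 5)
Your proof takes a genuinely different route from the paper's: the paper never invokes a virtual pull-back along $f_2$, but instead compares the two intrinsic normal cones directly, exhibiting (following \cite[Proposition 3.14]{BF} and \cite[Proposition 3]{KKP}) a cartesian square of cone stacks with $\mathfrak{C}_{\sW/\sY} \inj h^1/h^0(F^{\bullet \vee})$ on one side and $\mathfrak{C}_{\sW/\sZ} \inj h^1/h^0(\tilde{E}^{\bullet \vee})$ on the other, whose horizontal arrows are smooth; both equalities then follow from the definitions of the virtual classes and invariance of $\GG$-theory (resp.\ Chow groups) under flat pull-back. Your reduction to \cite[Proposition 2.11]{QU} and \cite[Theorem 4.8]{Manolache}, with the compatibility triangle you wrote down, is correct \emph{provided} $f_2$ is of $\DM$-type (for instance representable): then $\LL_{\sY/\sZ} \simeq \Omega_{\sY/\sZ}$ is concentrated in degree $0$, hence is an honest perfect obstruction theory of amplitude $[-1,0]$, and the cited functoriality and smooth-pull-back statements apply verbatim. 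This covers the application in Construction \ref{constr: equivariant}, hence \thmref{thm:main}.

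However, in the generality in which the lemma is stated --- and used --- there is a genuine gap, located exactly at the point you flagged and then dismissed. \lemref{lem:inducedpotgeneralsetup} assumes only that $f_2$ is smooth, not that it is of $\DM$-type, and the paper applies the present lemma in \propref{prop:Moritainducedperfectobstructiontheory} to the composition $[G \times W/G \times H] \to \BGH \to \BG$, where $f_2 = q_G$ has fibre $\BH$ with $H = Z_g$ of positive dimension; there $\LL_{\sY/\sZ}$ is concentrated in degree $+1$. In that case $f_2$ carries no perfect obstruction theory of amplitude $[-1,0]$ and admits no virtual pull-back in the frameworks of \cite{QU} and \cite{Manolache}, whose constructions and functoriality theorems are formulated for $\DM$-type morphisms (this is precisely why the proof of \propref{prop: pullbackvirtualclass} pauses to note that all morphisms there are representable); likewise \cite[Remark 2.3]{QU} and \cite[Remark 3.10]{Manolache} concern $\DM$-type smooth morphisms, where $\LL_{f} = \Omega_{f}$. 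So your key identity $(f_2 \circ f_1)^!_{\tilde{E}^\bullet} = (f_1)^!_{F^\bullet} \circ f_2^*$ is, for non-$\DM$-type smooth $f_2$, not a citation but the very statement to be proved: asserting that ``the composition formula persists'' assumes what is needed in the only case not already covered. The repair is to prove that compatibility at the level of cones, which is what the paper does: since $f_1$ and $f_2 \circ f_1$ are both of $\DM$-type, their intrinsic normal cones exist, smoothness of $f_2$ yields the cartesian square above, and both claims follow by flat pull-back invariance, with no reference to any virtual pull-back along $f_2$.
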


\begin{proof}
	The proof of both the claims are similar and $(2)$ is a straight forward generalization of \cite[Proposition 3]{KKP} to the relative case,
	and the proof follows directly using their techniques.  
	For the convenience of the reader we recall the proof in the case of $(1)$. 
	Arguing as in the proof of \cite[Proposition 3.14]{BF} and using \eqref{eq:POTEquivariant} we have the following 
	cartesian square of cone stacks:
	\begin{equation}
	\xymatrix
	{
		\mathfrak{C}_{\sW/\sY} \ar[r] \ar[d] & \mathfrak{C}_{\sW/\sZ } \ar[d] \\
		h^1/h^0(F^{\bullet \vee})      \ar[r]     &  h^1/h^0(\tilde{E}^{\bullet \vee}), 
	}
	\end{equation}
	where the bottom horizontal map is smooth. Since $h^1/h^0(F^{\bullet \vee})$ and $h^1/h^0(\tilde{E}^{\bullet \vee})$ are vector bundle stacks over 
	$\sW$, $(1)$ follows from \defref{defn:virtualstructuresheaf} and the functoriality of flat pull-back for $G$-theory. 
\end{proof}

\begin{remk}
Let $F^\bullet \to \LL_{\sW/\sY}$ and $\tilde{E}^\bullet \to \LL_{\sW/\sZ}$ be as in \lemref{lem:inducedpotgeneralsetup}.
Although $\tilde{E}^\bullet \to \LL_{\sW/\sZ}$ is not uniquely determined by $F^\bullet \to \LL_{\sW/\sY}$, 
\lemref{lem:equivalenceunderinduced} allows us to conclude that the virtual classes $\sO^\vir_{[\sW, \tilde{E}^\bullet]}$ and $[\sW^\vir, \tilde{E}^\bullet]$ are indeed 
uniquely determined by the construction in \lemref{lem:inducedpotgeneralsetup} 
as they coincide with $\sO^\vir_{[\sW, F^\bullet]}$ and $[\sW^\vir, F^\bullet]$ respectively.
\end{remk}

\subsection{Virtual equivariant Riemann-Roch formula} \label{sec:MainTheorem}
In this section we prove equivariant 
versions of virtual Riemann-Roch theorems. Throughout this section we
make the following assumptions regarding $G$ and $X \in \Sch_k^G$ which ensure that the algebraic space $X \times^G U$ associated
to good pairs $(V,U)$ are infact schemes. 

\begin{assmp} \label{assmp}
Let $X \in\Sch^G_k$ such that there is a $G$-equivariant closed immersion $X \inj M$ where $M \in \Sm^G_k$. 
Further assume that one of the following conditions holds:
\begin{enumerate}
\item $M$ is quasi-projective (and hence $M$ and $X$ are $G$-quasi-projective by \cite[Corollary 2.14]{Brion}), or
\item $G$ is a special linear algebraic group, or
\item $G$ is connected. 
\end{enumerate}
\end{assmp}

The fact that for any good pair $(V,U)$, the principal bundle quotient $X \times U \to X \times^G U$ exists in the category of schemes
under assumptions \ref{assmp}
follows from \cite[Proposition 23]{EG}. Recall that a $G$-equivariant map $f: X \to Y$ is said to be
$G$-quasi-projective if there exists a $G$-equivariant locally free $\sO_Y$-module $E$ on $Y$ 
and a quasi-compact $G$-equivariant immersion $X \inj \P (E)$ over $Y$. And $X \in \Sch^G_k$ is said to be $G$-quasi-projective
if $X \to \Spec k$ is a $G$-quasi-projective map.

\begin{thm}\label{thm:main} 
Let $X, Y \in \Sch^G_k$ and $\tilde{f}: X \to Y$ be a $G$-equivariant morphism such that $Y$ is smooth, 
$G$-equivariantly connected and pure dimensional. 
Suppose that there exists a $G$-equivariant closed immersion $X \inj M$ for some $M \in \Sm_k^G$ 
such that the assumptions in \ref{assmp} are satisfied.
    Then for an equivariant perfect relative obstruction theory $E^\bullet \to \LL_{X/Y}$ on $X$ with respect to $Y$ which admits a global resolution:
	\begin{equation}
	\tau^G_{X} (\sO^\vir_{[G, X, E^\bullet]}) = Td^G(T^\vir_{[X/Y, E^\bullet]}) \cap [G, X^\vir, E^\bullet]
	\end{equation}
	in $\CH_*^G(X)_\Q$, where $Td^G$ denotes the equivariant Todd class (cf. \cite[Definition 3.1]{EG1}).
\end{thm}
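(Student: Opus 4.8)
The plan is to reduce the equivariant statement to the non-equivariant virtual Grothendieck-Riemann-Roch formula of Fantechi-G\"ottsche (\cite[Proposition 3.4]{FG}) applied on the finite-dimensional approximations $X \times^G U$, and then to track how the various twists in the definition of $\tau^G_X$ interact with the virtual classes. First I would fix an integer $i$ and choose an $l$-dimensional good pair $(V,U)$ for $n-i$, so that $X \times^G U$ is a genuine scheme by Assumption~\ref{assmp} and \cite[Proposition 23]{EG}. By Construction~\ref{constr: equivariant}, the equivariant obstruction theory $E^\bullet \to \LL_{X/Y}$ descends to a perfect obstruction theory $\tilde{E}^\bullet \to \LL_{(X \times^G U)/\Spec k}$ on the scheme $X \times^G U$, fitting in the distinguished triangle \eqref{eqn:lemmainduced2}. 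To this scheme-level obstruction theory I can apply \cite[Proposition 3.4]{FG} directly.

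The next step is to compare virtual classes through the approximation. By Proposition~\ref{prop: pullbackvirtualclass}, the pullback $s_U = \mathrm{Inv}^G_{X \times U} \circ p_X^*$ carries $\sO^\vir_{[G,X,E^\bullet]}$ to $\sO^\vir_{[\sX \times_\BG \sU, F^\bullet]}$ and $[G,X^\vir,E^\bullet]$ to $[\sX \times_\BG \sU^\vir, F^\bullet]$, and by Lemma~\ref{lem:equivalenceunderinduced} these coincide with the virtual classes of $\tilde{E}^\bullet$ on $X \times^G U$. Thus, tracing through the defining diagram \eqref{eqn:equivariantRR}, I would compute
\begin{equation*}
(\tau^G_X)_i(\sO^\vir_{[G,X,E^\bullet]}) = \left(\frac{\tau_{X \times^G U}(\sO^\vir_{[X \times^G U, \tilde{E}^\bullet]})}{Td(X \times^G(U \times V))}\right)_{i+l-g}.
\end{equation*}
Applying the non-equivariant formula \cite[Proposition 3.4]{FG} to the numerator replaces $\tau_{X \times^G U}(\sO^\vir)$ by $Td(T^\vir_{[X \times^G U, \tilde{E}^\bullet]}) \cap [X \times^G U^\vir, \tilde{E}^\bullet]$.

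The heart of the argument, and what I expect to be the main obstacle, is the bookkeeping of Todd-class factors. I must show that dividing by $Td(X \times^G(U \times V))$ and using the virtual tangent bundle $T^\vir_{[X \times^G U, \tilde{E}^\bullet]}$ of the \emph{total} obstruction theory on $X \times^G U$ reassembles precisely into the equivariant Todd class $Td^G(T^\vir_{[X/Y, E^\bullet]})$ acting on $[G,X^\vir, E^\bullet]$. Concretely, the distinguished triangle \eqref{eqn:lemmainduced2} together with \eqref{eqn:formixed space} gives, in $\KK_0$, a decomposition of $T^\vir_{[X\times^G U,\tilde{E}^\bullet]}$ into the pullback of $[E^{\bullet\vee}]$, the tangent of $Y$, the fiber directions of $U$, and the $V$-factor bundle whose Todd class appears in the denominator; the contributions coming from $\fg$ (the Lie algebra, cf. Example~\ref{exm: eq. virtualtangentbundle}) and from $\Omega_{\sU/\BG}$ must cancel correctly against the approximation-dependent terms. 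The delicate point is that the factor $Td(X \times^G(U\times V))^{-1}$ is exactly what corrects the extra $V$-bundle introduced by passing to the mixed space, so that the surviving class is the image of the intrinsic equivariant virtual tangent bundle. Once this cancellation is verified degree-by-degree in the Chow grading, the equality holds in $\CH_{i+l-g}(X \times^G U)_\Q = \CH^G_i(X)_\Q$ for each $i$; since $i$ was arbitrary and the construction of $\tau^G_X$ is independent of the choice of good pair by \cite[Proposition 3.1]{EG1}, the formula \eqref{thm:main} follows in $\CH^G_*(X)_\Q$.
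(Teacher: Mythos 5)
Your proposal is correct and follows essentially the same route as the paper's proof: finite-dimensional approximation by $X \times^G U$ via Construction~\ref{constr: equivariant}, comparison of virtual classes through Proposition~\ref{prop: pullbackvirtualclass} and Lemma~\ref{lem:equivalenceunderinduced}, application of \cite[Proposition 3.4]{FG} on the approximation, and the Todd-class bookkeeping you flag as the main obstacle is exactly the paper's Step 2, where the factor $Td(X \times^G(U\times V))$ produced by the $\Omega_{\sU/\BG}$-summand of $F^\bullet$ cancels against the $Td(X \times^G(U\times V))^{-1}$ in the definition of $\tau^G_X$, while the $\fg$-term from $\LL_{\sY/\Spec k}^\vee \simeq [\fg_Y \to T_Y]^\vee$ is absorbed into $Td^G(T^\vir_{[X/Y,E^\bullet]})$. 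The one detail you should make explicit is that invoking \cite[Proposition 3.4]{FG} requires $X \times^G U$ to embed in a smooth scheme, which the paper gets from the assumed embedding $X \inj M$ inducing $X \times^G U \inj M \times^G U$ with $M \times^G U$ smooth.
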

\begin{proof}

Equivariant Chow groups of $X$ are by definition given by the Chow groups of the quotient scheme $X \times^G U$.
Recall from \secref{para:equivariantRR} that the equivariant Riemann-Roch map is defined by precomposing the  Riemann-Roch map for schemes 
with a flat pullback map on equivariant $G$-theory. 
To prove the theorem we first show that the virtual structure sheaf corresponding to the equivariant perfect relative obstruction theory
pulls back to the virtual structure sheaf corresponding to an absolute perfect obstruction theory on $X \times^G U$.
We then apply \cite[Proposition 3.4 (2)]{FG} to write the image of the virtual structure sheaf on $X \times^G U$ under the Riemann-Roch map
in terms of the virtual fundamental class and the Todd class of the virtual tangent bundle.
Finally we reinterpret these classes in terms of the equivariant classes corresponding to the given equivariant perfect relative obstruction theory.

Let $\sX := [X/G], \sY := [Y/G]$ and let $f: \sX \to \sY$ be the map induced by $\tilde{f}$.  
Let $d$ be the virtual dimension of $X$ with respect to the equivariant perfect relative obstruction theory $E^\bullet$, 
which implies that $[G, X^\vir, E^\bullet] \in \CH_{\dim(Y) + d}^G(X)$.  Let $(V,U)$ be an $l$-dimensional good pair for $\dim(X)-j$,
where $j := \dim(Y) + d$. 
 
 Step 1: Let $s_U: \GG_0(G,X)_\Q \to \GG_0(X \times^G U)_\Q$ be the map defined in \secref{para:equivariantRR}. 
 For $F^\bullet \to \LL_{X \times^G U/\sY}$
  the perfect relative obstruction theory defined in \eqref{eqn:formixed space},
 it follows from \propref{prop: pullbackvirtualclass} that
 \begin{equation}
s_U(\sO^\vir_{[G,X, E^\bullet]}]) = \sO^\vir_{[ X \times^G U, F^\bullet]}
 \end{equation}
 in $\GG_0(X \times^G U)$.
 
 Step 2: Next we compute the Todd class of $X \times^G U$ corresponding to the perfect obstruction theory $\tilde{E}^\bullet$ of 
 \eqref{eqn:lemmainduced} in terms of the equivariant Todd class $Td^G(T^\vir_{[X/Y, E^\bullet]})$. 
 By abuse of notation we let $Td^G(V)$ denote the Todd class $Td(X \times^G (U \times V)) = r^* Td^G(V)$, where
 $r: X \times U \to \Spec(k)$ is the structure map in $\Sch^G_k$.
 
 Consider the following:
 \begin{align*}\label{eqn:Toddcalculation}
 T^\vir_{[X\times^G U, \tilde{E}^\bullet]}
& =^1 Td(\tilde{E}^\bullet) \\ 
& =^2 Td(F^{\bullet \vee}) ~.~Td (p_\sX^* \circ f^* (L_{\sY/\Spec k}^\vee)) \\ 
 & =^3 p_X^*(Td^G(E^{\bullet \vee})) ~.~ p_U^*(Td^G(\Omega_U^\vee)) ~.~ p_X^* ((Td^G(\tilde{f}^* T_Y) ~.~Td^G(\fg_X)^{-1})\\ 
 & =^4 p_X^*(Td^G(T^\vir_{[X, E^\bullet]}) ~.~ Td^G(V),  
 \end{align*}
where $=^1$ follows from \defref{defn:virtualtangentbundle}
 and $=^2$ follows from the multiplicative property of $Td$ and the distinguished triangle 
 \eqref{eqn:lemmainduced2} in $\D^b(\Coh(X \times^G U))$. 
 $=^3$ follows from the definition of $F^\bullet$ (see \eqref{eqn:formixed space}), the definition of the equivariant Todd class $Td^G$
 and since the perfect complex $L_{\sY/\Spec k}^\vee$ on $\sY$ corresponds to the equivariant perfect complex $[\fg_Y \to T_Y]$ on $Y$. 
 The last equality, $=^4$ follows from  \defref{defn:virtualtangentbundle} (see Example \ref{exm: eq. virtualtangentbundle}).
 
 Step 3: Note that by definition $\CH_{j}^G(X) : =  \CH_{j+l-g} (X \times^G U)$. We next claim that:
 \begin{equation} \label{eqn:fundamentalclasschowgroupschange}
 [G, X^{\vir}, E^\bullet] = [X \times^{G} U^\vir, F^\bullet]
 \end{equation}
 in $\CH_{(j+l-g)} (X \times^{G} U)$.
 First note that if the virtual dimension of $X$ with respect to $E^\bullet$ is $d$ then the virtual dimension of $X \times_{G} U$ with respect to 
 $F^\bullet$ is $(l+d)$ (by construction \eqref{eqn:formixed space}), thus both the fundamental classes lie in $\CH_{(j+l-g)}(X \times_{G} U)$
 as $\dim(\sY) = \dim(Y) - g$. From the naturality of the isomorphism $\widehat{A}_* (~) \to A_* (~)$ for quotient stacks 
 (see \cite[Theorem 2.1.12]{Kresch}) it follows that the following diagram commutes, 
 
 \begin{equation}
 \xymatrix
 {
 	\CH^G_j(X) \ar[r]^>>>>>{p_X^*} \ar[d]^\simeq & \CH^G_{j+l} (X \times U) = \CH_{(j+l-g)}(X \times^G U)\ar[d]^\simeq \\
 	{A}_j (\sX)           \ar[r]^>>>>>>>>>>>>{p_\sX^*}          &  A_{j+d} ([(X \times U)/G]),
 }
 \end{equation}
 where the top right equality follows from \cite[Proposition 8(a)]{EG}. Now \eqref{eqn:fundamentalclasschowgroupschange} follows by combining \defref{defn:virtualfundamentalcclass} and \lemref{lem:equivalenceunderinduced}.   
 
 Step 4: By assumption \ref{assmp} the scheme $X$ has a  $G$-equivariant embedding in a smooth scheme $M$.  
This induces an embedding of schemes $X \times^G U \inj M \times^G U$ such that $M \times^G U \in \Sm_k$. 
Thus one can apply \cite[Proposition 3.4 (2)]{FG} to the perfect obstruction theory $\tilde{E}^\bullet$ on the scheme $X \times^G U$ to conclude: 
 \begin{equation}\label{eqn: changeofpot}
 \tau_{X \times^G U}([\sO^\vir_{[X \times^G U, \tilde{E}^\bullet]}]) =Td( T^\vir_{X\times^G U, \tilde{E}^\bullet}) \cap [X\times^G U^\vir, \tilde{E}^\bullet].
 \end{equation}
 Now we have the following:
 \begin{align}
\tau^G_{X} (\sO^\vir_{[G, X, E^\bullet]})  & =^1 Td^G(V)^{-1} \cap \tau_{X\times^G U}(\sO^\vir_{[X \times^G U, F^\bullet]}) \\ \nonumber
                                                               & =^2 Td^G(V)^{-1} \cap \tau_{X\times^G U} (\sO^\vir_{[X \times^G U, \tilde{E}^\bullet]}) \\ \nonumber
                                                               & =^3 Td^G(V)^{-1} ~.~ Td( T^\vir_{[X\times^G U, \tilde{E}^\bullet]}) \cap [X\times^G U^\vir, \tilde{E}^\bullet] 
                                                               \\ \nonumber
                                                               & =^4  
                                                               Td^G(V)^{-1}~.~ Td^G(T^\vir_{[X, E^\bullet]}) ~.~ Td^G(V) 
                                                               \cap  [X\times^G U^\vir, F^\bullet] \\ \nonumber
                                                               & =^5 Td^G(T^\vir_{[X, E^\bullet]}) \cap  [G,X^{\vir}, E^\bullet], \nonumber
 \end{align}
where $=^1$ follows from Step 1 and the definition of the equivariant Riemann-Roch map, 
$=^2$ follows from \lemref{lem:equivalenceunderinduced} $(2)$, 
$=^3$ follows from \eqref{eqn: changeofpot}, 
$=^4$ follows from Step 2 and finally 
$=^5$ follows from Step $3$. 
This completes the proof of the theorem. 
\end{proof}

Let $ch^G$ denote the equivariant chern character map of \cite[Definition 3.1]{EG1}).
Let $\pi: X \to \Spec k \in \Sch^G_k$ be a proper map, i.e. $X$ is a complete $G$-scheme.
Then for an equivariant perfect obstruction theory $E^\bullet \to \LL_{X}$,
the virtual equivariant Euler characteristic of $\alpha \in \KK_0(G,X)$
is defined as
$$
\chi^G_\vir(X, E^\bullet, \alpha) := \chi^G (\alpha \otimes \sO^\vir_{[G, X, E^\bullet]}),
$$
where $\chi^G(\beta)$ is defined as $ch^G(\pi_*(\beta))$ for $\beta \in \GG_0(G,X)$ (see \cite{EG1}).

\thmref{thm:main} allows us to prove  equivariant analogues of the virtual Riemann-Roch Theorems (see \cite[Theorem 3.5]{BF}).

\begin{cor}\label{cor:EVGRR}
Let $X$ be a $G$-scheme satisfying the assumptions in \ref{assmp} and
let $E^\bullet \to \LL_{X}$ be an equivariant perfect obstruction theory on $X$ which admits global resolution
and let $\alpha \in \KK_0(G, X)$.
\begin{enumerate}
\item{(Virtual Equivariant Grothendieck-Riemann-Roch).}
Let  $f: X  \rightarrow Y  \in  \Sch_k^G$ where $f$ is proper and $Y$ is smooth and $G$-quasi-projective.
Then:
		\begin{equation} \label{eq:vGRR}
	ch^G(f_*(\alpha \otimes \sO^\vir_{[G, X, E^\bullet]}) ) ~.~ Td^G(T_Y) \cap [Y] = \\ 
	f_*(ch^G(\alpha) ~.~ Td^G(T_{[X,E^\bullet]}^\vir) \cap [G, X^\vir,E^\bullet])
	\end{equation}
	in $\CH^G_*(X)_\Q$.\\
\item{(Virtual Equivariant Hirzebruch-Riemann-Roch).} If $X$ is a complete $G$-scheme, then:
\begin{equation} \label{eq:vHRR}
\chi^G_\vir(X, E^\bullet, \alpha) = \int_{[G,X^\vir, E^\bullet]} ch^G(\alpha) ~.~ Td^G(T^\vir_{[X, E^\bullet]}).
\end{equation}
\end{enumerate}
\end{cor}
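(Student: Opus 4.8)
The plan is to deduce Corollary~\ref{cor:EVGRR} formally from Theorem~\ref{thm:main} together with the three standard structural properties of the Edidin--Graham transformation $\tau^G$ established in \cite{EG1}: (i) the module (projection) formula $\tau^G_X(\alpha \otimes \beta) = ch^G(\alpha) \cap \tau^G_X(\beta)$ for $\alpha \in \KK_0(G,X)$ and $\beta \in \GG_0(G,X)$; (ii) covariance for proper morphisms, $f_* \circ \tau^G_X = \tau^G_Y \circ f_*$; and (iii) the fact that for smooth $Y$ one has $\tau^G_Y(\beta) = ch^G(\beta) ~.~ Td^G(T_Y) \cap [Y]$. Note that in part~(1) the obstruction theory is absolute, $E^\bullet \to \LL_X$, so Theorem~\ref{thm:main} is to be applied with its base taken to be $\Spec k$; the scheme $Y$ appearing in the corollary is a separate object, namely the target of the proper map $f$.

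For part~(1), I would set $\gamma := \alpha \otimes \sO^\vir_{[G,X,E^\bullet]} \in \GG_0(G,X)$ and compute $\tau^G_X(\gamma)$. Applying (i) and then Theorem~\ref{thm:main} (with base $\Spec k$, so that its relative virtual tangent bundle specializes to $T^\vir_{[X,E^\bullet]}$), I obtain
$$\tau^G_X(\gamma) = ch^G(\alpha) ~.~ Td^G(T^\vir_{[X,E^\bullet]}) \cap [G,X^\vir,E^\bullet].$$
Pushing forward along $f$ and using covariance (ii) gives $f_*\tau^G_X(\gamma) = \tau^G_Y(f_*\gamma)$, and applying the smooth-target formula (iii) to $Y$ rewrites the latter as $ch^G(f_*\gamma) ~.~ Td^G(T_Y) \cap [Y]$. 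Equating the two expressions for $\tau^G_Y(f_*\gamma)$ yields exactly \eqref{eq:vGRR}.

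For part~(2), I would specialize part~(1) to $Y = \Spec k$ and $f = \pi$ the structure map, which is proper precisely because $X$ is complete, and $\Spec k$ is trivially smooth and $G$-quasi-projective. Since $Td^G(T_{\Spec k}) = 1$ and pushforward to the point is integration against the virtual fundamental class, the right-hand side of \eqref{eq:vGRR} becomes $\int_{[G,X^\vir,E^\bullet]} ch^G(\alpha) ~.~ Td^G(T^\vir_{[X,E^\bullet]})$, while the left-hand side unwinds, via the definitions of $\chi^G$ and $\chi^G_\vir$ recalled above, to $\chi^G_\vir(X,E^\bullet,\alpha)$. This gives \eqref{eq:vHRR}.

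The derivation is essentially formal once Theorem~\ref{thm:main} is in hand; the only delicate points are bookkeeping. The main thing to confirm is that properties (i)--(iii) of $\tau^G$ hold equivariantly with the stated compatibilities (all three are in \cite{EG1}), and — for part~(2) — that the identification of $\pi_*$ with the integral $\int_{[G,X^\vir,E^\bullet]}$ and the normalization $Td^G(T_{\Spec k})=1$ are compatible with the conventions used to define $\chi^G_\vir$.
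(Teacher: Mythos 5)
Your proposal is correct and follows essentially the same route as the paper: apply Theorem~\ref{thm:main} (in its absolute form) together with the module property and proper covariance of $\tau^G$ from \cite[Theorem 3.1]{EG1}, then identify $\tau^G_Y$ on the smooth target via $\tau^G_Y([\sO_Y]) = Td^G(T_Y)\cap [Y]$ (the paper makes explicit here the use of $\KK_0(G,Y)\cong \GG_0(G,Y)$ for smooth $Y$, which is implicit in your property~(iii)), and deduce part~(2) by specializing to $Y=\Spec k$.
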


\begin{proof}
The proof of \eqref{eq:vGRR} is identical to \cite[Theorem 3.5]{FG} using 
the properties of the equivariant Riemann-Roch map given in \cite[Theorem 3.1]{EG1}.
By the module property of $\tau^G$ given by Theorem 3.1(c) and the covariance with respect to equivariant proper maps 
of \cite[Theorem 3.1(b)]{EG1}:
$$f_*(ch^G(\alpha) ~.~ Td^G(T_{[X,E^\bullet]}^\vir) \cap [G, X^\vir,E^\bullet]) = \tau^G_Y(f_*(\alpha \otimes \sO^\vir_{[G, X, E^\bullet]}) ).$$
Again by the module property of $\tau^G$ and since $\KK_0(G,Y) \to \GG_0(G,Y)$ is an isomorphism for $Y \in \Sm^G_k$ (cf. \cite[Theorem 5.7]{Thomason}):
$$\tau^G_Y(f_*(\alpha \otimes \sO^\vir_{[G, X, E^\bullet]}) ) = ch^G(f_*(\alpha \otimes \sO^\vir_{[G, X, E^\bullet]} )) ~.~ \tau^G_Y([\sO_Y]).$$
Now for $g: Y \to Spec k$ smooth and $G$-quasi-projective,  using \cite[Theorem 3.1(d)(ii)]{EG1} we have
\begin{align*}
\tau^G_Y([\sO_Y]) &= Td^G(T_Y) \cap g^* \tau^G_{\Spec k}([\sO_{\Spec k}]) \\
                          	  & = Td^G(T_Y) \cap [Y].
\end{align*}
This proves (1). (2) is a special case of (1) for the morphism $X \to \Spec k$.
\end{proof}

\section{Virtual structure sheaf and the Atiyah-Segal map} \label{Sec:Atiyah-Segal}
In this section we work over the base field $\C$
and we consider algebraic $K$-theory and Chow groups with complex coefficients;
but for simplicity of notation we shorten $\GG_0(G,X) \otimes_\Z \C$ as $\GG_0(G,X)$
and $\CH_*^G(X)  \otimes_\Z \C$ as $\CH_*^G(X)$.
We recall the construction of the Atiyah-Segal map in Section \ref{sec:Atiyah-Segal-map} and study the 
image of the virtual structure sheaf under this map for a given equivariant perfect obstruction theory in Section \ref{sec:AS-Virtual sheaf}.

\subsection{Atiyah-Segal isomorphism} \label{sec:Atiyah-Segal-map}
Let $X \in \Sch_\C^G$ such that $G$ acts properly on $X$.  
For $X$ smooth the Atiyah-Segal map was defined by Edidin-Graham as a part of \cite[Theorem 6.7]{EG2} 
and for general $X$ in \cite[Theorem 6.5]{EG4}. 
This was extended to higher $K$-theory in \cite{KS20}. 
We recall here briefly the construction of the map for $\GG_0$ from the works of Edidin-Graham \cite{EG2} and \cite{EG4}
without getting into the technical details. We refer to their papers for proof of the non-trivial statements which are stated as facts below,
but are in fact quite involved.
The purpose of this section is to set notations and give an outline of the
definitions which would aid in the proof of   \thmref{thm:Virtual-sh-AS}.

For a linear algebraic group $G$, 
let $R(G)$ denote the representation ring of $G$, 
which in our notation is $\KK_0(G, \Spec \C)$.
There is a bijection between the semi-simple conjugacy classes in $G$ and the
maximal ideals in $R(G)$ {\color{black} (see \cite[Proposition 2.5]{EG2}) }.
Given a semi-simple conjugacy class $\psi_i$ we denote by $\fm_{\psi_i}$ the corresponding maximal ideal in $R(G)$ under this bijection. 
The maximal ideal $\fm_1 \subset R(G)$,
defined as the kernel of the rank map $R(G)  \surj \C$, is called the augmentation ideal of $G$.

Recall that $G$ is said to act properly on $X$, if the map $(m, pr_X): G \times X \rightarrow X \times X$ is proper,
where $m: G \times X \to X$ defines the group action on $X$.  
Let $X$ be a $\C$-scheme with proper $G$-action.
{\bl The algebraic analogue of the Atiyah-Segal theorem from topology
provides an isomorphism between the  localization of the equivariant
algebraic $K$-theory of $X$ at a given maximal ideal  with the localization of a corresponding component of the  inertia scheme $I_X$ at the augmentation ideal}.
We remark that in the case of proper actions, the equivariant $K$-theory is supported at a finite number of maximal ideals of $R(G)$,
therefore the localization and completion of $\GG_0(G,X)$ at maximal ideals coincide, which justifies the 
comparison of the map at the level localizations of the $K$-theory to the classical topological Atiyah-Segal map.
Recall that the inertia scheme $I_X$ is the $G$-scheme defined as the pullback in the following square of $G$-schemes: 
\begin{equation}
	\xymatrix
	{
	I_X \ar[r] \ar[d] & X \ar[d]^{\Delta} \\
	G \times X \ar[r]_{(m, pr_X)} & X \times X.
}
\end{equation}
The closed points of $I_X$ correspond to pairs $(g,x)$ such that $g \cdot x = x$. 

Since the $G$-action on $X$ is proper,
there is a finite set of
semi-simple conjugacy classes 
$\Sigma^G_X = \{\psi_1,\cdots, \psi_n\}$
in $G$ such that the fixed point locus $X^g \subset X$ is non-empty
if and only if $g \in \psi_i$ for some $\psi_i \in \Sigma^G_X$.
Moreover, the natural map given by localizations at maximal ideals
$$\GG_0(G,X) \xrightarrow{\simeq} \bigoplus_{\psi \in \Sigma^G_X} \GG_0(G,X)_{\fm_\psi}$$
is an isomorphism of $R(G)$-modules {\bl (see \cite[Proposition 3.6]{EG2})}.
Also, in this case the inertia scheme has a $G$-equivariant decomposition as $I_X = \coprod_{\psi \in \Sigma^G_X} I^{\psi}_X$, 
where $I^\psi_X$ corresponds to the set of closed points $(g,x)$ of $I_X$ such that $g \in \psi$. 

\subsubsection{The Atiyah-Segal map} \label{sec:AS-map}
{\bl In this section we recall the definition of the Atiyah-Segal map from \cite[Theorem 6.5]{EG4} closely following the presentation of \cite[\S~6.3]{KS20}}. Let $g \in G$ be such that $X^g \neq \emptyset$ and let $\psi \in \Sigma^G_X$ denote the corresponding semi-simple conjugacy class. 
Let $Z_g$ denote centralizer of $g$ in $G$. Let $\fm_g \subset R(Z_g)$ denote the maximal ideal of $Z_g$ corresponding
to the conjugacy class $\psi \cap Z_g = \{g\}$ of $Z_g$.
Let
$\vartheta^{\psi}_X: \GG_0(G,X)_{\fm_\psi} \to \GG_0(G, I^\psi_X)_{\fm_1}$ be defined as the composite map:
\begin{equation}\label{eqn:Atiyahsegal}
	\GG_0(G,X)_{\fm_\psi} \xrightarrow{(i^g_!)^{-1}} \GG_0(Z_g, X^g)_{\fm_g} \xrightarrow{t_{g}^{-1}} \GG_0(Z_g, X^g)_{\fm_1} 
	\xrightarrow{\mu^g_*} \GG_0(G,I^\psi_X)_{\fm_1},  
\end{equation}
where the maps are as defined below. 

 {\it The map $i^g_!$}.  
	As $i^g: X^g \inj X$ is a $Z_g$-equivariant closed immersion, there is a natural pushforward map 
	$i^g_* : \GG_0(Z_g, X^g) \to \GG_0(Z_g, X)$ and it follows from \cite[Theorem 5.1]{EG4} that 
	$i^g_*$ is an isomorphism after localization at $\fm_g$. 
	The restriction of groups defines an exact functor from $\Coh^G(X) \to \Coh^{Z_g}(X)$, which induces a map
	$\res^{G}_{Z_g}: \GG_0(G,X) \to \GG_0(Z_g, X)$ and it  follows from \cite[Corollary 5.5(b)]{EG4} that on localizations,
	the restriction map 
	$\res^{G}_{Z_g}: \GG_0(G, X )_{\fm_{\psi}} \to \GG_0(\sZ_g, X)_{\fm_g}$ is again an isomorphism. 
	One defines $i^g_!$ as the composition $(\res^G_{Z_g})^{-1} \circ i^g_*$.  
	
 {\it The twisting map $t_g$}. 
 Let $P = <g>$ denote the central cyclic subgroup of $Z_g$ generated by $g$. 
As $G$ acts properly on $X$, $g$ is an element of finite order. 
Then $P$ is a finite abelian group and is therefore semi-simple, let $\widehat{P}$ denote the group of characters of $P$. 
Now as $g$ acts trivially on $X^g$ it follows that for any $Z_g$-equivariant coherent sheaf $\sF$ on $X^g$,
we have an eigensheaf decomposition $\sF = \bigoplus_{\chi \in \widehat{P} } \sF_{\chi}$, 
where each $\sF_{\chi} \in \Coh^{Z_g}(X^g)$.
{\bl Now following \cite[\S~6.2]{EG2}, one defines an action of $P$ on $\GG_0(Z_g, X^g)$ as follows.} For any $\sF \in \Coh^{Z_g}(X^g)$,
let $[\sF] \in \GG_0(Z_g, X^g)$ denote its corresponding class. 
For each $a \in P$ one defines $t_a([\sF]) = \bigoplus_{\chi \in \widehat{P} } \chi(a^{-1}) \sF_{\chi}$, 
which extends linearly to define an automorphism of $\GG_0(Z_g, X^g)$.  
On $R(Z_g)$, $t_{g}^{-1}(\fm_g) = \fm_1$, hence it follows that on restricting to localizations we have an isomorphism
$t_{g}^{-1}: \GG_0(Z_g, X^g)_{\fm_g} \rightarrow \GG_0(Z_g, X^g)_{\fm_1}$.

 {\it Morita isomorphism $\mu^g_*$}. 
The map $\mu^g_*: \GG_0(\sZ_g, X^g)_{\fm_1} \to \GG_0(G, I^\psi_X)_{\fm_1}$ is the Morita isomorphism discussed below in 
Example \ref{exm:Morita exm} and  since
$\mu^g_*$ is $R(G)$-linear (see \cite[Remark 3.2]{EG2}) where the $R(G)$-module structure on
$\GG_0(Z_g, X^g)$ is given by the restriction $R(G) \to R(Z_g)$,
it follows that it induces an isomorphism $\GG_0(Z_g, X^g)_{\fm_1} \to \GG_0(G, I^\psi_X)_{\fm_1}$ 
(where the localization on the left is with respect to the augmentation ideal in $R(Z_g)$ 
and the localization on the right is with respect to the augmentation ideal in $R(G)$). 

Note that all the maps in \eqref{eqn:Atiyahsegal} are isomorphisms and therefore the map 
$\vartheta^{\psi}_X$ is an isomorphism for each $\psi \in \Sigma^G_X$. 

\begin{remk} \label{remk:AS-map-indep-of-g}
{\bl Although apriori the definition of the map $\vartheta^{\psi}_X$ depends on the choice of a $g \in \psi$,
it follows from { \cite[Lemma 6.5]{EG2}} (or \cite[Lemma 6.7]{KS20}) that the map is indeed independent of this choice, i.e., 
the composite maps }
$\mu^g_* \circ t_{g}^{-1} \circ (i^g_!)^{-1}$ and 
$\mu^h_* \circ t_{h}^{-1} \circ (i^h_!)^{-1}$ 
coincide if $g, h \in \psi$. This is  proved  by showing that the following diagram commutes:
\begin{equation}\label{eqn:Indep-0}
\xymatrix@C1pc{
\GG_0(G,X)_{\fm_{\psi}} \ar[r]^-{(i^g_!)^{-1}} \ar[d]_{Id} &  
\GG_0(Z_g,X^g)_{\fm_g} \ar[r]^-{t_{g}^{-1}} \ar[d]^{\theta_{X_*}} & \GG_0(Z_g,X^g)_{\fm_1} 
\ar[r]^-{\mu^g_*} \ar[d]^{\theta_{X_*}} &  \GG_0(G,I^{\psi}_X)_{\fm_1} \ar[d]^{Id}  \\
\GG_0(G,X)_{\fm_{\psi}} \ar[r]^-{(i^h_!)^{-1} } & \GG_0(Z_h,X^h)_{\fm_h} 
\ar[r]^-{t_{h}^{-1}} & \GG_0(Z_h,X^h)_{\fm_1} \ar[r]^-{\mu^h_*} & 
\GG_0(G,I^{\psi}_X)_{\fm_1},}
\end{equation}
where $\theta_X$ is an isomorphism between the stacks
$[X^g/Z_g]$ and $[X^h/Z_h]$ 
which is constructed as follows.
Since $g,h$ are in the same conjugacy class, 
there exists a $l \in G$ such that $h = lgl^{-1}$. We have isomorphisms
between the centralizers $Z_g$ and $Z_h$ given by
$\phi: Z_g \xrightarrow{\simeq} Z_h ; ~~ \phi (z) = lzl^{-1}$
and between the fixed point loci $u: X^g \to X^h$
given by $x \mapsto lx$. 
If we let $Z_g$ act on $X^h$ via the canonical action of $Z_h$ 
composed with $\phi$, then $u$ is $Z_g$-equivariant.
Therefore we have an isomorphism of stacks $\theta_X: [X^g/Z_g] \to [X^h/Z_h]$
defined as the composite
\begin{equation}\label{diag:defnofphi_*0} 
[X^g/Z_g] \xrightarrow[u]{\simeq} [X^h/Z_g] \xrightarrow[\phi]{\simeq} [X^h/Z_h].
\end{equation}

\end{remk}

\begin{defn}\label{defn:Atiyah-Segal-map}
Let $G$ act properly on $X$.
The Atiyah-Segal map 
$\vartheta^G_X: \GG_0(G,X) \to \GG_0(G,I_X)_{\fm_1}$
is defined to be the composite isomorphism
\begin{equation} \label{eqn:Atiyahsegal-2}
\GG_0(G,X) \xrightarrow{\simeq} \oplus_{\psi \in \Sigma^G_X} 
\GG_0(G, X)_{\fm_{\psi}} 
\xrightarrow{\oplus {\vartheta^{\psi}_X}}
\oplus_{\psi \in \Sigma^G_X} \GG_0(G,I^{\psi}_X)_{\fm_1} 
\xrightarrow{\simeq} \GG_0(G,I_X)_{\fm_1}.
\end{equation}
\end{defn}

\subsection{Morita equivalence}\label{sec:Morita}
Let us begin by briefly recalling the Morita isomorphism in our context. 
Note that this works in general over any base.
Let $G$ be a linear algebraic group and let $H \subset G$ be a closed subgroup. 
Let $W$ be an $H$-scheme,  one can consider $ G \times W$ as a $G \times H$-scheme, 
where the action is given by $((g,h),(g',w)) \mapsto (gg'h^{-1}, hw)$ where $(g,h) \in G \times H$ 
and $(g', w) \in G \times W$.  
Both the $G$-action and the $H$-action on $G \times W$ are free. 
Let $Y := G \times^H W$ denote the geometric quotient under the $H$-action (which exists as an algebraic space). 
We briefly recall the  Morita isomorphism which provides an isomorphism of  stacks 
$\mu: [W/H] \xrightarrow{\simeq} [Y/G]$.  Consider the following, 

\begin{equation}
\xymatrix
{
W  & G \times W \ar[r]^{\tilde{\pi}_H} \ar[l]_{\tilde{\pi}_G}  &  Y ,
}
\end{equation}
where $\tilde{\pi}_G$ and $\tilde{\pi}_H$ denote the quotient by the actions of $G \times 1$ and $1 \times H$, respectively. 
As the actions are free it follows that $\tilde{\pi_G}$ is a principal $G$-bundle and $\tilde{\pi}_{H}$ is a principal $H$-bundle. 
It is easy to see that this induces an isomorphism of stacks (see for e.g. \cite[Lemma 2.3]{KS20}):
\begin{equation}\label{eq: moritadefinition}
	[W/H]  \xleftarrow[\pi_G]{\simeq} [G \times W/ G \times H] \xrightarrow[\pi_H]{\simeq} [Y/G].
\end{equation} 
We let $\mu = \pi_H \circ \pi_G^{-1}: [W/H] \to  [Y/G]$ denote the Morita isomorphism.

\begin{prop}\label{prop:Moritainducedperfectobstructiontheory}
	Let notations be as above and let 
 $\phi: F^\bullet \to \LL_{W/k}$ be an $H$-equivariant perfect obstruction theory on $W$. 
 Then we can naturally define a $G$-equivariant perfect obstruction theory,  $\tilde{E}^\bullet \to \LL_{Y/k}$ on $Y$. 
 Further, under the Morita isomorphism $\mu_*: \GG_0(H, W)  \to  \GG_0(G,Y)$, 
 we have $\mu_*(\sO^\vir_{[H, W, F^\bullet]}) = \sO^\vir_{[G, Y, \tilde{E}^\bullet]}$.
\end{prop}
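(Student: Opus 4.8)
The plan is to realize the passage from the $H$-equivariant obstruction theory on $W$ to the $G$-equivariant one on $Y$ as an instance of the descent procedure of \lemref{lem:inducedpotgeneralsetup}, transported across the Morita isomorphism, and then to match the virtual structure sheaves by \lemref{lem:equivalenceunderinduced}. By \defref{defn:eq. perfectobstructiontheory}, the datum $\phi: F^\bullet \to \LL_{W/k}$ is the same as a perfect relative obstruction theory $\hat F^\bullet \to \LL_{[W/H]/\BH}$ on $[W/H]$ relative to its structure map $p\colon [W/H]\to\BH$, and a $G$-equivariant obstruction theory on $Y$ relative to $k$ is the same as a perfect relative obstruction theory on $[Y/G]$ relative to $q\colon[Y/G]\to\BG$. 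Under the equivalences $\GG_0([W/H])\simeq\GG_0(H,W)$ and $\GG_0([Y/G])\simeq\GG_0(G,Y)$ the Morita pushforward $\mu_*$ is identified with the map in the statement, so I would work entirely on the quotient stacks.

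The key geometric input, which I expect to be the main obstacle, is that the Morita isomorphism $\mu\colon[W/H]\xrightarrow{\simeq}[Y/G]$ is compatible with the structure maps to the classifying stacks over the smooth map $B\iota\colon\BH\to\BG$ induced by $H\hookrightarrow G$, i.e.\ that $q\circ\mu=(B\iota)\circ p$. I would verify this using the intermediate stack $\sQ:=[(G\times W)/(G\times H)]$ and the isomorphisms $\pi_G\colon\sQ\simeq[W/H]$, $\pi_H\colon\sQ\simeq[Y/G]$ of \eqref{eq: moritadefinition}: the structure map $\sQ\to\BGH=\BG\times\BH$ realizes $p$ and $q$ as the two projections, and the desired identity reduces to the statement that the $G$-torsor on $\sQ$ classified by the homomorphism $(g,h)\mapsto g$ agrees with the one classified by $(g,h)\mapsto \iota(h)$. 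This isomorphism is exhibited explicitly by $((g',w),a)\mapsto((g',w),g'^{-1}a)$, and is the standard compatibility built into the Morita formalism (cf.\ \cite{KS20}).

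With this in hand, I would construct $\tilde E^\bullet$ by applying \lemref{lem:inducedpotgeneralsetup} to the chain $[W/H]\xrightarrow{p}\BH\xrightarrow{B\iota}\BG$. Here $\BG$ is smooth, $B\iota$ is smooth of relative dimension $\dim(G/H)$, the map $p$ is representable and hence of $\DM$-type, and $(B\iota)\circ p$ is of $\DM$-type because $H$ acts freely on $G$, so that $[W/H]\times_{\BG}\Spec k\simeq[(W\times G)/H]$ is an algebraic space. The lemma yields a perfect obstruction theory $\hat E^\bullet\to\LL_{[W/H]/\BG}$ relative to $(B\iota)\circ p=q\circ\mu$; transporting across $\mu$ gives $\mu_*\hat E^\bullet\to\LL_{[Y/G]/\BG}$, which is precisely the sought $G$-equivariant obstruction theory $\tilde E^\bullet\to\LL_{Y/k}$ on $Y$. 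Although $\hat E^\bullet$ is not unique, the remark following \lemref{lem:equivalenceunderinduced} guarantees its virtual classes are well defined.

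Finally, to compare virtual structure sheaves, I would apply \lemref{lem:equivalenceunderinduced}(1) to $[W/H]\to\BH\to\BG$ (both $\BH$ and $\BG$ being pure dimensional), obtaining $\sO^\vir_{[H,W,F^\bullet]}=\sO^\vir_{[[W/H],\hat E^\bullet]}$ in $\GG_0([W/H])$. Since $\mu$ is an isomorphism of stacks carrying the structure map to $\BG$ and the obstruction theory from $([W/H],q\circ\mu,\hat E^\bullet)$ to $([Y/G],q,\mu_*\hat E^\bullet)$, it carries the intrinsic normal cone, the vector bundle stack $h^1/h^0((-)^\vee)$, and hence the class defining $\sO^\vir$ in \defref{defn:virtualstructuresheaf} functorially, so $\mu_*\sO^\vir_{[[W/H],\hat E^\bullet]}=\sO^\vir_{[G,Y,\tilde E^\bullet]}$. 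Combining the two equalities gives $\mu_*(\sO^\vir_{[H,W,F^\bullet]})=\sO^\vir_{[G,Y,\tilde E^\bullet]}$. The only genuinely non-formal step is the compatibility $q\circ\mu=(B\iota)\circ p$; once it is established, the construction and the identity follow from the two lemmas together with the invariance of the virtual structure sheaf under the stack isomorphism $\mu$.
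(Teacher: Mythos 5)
Your proposal is correct, and it takes a genuinely different (and somewhat shorter) route than the paper's proof. The paper works on the intermediate stack $[(G \times W)/(G \times H)]$ throughout: it first builds an auxiliary $G\times H$-equivariant obstruction theory $F_1^\bullet = p_G^*\LL_{i_1}\oplus \pi_G^*F^\bullet \to \LL_{t_1}$ relative to $\BGH$, applies \lemref{lem:inducedpotgeneralsetup} to the chain $[(G\times W)/(G\times H)] \to \BGH \to \BG$, and transports the result to $[Y/G]$ via $\pi_H$; the comparison of virtual structure sheaves then requires two applications of \lemref{lem:equivalenceunderinduced} (once to pass from $\pi_G^*F^\bullet$ to $F_1^\bullet$, using a splitting of the triangle $\LL_{q_H\circ t_1}\to \LL_{t_1}\to t_1^*\LL_{q_H}[1]$, and once to pass from $F_1^\bullet$ to the descended theory). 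You instead apply \lemref{lem:inducedpotgeneralsetup} directly to $[W/H]\to\BH\to\BG$ and transport across $\mu$, which eliminates the auxiliary theory $F_1^\bullet$, the splitting argument, and one application of \lemref{lem:equivalenceunderinduced}. The price is exactly the step you identified: the $2$-compatibility $q\circ\mu\simeq (B\iota)\circ p$ over $\BG$, which the paper never needs because on $[(G\times W)/(G\times H)]$ both structure maps (to $\BH$ and to $\BG$, via the two projections from $\BGH$) are tautologically available. Your verification of this compatibility is correct: the two composites classify the $G$-torsors obtained from the $G\times H$-torsor $G\times W$ by extension of structure group along $(g,h)\mapsto g$ and $(g,h)\mapsto \iota(h)$ respectively, and the map $((g',w),a)\mapsto ((g',w),g'^{-1}a)$ is indeed equivariant for both the $G\times H$-action and the right $G$-torsor structure, hence descends to the required isomorphism. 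Two further points in your favor: $B\iota\colon \BH\to\BG$ is in fact representable (its fiber over $\Spec k\to\BG$ is the smooth quasi-projective scheme $G/H$), so the $\DM$-type and smoothness hypotheses of \lemref{lem:inducedpotgeneralsetup} are immediate, whereas the paper's $q_G\colon\BGH\to\BG$ is not representable and the paper must route representability through the isomorphism $\pi_H$; and your appeal to the remark after \lemref{lem:equivalenceunderinduced} correctly handles the non-uniqueness of the descended theory, since the two constructions may well produce different complexes $\tilde{E}^\bullet$ but necessarily the same virtual structure sheaf.
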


\begin{proof}
We first construct a $G$-equivariant perfect obstruction theory on $Y$.
Consider the following commutative diagram of Tor-independent, Cartesian squares:
\begin{equation}\label{eqn:Morita1}
	\xymatrix
	{
	[G \times W/G \times H] \ar[r]^-{p_W} \ar[d]_{p_G}  \ar@/^2pc/[rr]^{\pi_G}         \ar[rd]_{t_1}     & [W/ G \times H] \ar[r]^-{j_1} \ar[d]^{s} & 
	[W/H] \ar[d]^{t_2}  \\
	[G/ G \times H]                \ar[r]_-{i_1}                    &  \BGH              \ar[r]_-{q_H}                       & \BH, 
}
\end{equation}
where $p_W$, $p_G$ and $q_H$ are induced by the projection maps from $G \times W$ to $W$ and $G$ and $G \times H$ to $H$, respectively. 
The map $i_1$ is induced by the structure map $G \to \Spec k$, where $G \times H$ acts on $G$ by $(g,h).g' = gg'h^{-1}$.
The maps
$s$, $t_1$ and $t_2$ are also induced by the structure maps to $\Spec k$ and $j_1$ is induced by the  quotient map
as $W$ has a trivial $G$-action.

We have a $G \times H$-equivariant perfect obstruction theory on $G \times W$ defined by
$$\psi: F_1^\bullet :=   p_G^* \LL_{i_1}  \oplus \pi_G^* F^\bullet \xrightarrow{(id,\pi_G^*(\phi))}  
                                                                        p_G^* \LL_{i_1} \oplus p_W^* \LL_s \simeq \LL_{t_1}.$$ 

Now consider the following commutative diagram:

\begin{equation}\label{eqn:Morita2}
	\xymatrix
	{
	[G \times W/ G \times H] \ar[r]^-{\pi_H} \ar[d]_{t_1} & [Y/G]  \ar[d]^{t_3}\\
	\BGH  \ar[r]_-{q_G}                                  & \BG,
}
\end{equation} 
where $t_3$ is induced by the structure map to $\Spec k$ and $q_G$ denotes the map from $\BGH \to \BG$ induced by the 
group homomorphism $G \times H \to G$ given by projection
(which is not in general representable). 
Since  $\pi_H$ is an isomorphism, the composite map $t_3 \circ \pi_H$ is  representable. 
Applying \lemref{lem:inducedpotgeneralsetup} to the composition of maps $[G \times W/ G \times H] \to \BGH \to \BG$, 
we obtain a a perfect obstruction theory $G_1^\bullet \to \LL_{(t_3 \circ \pi_H)}$. 
Since $\pi_H^*$ is an isomorphism, we get a perfect obstruction theory 
$\tilde{E}^\bullet \to \LL_{t_3}$ which under $\pi_H^*$ gives $G_1^\bullet \to \LL_{(t_3 \circ \pi_H)}$.
Now $\tilde{E}^\bullet \to \LL_{t_3}$ is the required $G$-equivariant perfect obstruction theory on $Y$. 

To complete the proof  we need to show that $\mu_*$ respects the virtual sheaves, which reduces to showing $\pi_H^*(\sO^\vir_{[[W/H], F^\bullet]}) = \pi_G^*(\sO^\vir_{[[Y/G], \tilde{E}^\bullet]})$ 
in $\GG_0(G \times H, G \times W)$. 
Consider the following commutative diagram obtained from \eqref{eqn:Morita1}:

\begin{equation} \label{eqn:Morita3}
\xymatrix
{
[G \times W/ G \times H] \ar[r]^-{\pi_G} \ar[d]_{t_1} & [W/H] \ar[d]^{t_2} \\
\BGH \ar[r]_-{q_H} & \BH,.
}
\end{equation}
Since $\pi_G$ is an isomorphism, as before $F^\bullet \to \LL_{t_2}$ 
defines a relative perfect obstruction theory  
$\pi_G^*(\phi) : \pi_G^*F^\bullet \to \LL_{t_2 \circ \pi_G}$. 
The composite map
$[G \times W / G \times H] \xrightarrow{t_1} \BGH \xrightarrow{q_H} \BH $
induces a distinguished triangle of cotangent complexes in $\D^b(\Coh([G \times W/ G \times H]))$: 
$$
\LL_{q_H \circ t_1} \rightarrow \LL_{t_1} \xrightarrow{+1} t_1^* \LL_{q_H} [1] \rightarrow,
$$
and the map $\LL_{t_1} \xrightarrow{+1} t_1^* \LL_{q_H} [1]$ is split by the natural map of contangent complexes
$t_1^* \LL_{q_H} [1] = p_G^* i_1^* \LL_{q_H} [1] \xrightarrow{u^{-1}} p_G^*\LL_{i_1} \rightarrow \LL_{i_1 \circ p_G} = \LL_{t_1}$,
where $p_G^*\LL_{i_1} \xrightarrow[u]{+1} p_G^* i_1^* \LL_{q_H} [1]$ is an isomorphism since 
$q_H \circ i_1$ is an isomorphism.
Therefore we have a commutative  diagram of distinguished triangles on  $[G \times W/G \times H]$:
\begin{equation}
\xymatrix{
\pi_G^*F^\bullet   \ar[r] \ar[d]^{\pi_G^*(\phi)} & F^\bullet_1 \ar[r] \ar[d]^{\psi}  & p_G^* \LL_{i_1} \ar[r] \ar[d]^{u} & \\
\LL_{q_H \circ t_1}     \ar[r]    &  \LL_{t_1}   \ar[r] & p_G^* i_1^* \LL_{q_H} [1]    \ar[r]&.
}
\end{equation}
It follows from \lemref{lem:equivalenceunderinduced} that
\begin{equation}\label{eq:Morita4}
\sO^\vir_{[[G \times W/ G \times H], F_1^\bullet]} = \sO^\vir_{[[G \times W/ G \times H], \pi_G^*F^\bullet]}
\end{equation}
in $\GG_0([G \times W/ G \times H])$.  Next consider the composite map $[G \times W / G \times H] \xrightarrow{\pi_G} [W/H] \xrightarrow{t_3} \BH$. 
As $\pi_G$ is an isomorphism, it follows from definitions that 
$\sO^\vir_{[[G \times W/ G \times H], \pi_G^*F^\bullet]} = \pi_G^* (\sO^\vir_{[[W/H], F^\bullet]})$
in $\GG_0([G \times W/ G \times H])$.  Therefore from \eqref{eq:Morita4} we can conclude that:
\begin{equation}\label{eqn:Morita5}
\pi_G^* (\sO^\vir_{[[W/H], F^\bullet]} )= \sO^\vir_{[[G \times W/ G \times H], F_1^\bullet]}.
\end{equation}
As $G_1^\bullet \to \LL_{t_2 \circ \pi_H}$ is induced from  \lemref{lem:inducedpotgeneralsetup} applied to the perfect obstruction theory 
$F_1^\bullet \to \LL_{t_1}$ it follows from \lemref{lem:equivalenceunderinduced} that:
\begin{equation}\label{eqn:Morita6}
\sO^\vir_{[[Y/G], G_1^\bullet]} = \sO^\vir_{[[G \times W/ G \times H], F_1^\bullet]}. 
\end{equation}
It then follows immediately that 
\begin{align*}
\pi_H^* (\sO^\vir_{[[Y/G], \tilde{E}^\bullet]} ) &= \sO^\vir_{[G \times W/ G \times H], \pi_H^*\tilde{E}^\bullet]} \\
                        &=  \sO^\vir_{[G \times W/ G \times H], G_1^\bullet]}.                         
 \end{align*}
The proposition now follows by comparing the above equality with \eqref{eqn:Morita6} and \eqref{eqn:Morita5}.
\end{proof}

\begin{exm} \cite[Section 4.1]{EG2}, \cite[Section 5.4]{EG4} \label{exm:Morita exm}
Let $g \in G$ be a semi-simple element of $G$ and let us denote its conjugacy class by $\psi$ and let the centralizer of $g$ in $G$ be denoted by $Z_g$. 
Let $X \in \Sch^G_{\C}$ be a scheme with proper action.
Then the natural map $G \times X^g \to I^\psi_X$ is a principal $Z_g$-bundle 
which induces an isomorphism of stacks $[I^\psi_X/G] \cong [G \times^{Z_g} X^g /G]$. 
By \eqref{eq: moritadefinition} we have an induced Morita isomorphism $\mu^g: [X^g/Z_g] \to [I^\psi_X/G] $
\end{exm}

\subsection{Virtual structure sheaf on the inertia scheme} \label{sec:AS-Virtual sheaf}
Given an equivariant perfect obstruction theory on a $G$-scheme $X$ with a proper $G$-action,
the fixed point locus $X^g$ of an element $g \in G$ carries an associated $g$-fixed perfect obstruction theory,
which is equivariant with respect to the action of the centralizer subgroup $Z_g$ of $g$,
and this is compatible with the given perfect obstruction theory on $X$.
For the case of $\C^*$-actions, this was observed by Graber-Pandharipande in \cite{GP99} and we note
in the following lemma that their proof also works for other groups.
And we use this to induce a compatible $G$-equivariant perfect obstruction theory 
on the inertia scheme $I_X$ under the Morita equivalence. 

\begin{lem} \cite[Proposition 1]{GP99}\label{lem:GPInertia}
Let $X \in \Sch_{\C}^G$ such that the $G$-action is proper. Suppose that there exists a $G$-equivariant closed immersion $X \inj M$ for some $M \in \Sm_\C^G$. 
Let $E^\bullet \to \LL_{X/\C}$ be a $G$-equivariant perfect obstruction theory which admits a global resolution. Then we have a canonical induced $G$-equivariant perfect obstruction theory on $I_X$. 
\end{lem}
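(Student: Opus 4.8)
The plan is to reduce the construction on the whole inertia scheme to a fixed-locus construction on each component, following the two-stage strategy suggested by the surrounding results. Recall from \exref{exm:Morita exm} that the inertia scheme decomposes $G$-equivariantly as $I_X = \coprod_{\psi \in \Sigma^G_X} I^\psi_X$, and that for a chosen $g \in \psi$ the Morita map gives an isomorphism $[X^g/Z_g] \xrightarrow{\simeq} [I^\psi_X/G]$ with $I^\psi_X \cong G \times^{Z_g} X^g$. Thus it suffices to produce, for each $g$ with $X^g \neq \emptyset$, a $Z_g$-equivariant perfect obstruction theory on the fixed scheme $X^g$; then \propref{prop:Moritainducedperfectobstructiontheory}, applied with $H = Z_g$ and $W = X^g$, transports it to a $G$-equivariant perfect obstruction theory on $I^\psi_X$, and assembling over $\psi \in \Sigma^G_X$ yields the desired theory on $I_X$.

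To construct the $Z_g$-equivariant obstruction theory on $X^g$, I would follow the argument of \cite[Proposition 1]{GP99}, which is phrased there for $\C^*$-actions but uses only that the acting element has finite order and acts trivially on the fixed locus, a hypothesis guaranteed here by properness of the $G$-action. Fix a global resolution $E^\bullet \simeq [E^{-1} \to E^0]$ by $G$-equivariant (hence $Z_g$-equivariant) locally free sheaves. Since $g$ acts trivially on $X^g$, the finite cyclic group $P = \langle g \rangle$ acts trivially on $X^g$, and as $P$ is linearly reductive over $\C$ every $Z_g$-equivariant coherent sheaf on $X^g$ splits canonically into $P$-eigensheaves. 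Taking the $P$-fixed (trivial-character) summand $(-)^{f}$ of $E^i|_{X^g}$ defines $\tilde{E}^\bullet := [(E^{-1})^{f} \to (E^0)^{f}]$, a two-term complex of $Z_g$-equivariant locally free sheaves on $X^g$. Because $g$ acts trivially on $X^g$, the complex $\LL_{X^g/\C}$ lies entirely in the fixed part, so restricting $\phi$ to $X^g$ and composing with the natural map $\LL_{X/\C}|_{X^g} \to \LL_{X^g/\C}$ coming from the closed immersion descends on fixed parts to a map $\tilde{\phi} \colon \tilde{E}^\bullet \to \LL_{X^g/\C}$.

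The crux is verifying that $\tilde{\phi}$ is a perfect obstruction theory, i.e. that $h^0(\tilde{\phi})$ is an isomorphism, $h^{-1}(\tilde{\phi})$ is surjective, and $\tilde{E}^\bullet$ has perfect amplitude in $[-1,0]$; the last condition is immediate since $\tilde{E}^\bullet$ is a two-term complex of locally free sheaves. For the cohomology conditions I would use the $G$-equivariant embedding $X \inj M$ with $M \in \Sm_\C^G$: since we are in characteristic zero the fixed locus $M^g$ is smooth, so $X^g \inj M^g$ presents $\LL_{X^g/\C}$ as $[I/I^2 \to \Omega_{M^g}|_{X^g}]$, and one checks term by term that these agree with the fixed parts of the corresponding presentation of $\LL_{X/\C}|_{X^g}$ (using the standard identification of $\Omega_{X^g}$ with the invariant part of $\Omega_X|_{X^g}$ in char $0$). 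Combined with the exactness of taking $P$-invariants, this reduces the cohomology conditions for $\tilde{\phi}$ to those already known for $\phi$, exactly as in \cite[Proposition 1]{GP99}. This fixed-locus analysis, ensuring that the moving parts contribute only to the virtual normal directions while the fixed parts reassemble into a genuine obstruction theory on $X^g$, is the main obstacle and the only place where the smooth embedding and the characteristic-zero hypothesis are essential.

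Finally, for canonicity I would note that the construction is independent of the auxiliary choices. Independence of the global resolution follows as in the non-equivariant theory, since the fixed parts of any two global resolutions are canonically quasi-isomorphic in a way compatible with the maps to $\LL_{X^g/\C}$. Independence of the representative $g \in \psi$ follows from the isomorphism $\theta_X \colon [X^g/Z_g] \xrightarrow{\simeq} [X^h/Z_h]$ of \remref{remk:AS-map-indep-of-g}, which carries the fixed-part obstruction theory on $X^g$ to that on $X^h$ and hence identifies their Morita transports on $I^\psi_X$, yielding a well-defined $G$-equivariant perfect obstruction theory on all of $I_X$.
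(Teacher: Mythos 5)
Your proof is correct and follows essentially the same route as the paper: take the $P$-fixed part of $i^{g,*}E^\bullet$ as a $Z_g$-equivariant perfect obstruction theory on $X^g$ (justified by the argument of \cite[Proposition 1]{GP99}), transport it to $I^\psi_X$ via \propref{prop:Moritainducedperfectobstructiontheory} and the Morita isomorphism of \exref{exm:Morita exm}, and assemble over $\psi \in \Sigma^G_X$ using the finite decomposition $I_X = \coprod_\psi I^\psi_X$. The only difference is one of exposition: you unwind the fixed-locus verification from \cite[Proposition 1]{GP99} and address independence of the choices ($g \in \psi$ and the global resolution) explicitly, where the paper simply cites that proof and asserts canonicity.
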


\begin{proof}
Let notations be as in Section \ref{sec:Atiyah-Segal-map}. Let $g \in \psi$ such that $X^g \neq \emptyset$. 
The first step is to construct a canonical $Z_g$-equivariant perfect obstruction theory on $X^g$. Let $P = <g>$ 
denote the finite cyclic subgroup of $G$ generated by $g$. 
Now as $E^\bullet \to \LL_{X/\C}$ is a $G$-equivariant perfect obstruction theory on $X$, it can also be viewed, by restriction of groups, as 
$Z_g$-equivariant and $P$-equivariant perfect obstruction theories on $X$. 
Let $i^g: X^g \inj X$ denote the $Z_g$-equivariant closed immersion. 
It follows from the proof of \cite[Proposition 1]{GP99}, that $F_g^\bullet := (i^{g,*} E^\bullet)^{\mathrm{fix}} \to \LL_{X^g/\C}$ 
defines a $Z_g$-equivariant perfect obstruction theory on $X^g$, 
where $(-)^{\mathrm{fix}}$ denotes the invariant (or fixed) part under the eigensheaf decomposition on $X^g$ with respect to the $P$-action.  
Recall from Example \ref{exm:Morita exm} that $I^\psi_X \simeq G \times^{Z_g} X^g$ as $G$-schemes, 
therefore it follows from \propref{prop:Moritainducedperfectobstructiontheory} that 
$F^\bullet_g \to \LL_{X^g/\C}$ induces a $G$-equivariant perfect obstruction theory on 
$I^\psi_X$, which we denote by $\tilde{E}^\bullet_\psi$. 
As $I_X $ is a finite disjoint union of $I^\psi_X$ it follows that $\tilde{E}^\bullet := \oplus_{\psi\in \Sigma^G_X} \tilde{E}^\bullet_\psi \to \LL_{I_X/\C}$ is 
a $G$-equivariant perfect obstruction theory on $I_X$ .
\end{proof}

For a scheme with a $\C^*$-action, the $K$-theoretic virtual localization formula was proved by Qu in \cite[Theorem 3.3]{QU}. 
For a scheme $X$ with $G$-action and $g \in G$ such that $X^g \neq \emptyset$, 
one can consider the analogous virtual localization statement with respect to the $g$-fixed locus $X^g$
if we restrict to the $Z_g$-action. 
Let us briefly recall the set-up as in {\sl loc. cit}.
Given a $G$-equivariant perfect obstruction theory $E^\bullet$ on $X$,
the virtual conormal bundle to the $Z_g$-equivariant closed immersion $i^g: X^g \inj X$ is defined as
$N_{[i^g, E^\bullet]}^{\vir^\vee} := (i^{g,*}E^\bullet)^{\mathrm{mov}}$, where as discussed before 
under the eigensheaf decomposition on $X^g$ with respect to the $P$-action, we have
a decomposition 
$$ i^{g,*}E^\bullet = (i^{g,*}E^\bullet)^{\mathrm{fix}} \oplus (i^{g,*}E^\bullet)^{\mathrm{mov}}$$
into fixed and moving parts.
As we assume that $E^\bullet$ admits a global resolution, in particular this implies that $N_{[i^g, E^\bullet]}^{\vir^\vee}$ has a global resolution.
Also note that the map of $R(Z_g)$-modules given by 
$$\cap ~\Lambda_{-1}([N_{[i^g, E^\bullet]}^{\vir^\vee}]): \GG_0(Z_g,X^g) \to \GG_0(Z_g,X^g)$$
is invertible after localizing at $\fm_g$. This would essentially follow from the proof of \cite[Theorem 3.3(b)]{EG2}
by using the fact that $N_{[i^g, E^\bullet]}^{\vir^\vee}$ is a complex of sheaves, such that the
characters in their eigensheaf decompositions with respect to the $P$-action are all non-trivial.

\begin{lem} \label{lem:vir-localization}
	With notations as above, under the proper pushforward $i^g_*: \GG_0(Z_g, X^g)_{\fm_g} \to \GG_0(Z_g, X)_{\fm_g}$:
	\begin{equation}
	i^g_* \left(\frac{\sO^\vir_{[Z_g, X^g,F_g^\bullet]}}{\Lambda_{-1}([N_{[i^g, E^\bullet]}^{\vir^\vee}])}\right) = \sO^\vir_{[Z_g, X, E^\bullet]}  
	\end{equation}  
	in $\GG_0(Z_g, X)_{\fm_g}$.
\end{lem}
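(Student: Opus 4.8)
The plan is to adapt Qu's $K$-theoretic virtual localization theorem \cite[Theorem 3.3]{QU}, which treats $\C^*$-actions, to the present situation of the finite cyclic group $P = \langle g \rangle$ acting with $X^g$ as its fixed locus, carried out $Z_g$-equivariantly and after localization at $\fm_g$. Two of the needed ingredients are already in place: by the concentration theorem \cite[Theorem 5.1]{EG4} the pushforward $i^g_*: \GG_0(Z_g, X^g)_{\fm_g} \to \GG_0(Z_g, X)_{\fm_g}$ is an isomorphism, and as recorded in the discussion preceding the lemma the class $\Lambda_{-1}([N_{[i^g,E^\bullet]}^{\vir^\vee}])$ is invertible after localizing at $\fm_g$, since the $P$-characters appearing in the eigensheaf decomposition of the moving part are all nontrivial. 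It therefore suffices to identify $(i^g_*)^{-1}(\sO^\vir_{[Z_g, X, E^\bullet]})$ with the quotient on the left-hand side, equivalently to verify the displayed equality after applying $i^g_*$.

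First I would fix a global resolution $E^\bullet = [E^{-1} \to E^0]$ by $Z_g$-equivariant locally free sheaves and exploit the embedding $X \inj M$ into the smooth $G$-scheme $M$. Since $P$ is finite and we work over $\C$, it is linearly reductive, so $M^g$ is smooth, the closed immersion $M^g \inj M$ is regular, and restriction along $i^g$ respects the isotypic decomposition exactly. Pulling back to $X^g$ and splitting each $i^{g,*}E^i$ into its $P$-fixed and moving parts yields, on the fixed side, the obstruction theory $F_g^\bullet \to \LL_{X^g/\C}$ of \lemref{lem:GPInertia} (following \cite[Proposition 1]{GP99}) and, on the moving side, the virtual conormal bundle $N_{[i^g,E^\bullet]}^{\vir^\vee}$.

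The core of the argument is the comparison of intrinsic normal cones. Following Qu, I would realize both virtual structure sheaves through the associated bundle stacks $h^1/h^0(E^{\bullet\vee})$ and $h^1/h^0(F_g^{\bullet\vee})$, and show via the deformation to the normal cone of $X^g$ inside $X$ that the $P$-fixed part of $\mathfrak{C}_{X}$ restricted to $X^g$ is exactly $\mathfrak{C}_{X^g}$, while the moving normal directions are controlled precisely by $N_{[i^g,E^\bullet]}^{\vir^\vee}$. The $K$-theoretic self-intersection (excess-intersection) formula then converts this comparison of cones into the factor $\Lambda_{-1}([N_{[i^g,E^\bullet]}^{\vir^\vee}])$, producing the stated identity after applying $i^g_*$ and invoking the invertibility noted above.

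I expect the main obstacle to be precisely this compatibility of the intrinsic normal cones under the fixed/moving splitting, which is the geometric heart of Qu's proof. One must verify that taking $P$-invariants commutes with forming the intrinsic normal cone and with the deformation to the normal cone, so that the fixed part of $\mathfrak{C}_X$ is identified with $\mathfrak{C}_{X^g}$ and the moving part contributes only through $N_{[i^g,E^\bullet]}^{\vir^\vee}$. Over $\C$ the exactness of the isotypic decomposition for the linearly reductive group $P$, together with the smoothness of $M^g$, makes this go through essentially as in the $\C^*$-case; but tracking the decomposition carefully through the cone construction, and through the $Z_g$-equivariant $K$-groups localized at $\fm_g$, is the step that requires the most care.
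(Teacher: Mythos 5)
Your proposal follows exactly the paper's route: the paper's own proof simply observes that $i^g_*$ is an isomorphism after localization at $\fm_g$ by the concentration theorem \cite[Theorem 5.1]{EG4} and then declares the argument identical to Qu's proof of \cite[Theorem 3.3]{QU}, which is precisely the adaptation you outline. Your additional discussion of the fixed/moving splitting, the comparison of intrinsic normal cones, and the excess-intersection formula is a faithful sketch of the internals of Qu's argument that the paper leaves as a citation.
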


\begin{proof}
The proof  is identical to the proof of \cite[Theorem 3.3]{QU} once we note that 
$(i^g)_*: \GG_0(Z_g, X^g)_{\fm_g} \to \GG_0(Z_g, X)_{\fm_g}$ is an isomorphism by \cite[Theorem 5.1]{EG4}. 
\end{proof}

The following lemma shows that the equivariant perfect obstruction theory
induced by restriction of groups produces compatible virtual structure sheaves
under the restriction functor on $K$-theories.

\begin{lem} \label{lem:restriction-vir-sh}
Let $H \inj G$ be a closed subgroup and let $W \in \Sch_k^G$. Let  $E^\bullet \to \LL_{W/k}$ be a 
$G$-equivariant perfect obstruction theory on $W$. 
Then $\mathrm{res}^G_H(\sO^{\vir}_{[G, W, E^\bullet]}) = \sO^{\vir}_{[H, W, E^\bullet]}$ in $\GG_0(H,W)$.
\end{lem}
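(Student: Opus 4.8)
The plan is to realize the restriction map $\res^G_H$ as the flat pullback along a smooth morphism of quotient stacks, and then to invoke the compatibility of the virtual structure sheaf with smooth base change, exactly in the style of the proofs of \propref{prop: pullbackvirtualclass} and \lemref{lem:equivalenceunderinduced}. Set $\sX_G := [W/G]$ and $\sX_H := [W/H]$, and let $\rho: \sX_H \to \sX_G$ be the morphism induced by the inclusion $H \inj G$. There is a cartesian square
\[
\xymatrix{
\sX_H \ar[r]^{\rho} \ar[d] & \sX_G \ar[d] \\
\BH \ar[r]^{b} & \BG
}
\]
in which $b$ is the representable morphism induced by $H \inj G$; its fibre over the atlas $\Spec k \to \BG$ is the smooth scheme $G/H$, so $b$ is smooth and flat, and by base change $\rho$ is smooth of relative dimension $\dim(G)-\dim(H)$ with the square Tor-independent. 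Under the equivalences $\Coh^G(W) \simeq \Coh(\sX_G)$ and $\Coh^H(W) \simeq \Coh(\sX_H)$, restriction of groups corresponds to $\rho^*$, so on $K$-theory $\res^G_H$ is identified with $\rho^*: \GG_0(\sX_G) \to \GG_0(\sX_H)$. Recall also (\defref{defn:eq. perfectobstructiontheory}) that an equivariant perfect obstruction theory on $W$ as a $G$-scheme is the same datum as a relative one on $\sX_G$ over $\BG$, and likewise for $H$ over $\BH$.

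First I would check that the $H$-side obstruction theory is literally the $\rho$-pullback of the $G$-side one: by flat base change for the cotangent complex along the Tor-independent square, $\LL_{\sX_H/\BH} \simeq \rho^*\LL_{\sX_G/\BG}$, so $\rho^*E^\bullet \to \rho^*\LL_{\sX_G/\BG} \simeq \LL_{\sX_H/\BH}$ is precisely the obstruction theory computing $\sO^\vir_{[H,W,E^\bullet]}$. Hence $h^1/h^0((\rho^*E^\bullet)^{\vee}) \simeq \rho^* h^1/h^0(E^{\bullet \vee})$ as vector bundle stacks over $\sX_H$, and $\rho^*$ of a global resolution is again one, so \propref{prop:vbundlestack} applies on both sides and both virtual sheaves are defined via the respective projections $\pi$.

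The geometric heart of the argument is the base-change isomorphism for the intrinsic normal cone: since the square is cartesian with $b$ flat, one has $\mathfrak{N}_{\sX_H/\BH} \simeq \rho^*\mathfrak{N}_{\sX_G/\BG}$ and, compatibly, $\mathfrak{C}_{\sX_H/\BH} \simeq \rho^*\mathfrak{C}_{\sX_G/\BG}$, as in the base-change statements of \cite[Proposition 3.14]{BF} and the relative version in \cite{Manolache}. Thus the closed embedding $\mathfrak{C}_{\sX_H/\BH} \inj h^1/h^0((\rho^*E^\bullet)^{\vee})$ is the pullback along $\rho$ of $\mathfrak{C}_{\sX_G/\BG} \inj h^1/h^0(E^{\bullet \vee})$, whence $\rho^*[\sO_{\mathfrak{C}_{\sX_G/\BG}}] = [\sO_{\mathfrak{C}_{\sX_H/\BH}}]$ by flatness. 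Using the functoriality of flat pullback on $\GG$-theory through the commuting square of projections $\pi$ (which gives $\pi_H^*\rho^* = \tilde{\rho}^*\pi_G^*$ for the induced flat map $\tilde{\rho}$ of vector bundle stacks) and the injectivity of $\pi_H^*$, one concludes $\rho^*\bigl(\sO^\vir_{[\sX_G, E^\bullet]}\bigr) = \sO^\vir_{[\sX_H, E^\bullet]}$, which is the assertion after reading it through $\rho^* = \res^G_H$. The main obstacle is exactly this cone base-change identity $\mathfrak{C}_{\sX_H/\BH} \simeq \rho^*\mathfrak{C}_{\sX_G/\BG}$ under the smooth change of base $\BH \to \BG$; the remaining steps are the flat-pullback bookkeeping already used in \propref{prop: pullbackvirtualclass} and \lemref{lem:equivalenceunderinduced}. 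As an alternative that avoids the explicit cone computation, I would instead route the whole argument through the virtual pullback formalism of \cite[Proposition 2.11]{QU}, viewing $\rho$ as a smooth morphism carrying its trivial relative obstruction theory.
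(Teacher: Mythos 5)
Your proposal is correct, but its main line of argument is genuinely different from the paper's. The paper's proof is, in essence, the alternative you only sketch in your last sentence: after setting up the same cartesian square and identifying $\res^G_H$ with $\rho^*$ (citing \cite[Section 2.1]{AOV}), it expresses both sides as virtual pullbacks of the point in the sense of \cite[Definition 2.2]{QU}, namely $\sO^\vir_{[G,W,E^\bullet]} = p^!_{E^\bullet}(\sO_{\BG})$ and $\sO^\vir_{[H,W,E^\bullet]} = q^!_{E^\bullet}(\sO_{\BH})$, and then concludes in one step from the base-change property of virtual pullbacks along the flat morphism $\BH \to \BG$ (\cite[Remark 2.3, Proposition 2.5]{QU}) --- note that this is the base-change statement, not the functoriality statement \cite[Proposition 2.11]{QU} that you name. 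Your primary route instead unwinds the definition of the virtual structure sheaf: flat base change for the relative intrinsic normal cone, $\mathfrak{C}_{\sX_H/\BH} \simeq \rho^*\mathfrak{C}_{\sX_G/\BG}$, followed by flat-pullback bookkeeping through the vector bundle stacks and the invertibility of $\pi^*$ from \propref{prop:vbundlestack}. This is sound: since $b$ is flat the square is automatically Tor-independent, and the normal cone (hence the relative intrinsic normal cone, via its local description $[C_{X/M}/T_{M/Y}|_X]$) commutes with flat base change; in effect you re-prove Qu's base-change property in the special case where it is applied to structure sheaves. What each approach buys: the paper's is shorter and stays inside the virtual-pullback formalism already used in \propref{prop: pullbackvirtualclass}, while yours is self-contained at the level of \defref{defn:virtualstructuresheaf} and makes the geometry visible, at the cost of having to supply the cone base-change statement. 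One caveat on sourcing: \cite[Proposition 3.14]{BF} is the statement the paper invokes for the triangle-of-obstruction-theories situation in \lemref{lem:equivalenceunderinduced}, not a flat base-change statement; the relative base-change fact you need is the one to extract from \cite{Manolache}.
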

\begin{proof}
As  $H \inj G$ is a  closed subgroup of a smooth affine group scheme $G$, 
the induced morphism $f: \BH \to \BG$ is smooth (since fppf locally on $\BG$, $\BH \times_{\BG} \Spec k = G/H$ is smooth).
Consider the following Cartesian square:
$$
\xymatrix{
	[W/H] \ar[r]^{g} \ar[d]_{q} & [W/G] \ar[d]^{p} \\
	\BH \ar[r]^{f} & \BG.}
$$
Let us denote by $E^\bullet \to \LL_p$ the relative perfect obstruction theory (corresponding to the $G$-equivariant perfect obstruction theory on $W$),
it induces a relative perfect obstruction $g^*E^{\bullet} \to \LL_q$  (the $H$-equivariant perfect obstruction theory on $X$ obtained
by restricting to the $H$-action).
The restriction functor $\QCoh^G(W) \to \QCoh^H(W)$ is given by $g^*: \QCoh([W/G]) \to \QCoh([W/H])$ under the
identification of equivariant quasi-coherent sheaves and quasi-coherent sheaves on quotient stacks (see \cite[Section 2.1]{AOV}).
Under this identification,
\begin{align*}
\mathrm{res}^G_H(\sO^{\vir}_{[G, W, E^\bullet]})  &= g^*(\sO^{\vir}_{[G, W, E^\bullet]}) \\
&=^1 g^*p_{E^\bullet}^! (\sO_{BG}) \\
&=^2 q_{E^\bullet}^!f^* (\sO_{BG}) \\ 
&= q_{E^\bullet}^!(\sO_{BH}) \\
&=^3 \sO^{\vir}_{[H, W, E^\bullet]},
\end{align*}
where $=^1$ and $=^3$ follow from \defref{defn:virtualstructuresheaf} and \cite[Definition 2.2]{QU}
and $=^2$ follows from the basic properties of 
virtual pullbacks \cite[Remark 2.3, Proposition 2.5]{QU}.
\end{proof}






We now prove a virtually smooth {\bl analogue} of the Edidin-Graham non-abelian localization theorem \cite[Theorem 5.1]{EG2}.
We show that under the Atiyah-Segal morphism the virtual structure sheaf on $X$ maps to the virtual structure sheaf of
the inertia scheme $I_X$ upto an automorphism induced by the virtual conormal bundle appearing in Lemma \ref{lem:vir-localization}. 
As we mentioned before the Atiyah-Segal map is independent of the choice of $g \in \psi$ (see \remref{remk:AS-map-indep-of-g}),
we prove in the following lemma that the image of the class of the virtual cornormal bundle in $I_{\psi}$ is also independent of 
the choice of $g \in \psi$.

\begin{lem}
Given a $G$-equivariant perfect obstruction theory $E^\bullet$ on $X$,
and $g, h \in \psi$,
$\theta_X^*(\Lambda_{-1}([N_{[i^h, E^\bullet]}^{\vir^\vee}])) = \Lambda_{-1}([N_{[i^g, E^\bullet]}^{\vir^\vee}])$
in $\KK_0(Z_g,X^g)$, where $\theta_X$ is defined in \eqref{diag:defnofphi_*0}. 
\end{lem}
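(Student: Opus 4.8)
The plan is to realize the stack isomorphism $\theta_X$ geometrically by left translation by $l$ and to transport the entire $g$-fixed construction to the $h$-fixed one. Write $\lambda\colon X\to X$ for the automorphism $x\mapsto lx$ induced by $l\in G$. For $x\in X^g$ one has $h\cdot(lx)=lgl^{-1}\cdot lx=lg\cdot x=lx$, so $\lambda$ restricts to $u\colon X^g\to X^h$ and satisfies $\lambda\circ i^g=i^h\circ u$. Moreover $\lambda(z\cdot x)=lzx=(lzl^{-1})\cdot(lx)=\phi(z)\cdot\lambda(x)$ for $z\in Z_g$, so $\lambda$ is equivariant for $\phi\colon Z_g\xrightarrow{\sim}Z_h$; thus $\lambda$ is precisely the scheme-level incarnation of the composite defining $\theta_X$ in \eqref{diag:defnofphi_*0}.

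First I would use the $G$-equivariance of $E^\bullet$ to identify the two restrictions. The equivariant structure provides, for each $l\in G$, a canonical isomorphism $c_l\colon\lambda^*E^\bullet\xrightarrow{\sim}E^\bullet$, compatible with restriction of the group action. Combining this with $\lambda\circ i^g=i^h\circ u$ gives
$$\theta_X^*(i^{h,*}E^\bullet)=u^*i^{h,*}E^\bullet=i^{g,*}\lambda^*E^\bullet\xrightarrow[\ \sim\ ]{\,i^{g,*}c_l\,}i^{g,*}E^\bullet$$
as $Z_g$-equivariant perfect complexes on $X^g$, where $Z_g$ acts on the left-hand terms through $\phi$.

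The step I expect to be the main obstacle is to check that this identification respects the eigensheaf decompositions with respect to the central finite subgroups $\langle g\rangle$ and $\langle h\rangle$. Recall that $g\in Z_g$ acts on $i^{g,*}E^\bullet$ and $h\in Z_h$ on $i^{h,*}E^\bullet$ through the automorphisms coming from the equivariant structure, and that $\phi$ carries $g$ to $h$, hence identifies $\widehat{\langle h\rangle}$ with $\widehat{\langle g\rangle}$ and the trivial character with the trivial character. The relation $hl=lg$ in $G$, together with the cocycle condition satisfied by the isomorphisms $c_{(-)}$, forces $i^{g,*}c_l$ to intertwine the action of $g$ with the $\phi$-transported action of $h$. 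Consequently the $\chi$-summand of $i^{h,*}E^\bullet$ is sent to the $(\chi\circ\phi)$-summand of $i^{g,*}E^\bullet$; in particular fixed parts correspond to fixed parts and moving parts to moving parts, so that
$$\theta_X^*\big([N_{[i^h, E^\bullet]}^{\vir^\vee}]\big)=\theta_X^*\big([(i^{h,*}E^\bullet)^{\mathrm{mov}}]\big)=[(i^{g,*}E^\bullet)^{\mathrm{mov}}]=[N_{[i^g, E^\bullet]}^{\vir^\vee}]$$
in $\KK_0(Z_g,X^g)$.

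Finally, since $\theta_X^*$ is a homomorphism of equivariant $K$-theory rings and $\Lambda_{-1}$ is assembled functorially from the exterior-power operations, $\theta_X^*$ commutes with $\Lambda_{-1}$. Applying $\Lambda_{-1}$ to the last displayed equality therefore gives $\theta_X^*(\Lambda_{-1}([N_{[i^h, E^\bullet]}^{\vir^\vee}]))=\Lambda_{-1}([N_{[i^g, E^\bullet]}^{\vir^\vee}])$, as claimed.
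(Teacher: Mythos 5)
Your proposal is correct and follows the same route as the paper: the paper's (very terse) proof consists precisely of the assertion that the translation map $u: X^g \to X^h$ matches the fixed and moving parts under the $P_h$-action with those under the $P_g$-action, which is exactly the fact you establish in detail via the equivariant structure isomorphism $c_l$, the relation $hl = lg$, and the cocycle condition. Your write-up simply supplies the verification that the paper leaves implicit, so there is nothing to flag.
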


\begin{proof}
Let $P_g$ and $P_h$ denote the cyclic subgroups of $G$ generated by $g$ and $h$, respectively.
The proof follows from the fact {\bl that the map $u: X^g \to X^h $ (see \remref{remk:AS-map-indep-of-g})} takes the fixed and moving parts under the $P_h$-action to the
fixed and moving parts under the $P_g$-action, respectively.
\end{proof}
In view of the above lemma and \eqref{eqn:Indep-0}, we denote $\mu^g_* (t_{g^{-1}} (\Lambda_{-1}([N_{[i^g, E^\bullet]}^{\vir^\vee}])))$,
the image of the class of the virtual cornormal bundle in $\KK_0(G, I^{\psi}_X)$,
by $\Lambda_{-1}([N_{[i^\psi, E^\bullet]}^{\vir^\vee}])$. Moreover, since $t_{g^{-1}}$ and $\mu^g_*$ are isomorphisms,
$\cap ~\Lambda_{-1}([N_{[i^\psi, E^\bullet]}^{\vir^\vee}]): \GG_0(G,I^\psi_X) \to \GG_0(G,I^\psi_X)$
is again an invertible map after localizing at $\fm_1$.
\begin{thm} \label{thm:Virtual-sh-AS}
Let $X \in \Sch^G_\C$ such that $G$ acts properly on $X$.  Suppose that there exists a $G$-equivariant closed immersion $X \inj M$ for some $M \in \Sm_\C^G$. Let $E^\bullet \to \LL_{X/k}$ be a $G$-equivariant perfect obstruction theory  which admits a global resolution and let 
$\tilde{E}^\bullet \to \LL_{I_X/k}$ be the induced $G$-equivariant perfect obstruction theory  on $I_X$ (see \lemref{lem:GPInertia}). 
Then under the Atiyah-Segal isomorphism
{\bl {$\vartheta^G_X: \GG_0(G,X) \to \GG_0(G,I_X)_{\fm_1}$}}, we have the following:
$$ \vartheta^G_X(\sO^\vir_{[G, X, E^\bullet]}) = 
\underset{\psi \in \Sigma^G_X} \sum \frac{\sO^\vir_{[G,I^\psi_X, \tilde{E}^\bullet]}}{\Lambda_{-1}([N_{[i^\psi, E^\bullet]}^{\vir^\vee}])}.
$$
\end{thm}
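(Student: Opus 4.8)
The plan is to decompose $\vartheta^G_X$ according to \eqref{eqn:Atiyahsegal-2} and trace the virtual structure sheaf through the three isomorphisms $(i^g_!)^{-1}$, $t_g^{-1}$, $\mu^g_*$ that compose to form $\vartheta^{\psi}_X$ in \eqref{eqn:Atiyahsegal}. Since the first map in \eqref{eqn:Atiyahsegal-2} is localization and the last only reassembles the summands along $I_X = \coprod_{\psi} I^{\psi}_X$, it suffices to fix a conjugacy class $\psi \in \Sigma^G_X$ with representative $g$ and establish the single-class identity
\begin{equation}
\vartheta^{\psi}_X(\sO^\vir_{[G, X, E^\bullet]}) = \frac{\sO^\vir_{[G,I^\psi_X, \tilde{E}^\bullet]}}{\Lambda_{-1}([N_{[i^\psi, E^\bullet]}^{\vir^\vee}])}
\end{equation}
in $\GG_0(G,I^\psi_X)_{\fm_1}$; summing over $\psi \in \Sigma^G_X$ then yields the theorem.

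First I would handle $(i^g_!)^{-1} = (i^g_*)^{-1} \circ \res^G_{Z_g}$. By \lemref{lem:restriction-vir-sh} the restriction sends $\sO^\vir_{[G, X, E^\bullet]}$ to $\sO^\vir_{[Z_g, X, E^\bullet]}$, and then the virtual localization identity of \lemref{lem:vir-localization} inverts the pushforward to give
\begin{equation}
(i^g_!)^{-1}(\sO^\vir_{[G, X, E^\bullet]}) = \frac{\sO^\vir_{[Z_g, X^g, F_g^\bullet]}}{\Lambda_{-1}([N_{[i^g, E^\bullet]}^{\vir^\vee}])}
\end{equation}
in $\GG_0(Z_g, X^g)_{\fm_g}$, where $F_g^\bullet = (i^{g,*}E^\bullet)^{\mathrm{fix}}$ is the fixed obstruction theory from \lemref{lem:GPInertia}. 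Next I would apply $t_g^{-1}$. The key point is that $\sO^\vir_{[Z_g, X^g, F_g^\bullet]}$ carries trivial character under the $P = \langle g \rangle$-action: since $g$ acts trivially on the scheme $X^g$, it acts trivially on the intrinsic normal cone $\mathfrak{C}_{X^g/\C}$ and on the fixed bundle stack $h^1/h^0(F_g^{\bullet\vee})$, so the defining class $[\sO_{\mathfrak{C}_{X^g/\C}}]$ and hence $\sO^\vir_{[Z_g, X^g, F_g^\bullet]}$ are fixed by $t_g^{-1}$. As $t_g^{-1}$ respects the multiplicative eigensheaf decomposition and is therefore compatible with the $\KK_0$-module structure, it commutes with the formation of $\Lambda_{-1}$-inverses, giving
\begin{equation}
t_g^{-1}\!\left(\frac{\sO^\vir_{[Z_g, X^g, F_g^\bullet]}}{\Lambda_{-1}([N_{[i^g, E^\bullet]}^{\vir^\vee}])}\right) = \frac{\sO^\vir_{[Z_g, X^g, F_g^\bullet]}}{t_g^{-1}(\Lambda_{-1}([N_{[i^g, E^\bullet]}^{\vir^\vee}]))}
\end{equation}
in $\GG_0(Z_g, X^g)_{\fm_1}$.

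Finally I would apply the Morita isomorphism $\mu^g_*$. This is induced by an equivalence of quotient stacks, so it is multiplicative and passes through the quotient; by \propref{prop:Moritainducedperfectobstructiontheory} it identifies $\mu^g_*(\sO^\vir_{[Z_g, X^g, F_g^\bullet]})$ with $\sO^\vir_{[G,I^\psi_X, \tilde{E}^\bullet]}$, while the definition $\Lambda_{-1}([N_{[i^\psi, E^\bullet]}^{\vir^\vee}]) := \mu^g_*(t_g^{-1}(\Lambda_{-1}([N_{[i^g, E^\bullet]}^{\vir^\vee}])))$ introduced just before the theorem handles the denominator, completing the single-class identity. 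The main obstacle is precisely the twisting step: I must verify carefully that the virtual structure sheaf of the fixed locus lies in the trivial $P$-eigenspace, so that $t_g^{-1}$ leaves the numerator untouched, and that both $t_g^{-1}$ and $\mu^g_*$ are compatible with the $\KK_0$-module action so that they factor through division by $\Lambda_{-1}([N^{\vir^\vee}])$. The multiplicativity of $\mu^g_*$ is automatic, but the triviality of the $P$-character on $\sO^\vir_{[Z_g, X^g, F_g^\bullet]}$ is where the fixed/moving decomposition genuinely enters, and it is exactly what makes the numerator survive untwisted while the denominator, assembled from the moving part, is the one that is modified.
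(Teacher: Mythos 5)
Your proposal is correct and follows essentially the same route as the paper's proof: decompose along conjugacy classes, apply \lemref{lem:restriction-vir-sh} and \lemref{lem:vir-localization} to compute $(i^g_!)^{-1}$, use the $\langle g\rangle$-invariance of $F_g^\bullet$ to see that the twist fixes the numerator, and finish with \propref{prop:Moritainducedperfectobstructiontheory} together with the definition of $\Lambda_{-1}([N_{[i^\psi, E^\bullet]}^{\vir^\vee}])$. Your explicit justification of why $t_g^{-1}$ leaves $\sO^\vir_{[Z_g, X^g, F_g^\bullet]}$ untouched is a welcome elaboration of the step the paper compresses into ``$F_g^{\bullet}$ is $\langle g\rangle$-invariant.''
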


\begin{proof}
Fix an element $g \in \psi$ for each conjugacy class $\psi \in \Sigma^G_X$, under the notations of perfect obstruction theories given in 
\lemref{lem:GPInertia}, we have:
\begingroup
\allowdisplaybreaks
\begin{align*}
\vartheta^G_X(\sO^\vir_{[G, X, E^\bullet]})  &=^1 \underset{\psi \in \Sigma^G_X} \oplus \vartheta^{\psi}_X (\sO^\vir_{[G, X, E^\bullet]}) \\
																	 & =^2 \sum_{\psi} \mu^g_* (t_g^{-1} ({i^g_*}^{-1} (\mathrm{res}^G_{Z_g} (\sO^\vir_{[G, X, E^\bullet]})))) \\
																	 & =^3 \sum_{\psi} \mu^g_* (t_g^{-1} ({i^g_*}^{-1} (\sO^\vir_{[Z_g, X, E^\bullet]})))\\
																	 & =^4 \sum_{\psi} \mu^g_* \left( t_g^{-1} \left(
																	 \frac{\sO^\vir_{[Z_g, X^g,F_g^\bullet]}}{\Lambda_{-1}([N_{[i^g, E^\bullet]}^{\vir^\vee}])}\right) \right)\\
																	 & =^5 \sum_{\psi} \frac{\mu^g_*(\sO^\vir_{[Z_g, X^g,F_g^\bullet]})}
																	 {\mu^g_*(t_{g^{-1}}(\Lambda_{-1}([N_{[i^g, E^\bullet]}^{\vir^\vee}])))}\\
																	 & =^6 \sum_{\psi} \frac{\sO^\vir_{[G,I^\psi_X, \tilde{E}^\bullet]}} 
																	 {\mu^g_*(t_{g^{-1}}(\Lambda_{-1}([N_{[i^g, E^\bullet]}^{\vir^\vee}])))}\\
																	 & = \sum_{\psi} \frac{\sO^\vir_{[G,I^\psi_X, \tilde{E}^\bullet]}}{\Lambda_{-1}([N_{[i^\psi, E^\bullet]}^{\vir^\vee}])}
																	 ~,															
\end{align*}
\endgroup
where $=^1$, $=^2$ follow from Definitions \ref{defn:Atiyah-Segal-map}, \eqref{eqn:Atiyahsegal} and $=^3$, $=^4$  follow
from Lemmas \ref{lem:restriction-vir-sh}, \ref{lem:vir-localization}, respectively.
$=^5$ is a consequence of the fact that $F_g^{\bullet}$ is $<g>$-invariant and $=^6$ is given by \propref{prop:Moritainducedperfectobstructiontheory}.
\end{proof}

Recall from \cite[Definition 10.8]{KS20} that the Riemann-Roch isomorphism to the inertia scheme is defined as the composition of the Atiyah-Segal map and the
Riemann-Roch transformation of the inertia scheme:
$$I\tau_X := \tau^G_{I_X} \circ \vartheta^G_X : \GG_0(G,X) \to \bigoplus_{\psi \in \Sigma_G^X} \CH_*^G(I_X).$$
We have the following {\bl analogue} of \thmref{thm:main} relating the 
virtual structure sheaf of a $G$-scheme with a given $G$-equivariant perfect obstruction theory  
and the virtual {\bl fundamental }class of the inertia scheme corresponding to the naturally induced
$G$-equivariant perfect obstruction theory,
under the Riemann-Roch transformation to the inertia.

\begin{cor} \label{cor:Inertia-RR}
	Under the notations of Theorem \ref{thm:Virtual-sh-AS} further  suppose  assumptions in \ref{assmp} are satisfied. Then we have the following equality in $\CH_*^G(I_X)$:
$$
	I\tau^G_X (\sO^\vir_{[G, X, E^\bullet]}) = \underset{\psi \in \Sigma^G_X} \sum ch^G(\Lambda_{-1}([N_{[i^\psi, E^\bullet]}^{\vir^\vee}])^{-1}
	                                                                  Td^G(T^\vir_{[I^\psi_X, \tilde{E}^\bullet]}) \cap  [G,I_X^{\psi^\vir}, \tilde{E}^\bullet].
$$
\end{cor}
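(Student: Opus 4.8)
The plan is to combine \thmref{thm:Virtual-sh-AS} with the absolute case of \thmref{thm:main} applied componentwise to the inertia scheme. Recalling that $I\tau^G_X = \tau^G_{I_X} \circ \vartheta^G_X$, I would begin by substituting the formula of \thmref{thm:Virtual-sh-AS} for $\vartheta^G_X(\sO^\vir_{[G, X, E^\bullet]})$. Since $I_X = \coprod_{\psi \in \Sigma^G_X} I^\psi_X$ is a finite disjoint union, both $\GG_0(G, I_X)_{\fm_1}$ and $\CH_*^G(I_X)$ split as the corresponding direct sums over the $I^\psi_X$, and $\tau^G_{I_X}$ decomposes accordingly into the maps $\tau^G_{I^\psi_X}$. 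It therefore suffices to evaluate each summand
\[
\tau^G_{I^\psi_X}\left(\frac{\sO^\vir_{[G,I^\psi_X, \tilde{E}^\bullet]}}{\Lambda_{-1}([N_{[i^\psi, E^\bullet]}^{\vir^\vee}])}\right).
\]

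Next I would apply the module property of the equivariant Riemann-Roch map (\cite[Theorem 3.1(c)]{EG1}, already used in the proof of \corref{cor:EVGRR}), namely $\tau^G(\beta \cdot \alpha) = ch^G(\beta) \cap \tau^G(\alpha)$ for $\beta \in \KK_0(G, I^\psi_X)$ and $\alpha \in \GG_0(G, I^\psi_X)$. Taking $\beta = \Lambda_{-1}([N_{[i^\psi, E^\bullet]}^{\vir^\vee}])^{-1}$, which is invertible after localizing at $\fm_1$ as noted just before the statement, and $\alpha = \sO^\vir_{[G,I^\psi_X, \tilde{E}^\bullet]}$, this pulls out the factor $ch^G(\Lambda_{-1}([N_{[i^\psi, E^\bullet]}^{\vir^\vee}])^{-1})$. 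The remaining term $\tau^G_{I^\psi_X}(\sO^\vir_{[G,I^\psi_X, \tilde{E}^\bullet]})$ is then computed by \thmref{thm:main} in the absolute case $Y = \Spec \C$, which is smooth, connected and pure dimensional, giving $Td^G(T^\vir_{[I^\psi_X, \tilde{E}^\bullet]}) \cap [G, I_X^{\psi^\vir}, \tilde{E}^\bullet]$. Summing over $\psi \in \Sigma^G_X$ yields the claimed identity.

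The main point to verify is that \thmref{thm:main} genuinely applies to each $I^\psi_X$, which requires two things. First, $I^\psi_X$ must admit a $G$-equivariant closed immersion into a smooth $G$-scheme satisfying \ref{assmp}: using the isomorphism $I^\psi_X \cong G \times^{Z_g} X^g$ of \exref{exm:Morita exm} and the fact that the fixed locus $M^g$ of the finite-order element $g$ acting on the smooth scheme $M$ over $\C$ is again smooth, the immersion $X^g \inj M^g$ induces a $G$-equivariant closed immersion $I^\psi_X \inj G \times^{Z_g} M^g$ into a smooth $G$-scheme, and the condition imposed on $G$ in \ref{assmp} is inherited verbatim. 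Second, the induced obstruction theory $\tilde{E}^\bullet$ must admit a global resolution, which follows from its explicit construction in \lemref{lem:GPInertia} and \propref{prop:Moritainducedperfectobstructiontheory} together with the global resolution hypothesis on $E^\bullet$. A final technical point is that $\tau^G_{I_X}$ is here applied on the localization $\GG_0(G,I_X)_{\fm_1}$, and the module property must be checked to persist after this localization; this is exactly the framework in which $I\tau_X$ is defined in \cite{KS20}, so the extension is available.
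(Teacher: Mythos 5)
Your proposal is correct and takes essentially the same route as the paper: the paper's proof likewise combines \thmref{thm:Virtual-sh-AS} with \thmref{thm:main} applied componentwise over the decomposition $I_X = \coprod_{\psi} I^\psi_X$, using the module property $\tau^G_{I^\psi_X}(\epsilon \alpha) = ch^G(\epsilon)\, \tau^G_{I^\psi_X}(\alpha)$ from \cite[Theorem 3.1(c)]{EG1} to pull out the factor $ch^G(\Lambda_{-1}([N_{[i^\psi, E^\bullet]}^{\vir^\vee}])^{-1}$. The only difference is that you explicitly verify the hypotheses of \thmref{thm:main} for each $I^\psi_X$ (the $G$-equivariant embedding into $G \times^{Z_g} M^g$, the global resolution of $\tilde{E}^\bullet$, and compatibility with localization at $\fm_1$), points that the paper's one-line proof leaves implicit.
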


\begin{proof}
The proof follows directly from Theorems \ref{thm:Virtual-sh-AS} and \ref{thm:main}, by noting that
for any $\epsilon \in \KK_0(G,I^\psi_X)$ and 
$\alpha \in \GG_0(G, I^\psi_X)$, 
$\tau^G_{I^\psi_X} (\epsilon \alpha) = ch^G(\epsilon) \tau^G_{I^\psi_X} (\alpha)$ (cf. \cite[Theorem 3.1(c)]{EG1}).
\end{proof}

\end{document}